\def\sqr#1#2{{\vcenter{\vbox{\hrule height.#2pt
        \hbox{\vrule width.#2pt height#1pt \kern#2pt
        \vrule width.#2pt}
        \hrule height.#2pt}}}}
\def\approxleq{ \kern3pt \mbox{\raisebox{.6ex}{$<$}} \kern-8pt
  \mbox{\raisebox{-.6ex}{$\sim$}} \kern5pt}
\def\norm#1{\|#1 \|}
   \def\cT{{\cal T}}
\newlength{\len}
\newtheorem{theorem}{Theorem}[section]
\newtheorem{prop}{Proposition}[section]
\newtheorem{lemma}{Lemma}[section]
\newtheorem{corollary}{Corollary}[section]
\newtheorem{remark}{Remark}[section]
\newtheorem{assumption}{Assumption}[section]
\newtheorem{defn}{Definition}[section]
\newtheorem{example}{Example}
\def\R{\mathbb{R}} \def\E{\mathbb{E}}
\def\S{\mathbb{S}}
\def\X{\mathbb{X}}
\def\Y{\mathbb{Y}}
\def\Z{\mathbb{Z}}
\def\W{\mathbb{W}}
\begin{document}

\title{On the Asymptotic Superlinear Convergence of the Augmented Lagrangian Method for Semidefinite Programming with Multiple Solutions }

\author{Ying Cui\thanks {Department of Mathematics, National University of Singapore, 10 Lower Kent Ridge Road, Singapore ({\tt cuiying@nus.edu.sg}).}, \;Defeng Sun\thanks{Department  of  Mathematics, National University of Singapore, 10 Lower Kent Ridge Road, Singapore ({\tt matsundf@nus.edu.sg}). This research is supported in part by the Academic Research Fund under Grant R-146-000-207-112. } \  and \ Kim-Chuan Toh\thanks{Department of Mathematics, National University of Singapore, 10 Lower Kent Ridge Road, Singapore
({\tt mattohkc@nus.edu.sg}). This research is supported in part by the Ministry of Education, Singapore, Academic Research Fund under Grant R-146-000-194-112.
 }\\[10pt]
 }

\maketitle

\begin{abstract}
Solving
large scale convex semidefinite programming (SDP) problems has long been a challenging task numerically.
Fortunately, several powerful solvers including SDPNAL, SDPNAL+  and  QSDPNAL have recently been  developed to  solve
linear and convex quadratic SDP problems to high accuracy  successfully.
These solvers are based on the
augmented Lagrangian method (ALM) applied to the dual  problems with the subproblems being solved by  semismooth Newton-CG methods.
Noticeably, thanks to Rockafellar's general theory on the proximal point algorithms, the primal iteration sequence generated by the ALM enjoys an asymptotic
Q-superlinear convergence rate under a second order sufficient condition {for the primal problem}.
This second order sufficient
condition implies that the  primal problem has a
 unique solution, which can be restrictive in many applications.
For gaining more insightful interpretations on the high efficiency of these solvers,  in this paper
we conduct an asymptotic superlinear convergence analysis of the ALM for convex SDP when the  primal problem
has multiple solutions (can be unbounded).
Under a fairly mild second order growth condition,
we
prove that the primal iteration sequence generated by the ALM converges  asymptotically Q-superlinearly, while the dual feasibility and the dual objective function value converge  asymptotically R-superlinearly.
Moreover, by studying   the metric subregularity of the Karush-Kuhn-Tucker solution mapping,
we also provide sufficient conditions to guarantee the asymptotic R-superlinear convergence of the
dual iterate.
\end{abstract}

\medskip
{\small
\begin{center}
\parbox{0.95\hsize}{{\bf Keywords.}\; Semidefinite programming, augmented Lagrangian, second order growth condition, metric subregularity
}
\end{center}
}
\begin{center}
\parbox{0.95\hsize}{{\bf AMS subject classifications:}\; 90C25, 90C33, 65K05}
\end{center}

\section{Introduction}
Let $\S^n$ be the space  of 
$n\times n$ real symmetric matrices equipped with {the}
standard trace inner product $\langle \cdot,\cdot\rangle$ and its induced Frobenius
norm $\|\cdot\|$.
{We use} $\S_+^n$ to denote the cone of  $n\times n$ symmetric positive semidefinite matrices in $\S^n$. We write $X\succeq 0$ if $X\in\S^n_+$  and $X\succ 0$ if $X$ is symmetric positive definite.

Semidefinite programming (SDP) is an extremely
important and active research area in modern optimization. 
Among various SDP models, the most fundamental one is the following  standard {primal} linear SDP:
\begin{equation}\label{intro:linearsdp}
\min \bigg\{\langle C,X\rangle\; |\; \mathcal{A}X = b, \; X\in\S_+^n\bigg\},
\end{equation}
where $\mathcal{A}:\S^n\to\R^m$ is a linear {map}, 
$C\in \S^n$ and $b\in\R^m$ are given data.
 The dual of (\ref{intro:linearsdp}) is given by
\begin{equation}\label{intro:dualsdp1}
\begin{array}{ll}
\max \bigg\{ \langle b,y\rangle\;|\; \mathcal{A}^*y + S = C,\; S\in\S_+^n\bigg\},
\end{array}
\end{equation}
where $\mathcal{A}^*:\R^m\to\S^n$ is the adjoint {map} 
of $\mathcal{A}$.

The problem (\ref{intro:linearsdp}) arises frequently from the SDP relaxations of numerous NP-hard
  combinatorial optimization problems, such as frequency assignment problems~\cite{eisenblatter2002frequency},
   maximum stable set problems~\cite{grotschel1986relaxations}, quadratic assignment and binary integer quadratic problems~\cite{lovasz1991cones}, etc.
As a consequence,
much 
{effort has been put into designing algorithms} 
for solving large scale semidefinite programming (SDP) efficiently.
{It is widely recognized that
interior point methods (IPMs) such as those implemented in
\cite{Sturm99, TTT99,TTT03} are highly successful in solving small and medium sized SDPs;
see~\cite{Todd01} for a nice survey on this topic. However, IPMs are generally inefficient for solving large scale
SDPs due to their inherent poor computational scalability and expensive memory requirement. To overcome
these drawbacks, various attempts on using first order methods to solve special classes of large SDPs
have been made in recent years. These include the boundary point method~\cite{malick2009regularization},
a directly extended alternating direction method of multipliers (ADMM) \cite{wen2010alternating}, a two-easy-block-decomposition hybrid proximal extragradient method~\cite{monteiro2014first}, and
a convergent multi-block ADMM$+$ \cite{STY15}.
}
{While the first order methods just mentioned are reasonably efficient in solving some large scale SDPs, they
may become inefficient when higher accuracy solutions are required and more seriously, they can fail badly
when solving more difficult problems as demonstrated in \cite{yang2015sdpnal+}.

In contrast,
the solver SDPNAL~\cite{zhao2010newton}
developed by Zhao, Sun and Toh, which made use of second-order information,
is much more efficient in solving large SDPs to high accuracy.
This powerful solver, designed for large scale linear SDP problems of the form (\ref{intro:linearsdp}),
is based on an augmented Lagrangian method (ALM) applied to the dual problem (\ref{intro:dualsdp1})
wherein the subproblems are approximately solved by the  semismooth Newton-CG method.
Extensive numerical experiments have shown that it
is highly efficient for solving large scale SDPs with non-degenerate primal optimal solutions.
}

A more complicated  linear SDP problem is the following so-called doubly nonnegative SDP:
 \begin{equation}\label{intro:dsdp}
\min \bigg\{\langle C,X\rangle\; |\; \mathcal{A}X = b, \; X\in\S_+^n,\; X\geq 0\bigg\}.
\end{equation}
Even though it can be reformulated as a standard SDP by introducing  additional constraints $X'= X$ and $X'\geq 0$, the 
{reformulated} 
problem is  
{usually}
 degenerate and thus SDPNAL may fail to solve it efficiently. To overcome this difficulty,
  an enhanced version of SDPNAL, called SDPNAL+, was developed by Yang, Sun and Toh ~\cite{yang2015sdpnal+} recently. With a majorized semismooth Newton-CG method for {solving}
the inner subproblems in the ALM,
the new solver {can successfully compute} solutions of high accuracy  for large scale doubly nonnegative SDPs.

The dual based ALM {coupled with a semismooth Newton-CG algorithm has also} been extended to other classes of SDP problems. Jiang, Sun and Toh~\cite{jiang2013solving} have employed this approach to solve
the least squares SDP problem:
\begin{equation}\label{intro:ls}
\min \bigg\{ \frac{1}{2}\|\mathcal{F}X  - d\|^2 + \langle C,X\rangle\;|\; \mathcal{A}X = b, \; X\in\S_+^n\bigg\},
\end{equation}
where $\mathcal{F}:\mathbb{S}^n\to\mathbb{R}^{m'}$ is a linear map and $d\in\mathbb{R}^{m'}$ is a given vector.
Most recently, this idea is adopted by Li, Sun and Toh~\cite{li2015qsdpnal} for developing the solver QSDPNAL to
 deal with the following convex quadratic SDP (QSDP) with a  given self-adjoint positive semidefinite operator  $\mathcal{P}:\S^n\to\S^n$:
\begin{equation}\label{intro:qsdp}
\min \bigg\{\frac{1}{2}\langle X, \mathcal{P}X\rangle +\langle C,X\rangle\;|\; \mathcal{A}X = b, \; X\in\S_+^n\bigg\}.
\end{equation}

 It is well known that the ALM applied to the dual problem 
is equivalent to a proximal point algorithm (PPA) 
{applied}
to its primal form~\cite{rockafellar1976augmented}.
So for all of the  ALM based solvers mentioned above  for solving SDP problems,
the primal iteration is proven to converge asymptotically
superlinearly to an optimal solution
under a second order sufficient condition via~\cite[Proposition 3 and Theorem 2]{rockafellar1976augmented}.
However, this second order sufficient condition can be restrictive because it  fails  to hold when 
{the primal}
SDP problem
has multiple solutions. For better understanding {of} 
the ALM for solving  SDPs, in this paper we aim to remove this restriction by
 conducting extensive  analysis on both the second order variational properties of the positive semidefinite cone and 
  the metric subregularity of {the solution mappings of linearly constrained convex SDPs}. In particular, 
   assuming {that the  problem admits} a Karush-Kuhn-Tucker (KKT) point with a
   partial strict complementarity property,  we
   prove that the sequence $\{X^k\}$ generated by the ALM converges  asymptotically Q-superlinearly, while the dual feasibility and the dual objective function value converge  asymptotically R-superlinearly. We also study sufficient conditions for ensuring the metric subregularity of the KKT solution mapping, which is shown to guarantee the asymptotic R-superlinear convergence of the dual iteration sequence.
 

The remaining parts of this paper are organized as follows. In the next section, we introduce some definitions and preliminary results on variational analysis and maximal monotone operators. In Section 3, we conduct extensive studies on sufficient conditions for the metric subregularity of {the solution mappings of linearly constrained convex SDPs}. Section 4 is devoted to  the asymptotic superlinear
convergence analysis of the
ALM for solving SDP problems. Under the existence of a strictly feasible solution, {we also design}
new easy-to-implement stopping criteria {for} the ALM 
in this section.
We conclude our paper and make some comments in the final section.

Below we list other symbols and notation to be used in our paper.
\begin{itemize}
\item Let $\R^{m\times n}$ be the linear space of $m \times  n$ real matrices equipped with the inner product $\langle X,Y\rangle = \textup{tr}(X^TY)$ for any $X,Y\in\R^{m\times n}$. Here $\textup{tr}(\cdot)$ denotes the trace, i.e., the sum of all the diagonal entries,  of a {square} matrix. Let $\mathcal{O}^n$ be the set of $n\times n$ orthogonal matrices.  We also  use $0_n$ and $I_n$ to denote the $n\times n$ zero matrix and identity matrix, respectively. For any $X\in\mathbb{S}^n$, $\lambda_{\textup{max}}(X)$ and $\lambda_{\textup{min}}(X)$ represent  the largest and the smallest eigenvalues of $X$, respectively.
\item We use $\X$, $\Y$, $\Z$ and $\W$  to denote some finite dimensional real Euclidean spaces.
For any convex function $p:\mathbb{X}\to(-\infty, +\infty]$, we denote its effective domain as $\textup{dom}(p) := \{x\in\mathbb{X}\mid p(x) < \infty\}$ and its conjugate  as $p^*(u)  := \sup_{x\in\mathbb{X}}\{\langle x, u\rangle  - p(x)\}$, $u\in\mathbb{X}$. For any  $x\in\mathbb{X}$ and  $\rho>0$, {we define} $\mathcal{B}_{\mathbb{X}}(x, \rho):=\{y\in\mathbb{X}\mid \|y-x\|\leq \rho\}$.
For any linear map $\mathcal{A}:\mathbb{X}\to\mathbb{Y}$, we use  $\textup{Range}(\mathcal{A})$ to denote the range space of $\mathcal{A}$.
\item  Let $\alpha \subseteq \{1,...,m\}$ and $\beta \subseteq \{1,...,n\}$ be two index sets. For any $Z \in \R^{m\times n}$, we write $Z_{\alpha\beta}$ {to be} the $|\alpha|\times |\beta|$ sub-matrix of $Z$ obtained by
removing all the
rows of $Z$  not in $\alpha$ and all the columns of $Z$  not in $\beta$. Denote $\textup{diag}(x_i \mid i\in \alpha)$ as the $|\alpha|\times|\alpha|$ diagonal matrix whose $i$-th diagonal entry is the $i$-th component of $x_{\alpha}$, $i=1,\ldots, |\alpha|$.
\item Let $D\subseteq \X$ be a set.  For any $x\in \X$,
define $\textup{dist}(x,D):= \inf_{d\in D} \|x-d\|$. {We let}  $\delta_{D}(\cdot)$ to 
be the indicator function over $D$, i.e., {$\delta_{D}(x) = 0$ if $x\in D$, and $\delta_D(x) = \infty$ if $x\not\in D$.}
\item
If $D\subseteq \X$ is  a convex set, 
we  use $\textup{ri}(D)$ to denote its relative interior.
{For a given closed convex set
$D\subseteq \X$}, the metric projection of $x\in\X$ onto $D$ is defined by $\Pi_{D}(x) := \arg\min \{\|x-d\|\mid d\in D\}$. For any $x\in D$,
{we use $\mathcal{T}_{D}(x)$ and $\mathcal{N}_{D}(x)$ to denote the tangent and normal cone of $D$ at $x$, 
respectively as in standard convex analysis~\cite{rockafellar2015convex}. }
 If $D$ is a closed convex cone, we use $D^\circ$ and $D^*$ to denote the polar of $D$ and  the dual of $D$, respectively, i.e.,
 $D^\circ:=\{x\in \mathbb{X}\mid \langle x, d\rangle \leq 0,\; \forall\,d\in D\}$ and $D^*:=-D^\circ$. 

\end{itemize}

\section{Preliminaries}

Let $F:\X\rightrightarrows\Y$ {be}
a multi-valued mapping. The graph of the mapping $F$ is defined as
$\text{gph}(F) := \{(x,y)\in\X\times \Y\,\mid\,y\in F(x)\}$.
The following definition  of metric subregularity  is taken from~\cite[Section 3.8(3H)]{dontchev2009implicit}.

\begin{defn}\label{defn:metric11}
A multi-valued mapping  $F:\X\rightrightarrows\Y$ is
said to be
metrically subregular at $\bar x\in\X$ for $\bar y\in\Y$ with modulus $\kappa\geq 0$ if $(\bar x, \bar y)\in \textup{gph}(F)$ and   there exist  neighborhoods $\mathcal{U}$ of $\bar x$ and  $\mathcal{V}$ of $\bar y$ such that
\begin{equation}\label{defn:metric}
\textup{dist}(x, F^{-1}(\bar y))\leq \kappa\,\textup{dist}(\bar y, F(x)\cap \mathcal{V}), \;\forall\, x\in\mathcal{U},
\end{equation}
or equivalently, $F$ is said to be metrically subregular at $\bar x$ for $\bar y$ with modulus $\kappa\geq 0$
if there exists a neighborhood $\mathcal{U}'$ of $\bar{x}$ such that
\begin{equation}\label{equiv:metricsub}
\textup{dist}(x, F^{-1}(\bar y))\leq \kappa\,\textup{dist}(\bar y, F(x)), \;\forall\, x\in\mathcal{U}'.
\end{equation}

\end{defn}
The next result, which provides a convenient way to check the metric  subregularity of  the subdifferential of a proper closed convex function,  is proven 
in~\cite[Theorem 3.3]{aragon2008characterization}.

\begin{prop}\label{pre:metricregular:thm}
Let $\mathcal{H}$ be a real Hilbert space endowed with the inner product $\langle \cdot,\cdot\rangle$ and $\theta:\mathcal{H}\to(-\infty, +\infty]$ be a proper lower semicontinuous convex function. Let $\bar{v}, \bar{x}\in\mathcal{H}$ satisfy $(\bar{x},\bar{v})\in\textup{gph}(\partial p)$. Then $\partial \theta$ is metrically subregular at $\bar{x}$ for $\bar{v}$ if and only if there exist a neighborhood $\mathcal{U}$ of $\bar{x}$ and a  constant $\kappa>0$ such that
\begin{equation}\label{pre:metricregular:ineq}
 \theta(x)\geq \theta(\bar{x}) + \langle \bar{v}, x-\bar{x}\rangle  + \kappa\,\textup{dist}^2(x, (\partial \theta)^{-1}(\bar{v})), \;\forall\, x\in\mathcal{U}.
\end{equation}
\end{prop}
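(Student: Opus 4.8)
The plan is to prove the two implications separately, using the characterization of metric subregularity via the distance function in \eqref{equiv:metricsub} together with the convexity of $\theta$.

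For the direction ``$\eqref{pre:metricregular:ineq} \Rightarrow$ metric subregularity of $\partial\theta$ at $\bar x$ for $\bar v$,'' I would start from a point $x$ in the neighborhood $\cU$ on which \eqref{pre:metricregular:ineq} holds, pick an arbitrary $v\in\partial\theta(x)$, and exploit the subgradient inequality in the reverse direction: $\theta(\bar x)\geq \theta(x)+\langle v,\bar x-x\rangle$. Adding this to \eqref{pre:metricregular:ineq} should produce
\[
\kappa\,\textup{dist}^2(x,(\partial\theta)^{-1}(\bar v))\le \langle v-\bar v,\, x-\bar x\rangle \le \|v-\bar v\|\,\|x-\bar x\|.
\]
Since $\bar x\in(\partial\theta)^{-1}(\bar v)$, we have $\|x-\bar x\|\geq \textup{dist}(x,(\partial\theta)^{-1}(\bar v))$ on one side, but to close the argument I need the opposite comparison, so I would instead bound $\|x-\bar x\|$ by $\textup{dist}(x,(\partial\theta)^{-1}(\bar v))$ plus a term controlled by shrinking $\cU$; more cleanly, one takes $v$ to be the specific subgradient realizing $\textup{dist}(\bar v,\partial\theta(x))$ and then shows $\textup{dist}(x,(\partial\theta)^{-1}(\bar v))\le (1/\kappa)\|v-\bar v\| = (1/\kappa)\textup{dist}(\bar v,\partial\theta(x))$ after dividing through, which gives \eqref{equiv:metricsub} with modulus $1/\kappa$. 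The minor technical point to handle is the case where $\partial\theta(x)=\emptyset$ (then the right-hand side is $+\infty$ and the inequality is trivial) and ensuring the neighborhood is small enough that the quadratic term dominates.

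For the converse, ``metric subregularity $\Rightarrow \eqref{pre:metricregular:ineq}$,'' I would invoke a one-dimensional restriction argument along the segment from $\bar x$ to $x$. Set $\phi(t) := \theta(\bar x + t(x-\bar x)) - \theta(\bar x) - t\langle \bar v, x-\bar x\rangle$, a convex function of $t\in[0,1]$ with $\phi(0)=0$ and $\phi'(0_+)\geq 0$ (since $\bar v\in\partial\theta(\bar x)$). One then has $\phi(1) = \int_0^1 \phi'(t)\,dt$, and the key is to lower-bound $\phi'(t)$ by something proportional to $t\,\textup{dist}^2(x,(\partial\theta)^{-1}(\bar v))$ using the metric subregularity estimate \eqref{equiv:metricsub}: a subgradient $v_t$ of $\theta$ at $\bar x+t(x-\bar x)$ satisfies $\textup{dist}(\bar x + t(x-\bar x), (\partial\theta)^{-1}(\bar v))\le \kappa\|v_t-\bar v\|$, and the left-hand side is comparable to $t\,\textup{dist}(x,(\partial\theta)^{-1}(\bar v))$ once $t$ is bounded away from the part of the segment near $\bar x$ (convexity of the solution set $(\partial\theta)^{-1}(\bar v)$ is used here). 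Combining $\langle v_t - \bar v,\, x-\bar x\rangle = \phi'(t)/\,(\text{something})$ with Cauchy–Schwarz in the right direction yields $\phi'(t)\gtrsim t\,\textup{dist}^2(x,(\partial\theta)^{-1}(\bar v))$, and integrating gives a bound of the form $\tfrac12 c\,\textup{dist}^2(x,(\partial\theta)^{-1}(\bar v))$, which is \eqref{pre:metricregular:ineq} after relabeling the constant.

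The main obstacle I anticipate is the converse direction, specifically controlling $\textup{dist}(\bar x + t(x-\bar x),(\partial\theta)^{-1}(\bar v))$ from below by a multiple of $t\cdot\textup{dist}(x,(\partial\theta)^{-1}(\bar v))$ uniformly in $t$: near $t=0$ the iterate is close to $\bar x\in(\partial\theta)^{-1}(\bar v)$, so this lower bound degenerates, and one must argue that the integral contribution from small $t$ is negligible, or equivalently split $[0,1]$ and handle $t$ near $0$ separately using only $\phi'(t)\geq 0$. Getting the neighborhoods and the final constant to line up — and verifying that the segment stays inside the neighborhood where \eqref{equiv:metricsub} is valid — is the delicate bookkeeping. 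Everything else (the subgradient inequalities, Cauchy–Schwarz, convexity of $(\partial\theta)^{-1}(\bar v)$) is routine. This is exactly the content of \cite[Theorem 3.3]{aragon2008characterization}, whose argument I would follow.
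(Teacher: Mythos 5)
The paper does not prove this proposition at all---it is imported verbatim from \cite[Theorem 3.3]{aragon2008characterization}---so there is no in-paper argument to compare against; judged on its own terms, your sketch has a genuine gap in each direction. In the direction ``\eqref{pre:metricregular:ineq} $\Rightarrow$ subregularity,'' pairing the subgradient inequality at $x$ with the base point $\bar{x}$ gives $\kappa\,\textup{dist}^2(x,(\partial\theta)^{-1}(\bar{v}))\leq \|v-\bar{v}\|\,\|x-\bar{x}\|$, and since $\textup{dist}(x,(\partial\theta)^{-1}(\bar{v}))\leq\|x-\bar{x}\|$ you cannot ``divide through'' to reach $\textup{dist}(x,(\partial\theta)^{-1}(\bar{v}))\leq\kappa^{-1}\|v-\bar{v}\|$; choosing the norm-minimizing $v$ does not help, and shrinking $\mathcal{U}$ does not make $\|x-\bar{x}\|$ comparable to the distance to the solution set (take $x$ inside an unbounded solution set far from $\bar{x}$ along it). The fix is to replace $\bar{x}$ by $\bar{u}:=\Pi_{(\partial\theta)^{-1}(\bar{v})}(x)$, observing that $\theta(\bar{x})+\langle\bar{v},\cdot-\bar{x}\rangle$ agrees with $\theta$ on all of $(\partial\theta)^{-1}(\bar{v})$ (every point there minimizes $\theta-\langle\bar{v},\cdot\rangle$), so that \eqref{pre:metricregular:ineq} reads $\theta(x)\geq\theta(\bar{u})+\langle\bar{v},x-\bar{u}\rangle+\kappa\|x-\bar{u}\|^2$; adding the subgradient inequality $\theta(\bar{u})\geq\theta(x)+\langle v,\bar{u}-x\rangle$ then yields $\kappa\|x-\bar{u}\|^2\leq\|v-\bar{v}\|\,\|x-\bar{u}\|$, where now $\|x-\bar{u}\|$ \emph{is} the distance and the division is legitimate.

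The converse direction is where your plan would actually fail. The pivotal claim $\phi'(t)\geq c\,t\,\textup{dist}^2(x,(\partial\theta)^{-1}(\bar{v}))$ requires a \emph{lower} bound on $\langle v_t-\bar{v},\,x-\bar{u}\rangle$ for subgradients $v_t$ along the segment, but metric subregularity only lower-bounds $\|v_t-\bar{v}\|$, and Cauchy--Schwarz converts that into an \emph{upper} bound on the inner product; there is no ``right direction'' of Cauchy--Schwarz that produces the estimate you need, since a subgradient of large norm may be nearly orthogonal to $x-\bar{u}$. This is precisely why the known proofs of this implication (including the one in \cite{aragon2008characterization}) do not integrate along segments but instead argue by contradiction via Ekeland's variational principle: if $\theta(x)<\theta(\bar{x})+\langle\bar{v},x-\bar{x}\rangle+c\,\textup{dist}^2(x,(\partial\theta)^{-1}(\bar{v}))$ for small $c$, Ekeland applied to $\theta-\langle\bar{v},\cdot\rangle$ produces a point $y$ with $\|y-x\|\leq\frac{1}{2}\textup{dist}(x,(\partial\theta)^{-1}(\bar{v}))$ and $\textup{dist}(\bar{v},\partial\theta(y))\leq 2c\,\textup{dist}(x,(\partial\theta)^{-1}(\bar{v}))$, and then \eqref{equiv:metricsub} applied at $y$ forces $\frac{1}{2}\textup{dist}(x,(\partial\theta)^{-1}(\bar{v}))\leq\textup{dist}(y,(\partial\theta)^{-1}(\bar{v}))\leq 2\kappa c\,\textup{dist}(x,(\partial\theta)^{-1}(\bar{v}))$, a contradiction once $c<1/(4\kappa)$. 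You should replace the integration argument with this variational one (or cite the reference, as the paper does).
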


A multi-valued mapping ${F}:\X\rightrightarrows\Y$ is said to be  polyhedral if its graph is the union of finitely many polyhedral convex sets.
Below is a fundamental result from Robinson~\cite{robinson1981some} on multi-valued polyhedral mappings.
\begin{prop}\label{prop:polyhedral}
Let ${F}:\X\rightrightarrows\Y$ be a multi-valued polyhedral mapping and $(\bar{x},\bar{y})\in\textup{gph}(F)$. Then $F$
is locally upper Lipschitz continuous at $\bar{x}$, i.e., there exist a constant $\kappa>0$ and a neighborhood $\mathcal{U}$ of $\bar{x}$ such that
$$
F(x)\subseteq F(\bar x) + \kappa\|x-\bar x\|\mathcal{B}_{\mathbb{Y}}(0,1), \;\forall\, x\in\mathcal{U}.
$$
\end{prop}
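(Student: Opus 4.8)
The plan is to deduce the statement from Hoffman's error bound for systems of linear inequalities, after decomposing $\textup{gph}(F)$ into its polyhedral pieces. First I would write $\textup{gph}(F)=\bigcup_{i=1}^{p}P_i$ with each $P_i$ a polyhedral convex set, represented as $P_i=\{(x,y)\in\X\times\Y\mid M_i x+N_i y\le q_i\}$, and introduce the ``slice'' multifunctions $F_i(x):=\{y\mid (x,y)\in P_i\}$, so that $F(x)=\bigcup_{i=1}^{p}F_i(x)$ and $\textup{dom}(F_i)$ is itself a polyhedron, namely the image of $P_i$ under the projection onto $\X$.

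The core step is a parametric Lipschitz estimate for a single piece: for each $i$ with $\bar x\in\textup{dom}(F_i)$, I claim there is $\lambda_i>0$ depending only on $M_i$ and $N_i$ such that for every $(x,y)\in P_i$ there is $\bar y\in F_i(\bar x)$ with $\|y-\bar y\|\le\lambda_i\|x-\bar x\|$. To prove this I would apply Hoffman's lemma to the system $N_i y\le q_i-M_i\bar x$ in the variable $y$ (consistent since $\bar x\in\textup{dom}(F_i)$), which furnishes a constant $\theta_i$ \emph{independent of the right-hand side} with $\textup{dist}(y,F_i(\bar x))\le\theta_i\|(N_i y-q_i+M_i\bar x)_+\|$, where $(\cdot)_+$ denotes the componentwise positive part. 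Writing $N_i y-q_i+M_i\bar x=M_i(\bar x-x)+(N_i y+M_i x-q_i)$ and using that $(x,y)\in P_i$ makes the second summand componentwise nonpositive, the right-hand side is bounded by $\theta_i\|M_i\|\,\|x-\bar x\|$; since $F_i(\bar x)$ is a nonempty closed set the distance is attained, which yields the desired $\bar y\in F_i(\bar x)\subseteq F(\bar x)$.

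To finish, I would localize so that only the ``good'' pieces matter. Let $I:=\{i\mid\bar x\in\textup{dom}(F_i)\}$. For $i\notin I$ the set $\textup{dom}(F_i)$ is closed and does not contain $\bar x$, so its complement is an open neighborhood of $\bar x$; intersecting these finitely many complements gives an open neighborhood $\mathcal U$ of $\bar x$ on which no ``bad'' slice is active. Then for $x\in\mathcal U$ and $y\in F(x)$, choosing $i$ with $(x,y)\in P_i$ forces $x\in\textup{dom}(F_i)$, hence $i\in I$, so the estimate above applies and produces $\bar y\in F(\bar x)$ with $\|y-\bar y\|\le\lambda_i\|x-\bar x\|$. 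Setting $\kappa:=\max_{i\in I}\lambda_i$ then gives $F(x)\subseteq F(\bar x)+\kappa\|x-\bar x\|\,\mathcal{B}_{\Y}(0,1)$ for all $x\in\mathcal U$, as required.

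The one genuinely nontrivial ingredient is Hoffman's error bound, and the feature that does all the work is that its constant depends only on the coefficient matrix and not on the right-hand side: this is exactly what makes the bound uniform as the parameter $x$ (equivalently the right-hand side $q_i-M_i x$) varies over the slice. I therefore expect that step to be the main obstacle; everything else is finite bookkeeping over the pieces $P_i$, together with the observation that a piece whose domain avoids $\bar x$ contributes nothing on a small enough neighborhood.
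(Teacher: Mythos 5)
Your argument is correct. Note first that the paper offers no proof of this proposition: it is quoted verbatim as a known result of Robinson (1981) on polyhedral multifunctions, so there is no in-paper argument to compare against. Your self-contained derivation is essentially the standard proof of that result: decompose $\textup{gph}(F)$ into convex polyhedral pieces $P_i$, discard on a small neighborhood of $\bar x$ the pieces whose domain (a polyhedron, hence closed) misses $\bar x$, and control each remaining slice $F_i$ by Hoffman's error bound applied to the system $N_i y\le q_i-M_i\bar x$, using that the residual of a point $(x,y)\in P_i$ in that system is bounded by $\|M_i\|\,\|x-\bar x\|$. All the steps check out, including the attainment of the distance on the closed nonempty set $F_i(\bar x)$ and the finite maximum over the active indices (if you want $\kappa>0$ literally, take $\kappa=\max\{1,\max_i\lambda_i\}$). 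One small remark: you emphasize that Hoffman's constant is independent of the right-hand side, but your argument does not actually need this --- $\bar x$ is fixed, so you only invoke Hoffman for the single system defining $F_i(\bar x)$, and the parameter $x$ enters solely through the residual estimate. The independence of the constant from the right-hand side would only matter if you wanted the modulus to be uniform over all base points $\bar x$, which is Robinson's stronger statement but more than this proposition asserts.
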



In our subsequent discussions, we also need the concept of bounded linear regularity of a collection of closed convex sets, which can be found from, e.g., ~\cite[Definition 5.6]{bauschke1996projection}.
\begin{defn}
Let $D_1, D_2, \ldots, D_m\subseteq\X$ be closed convex sets for some positive integer $m$.
{Suppose that
 $D := D_1\cap D_2 \cap\ldots\cap D_m$ is non-empty.}
 The collection $\{D_1, D_2, \ldots, D_m\}$ is said to be boundedly linearly regular if for every bounded set $\mathbb{B}\subseteq \X$, there exists a constant $\kappa >0$ such that
$$
\textup{dist}(x, D) \leq \kappa \max\left\{\textup{dist}(x, D_1), \ldots, \textup{dist}(x, D_m)\right\}, \;\forall\, x\in\mathbb{B}.
$$
\end{defn}
A sufficient condition to guarantee the property of bounded linear regularity is established  in~\cite[Corollary 3]{bauschke1999strong}.
\begin{prop}\label{prop:boundedlinear}
Let $D_1, D_2, \ldots, D_m\subseteq\X$ be closed convex sets for some positive integer $m$. Suppose that $D_1, D_2, \ldots, D_r$ are polyhedral for some $r\in\{0,1,\ldots, m\}$. Then a sufficient condition for
 $\{D_1, D_2, \ldots, D_m\}$ to be boundedly linearly regular is
 $$
 \bigcap_{i=1,2, \ldots, r} D_i \quad \cap\;  \bigcap_{i = r+1, \ldots, m} \textup{ri}\,(D_i)\neq \emptyset.
 $$
\end{prop}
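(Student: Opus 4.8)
The plan is to reduce everything to the separation between the polyhedral sets and the non-polyhedral ones, using a classical decomposition argument. First I would reduce to the case of two sets: set $P := \bigcap_{i=1}^r D_i$, which is itself polyhedral (a finite intersection of polyhedra), and $C := \bigcap_{i=r+1}^m D_i$. By Proposition 2.6 applied to the polyhedral family $\{D_1,\dots,D_r\}$ — whose intersection $P$ is nonempty since it contains the point in the hypothesis — the collection $\{D_1,\dots,D_r\}$ is boundedly linearly regular (polyhedral intersections always are, by Robinson's Proposition 2.4, since the intersection map is polyhedral and hence locally upper Lipschitz, giving a global error bound on any bounded set). Similarly, if I can show $\{P,C\}$ is boundedly linearly regular and $\{D_{r+1},\dots,D_m\}$ is boundedly linearly regular with respect to $C$, then chaining the three estimates (each valid on the given bounded set $\mathbb{B}$, possibly after enlarging it to control the intermediate projections) yields
\[
\textup{dist}(x,D)\ \le\ \kappa_1\,\textup{dist}(x,P\cap C)\ \le\ \kappa_2\max\{\textup{dist}(x,P),\textup{dist}(x,C)\}\ \le\ \kappa_3\max_i \textup{dist}(x,D_i),
\]
which is exactly the claim. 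So the crux is the two-set statement: \emph{if $P$ is polyhedral, $C$ is closed convex, and $\textup{ri}(C)\cap P\neq\emptyset$, then $\{P,C\}$ is boundedly linearly regular.}

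For that two-set statement, the key is the strong conical hull intersection property (strong CHIP) / the interiority condition $\textup{ri}(C)\cap P\neq\emptyset$, which is precisely the hypothesis that powers the cited result \cite[Corollary 3]{bauschke1999strong}; but to give a self-contained argument I would instead argue directly. Fix a bounded set $\mathbb{B}$ and $x\in\mathbb{B}$; let $p=\Pi_P(x)$, $c=\Pi_C(x)$, and let $x^*=\Pi_D(x)$ where $D=P\cap C$. One shows that if $\textup{dist}(x,P)$ and $\textup{dist}(x,C)$ are both small, then $x$ is close to a point of $D$. The mechanism: translate so that a point $\hat z\in\textup{ri}(C)\cap P$ is available; a small perturbation of $p$ in the direction of $\hat z$ lands inside $C$ (because $\hat z$ is in the relative interior of $C$ and $p$ is within $\textup{dist}(x,P)$, hence near $C$ too — one uses that the recession/affine-hull geometry of $C$ near $\hat z$ is controlled), while staying inside $P$ since $P$ is convex and both $p,\hat z\in P$. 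Quantifying the allowed step size uniformly over $\mathbb{B}$ is where polyhedrality of $P$ and the relative-interior condition combine to give a constant $\kappa$ depending only on $\mathbb{B}$ (and on $\hat z$, the "width" of $C$ at $\hat z$, and the polyhedral data of $P$).

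The main obstacle I anticipate is exactly this last quantification: producing a single constant $\kappa$ valid for all $x$ in the bounded set $\mathbb{B}$, rather than a merely local error bound near one point. The polyhedrality of $P$ is essential here — it is what upgrades Robinson's local upper Lipschitz property (Proposition 2.4) to an estimate that is uniform on bounded sets, and it is why only $D_1,\dots,D_r$ are required to be polyhedral while the rest need only satisfy the interiority condition. Concretely, I would invoke Proposition 2.4 (Robinson) to handle the polyhedral part globally, then patch in the non-polyhedral sets through the relative-interior assumption via a Hoffman-type / open-mapping estimate for convex sets with nonempty intersection of a polyhedron with the relative interior. Rather than reproving \cite[Corollary 3]{bauschke1999strong} in full, the honest and efficient route for the paper is to cite it: the statement of Proposition 2.7 is verbatim that corollary, so the proof is simply the observation that $\bigcap_{i\le r}D_i$ is polyhedral, $\bigcap_{i\le r}D_i\cap\bigcap_{i>r}\textup{ri}(D_i)\neq\emptyset$ is the required regularity hypothesis there, and bounded linear regularity is preserved under the finite-intersection chaining described above.
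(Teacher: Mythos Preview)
The paper gives no proof of this proposition: it is stated as a known result, attributed verbatim to \cite[Corollary 3]{bauschke1999strong}, and simply cited without argument. Your proposal eventually lands on this same conclusion in its final sentence, so in that sense you agree with the paper's treatment.

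That said, the bulk of your proposal is an attempted self-contained proof sketch, and since you asked for feedback on it: the overall architecture (reduce to the two-set case $P$ polyhedral, $C$ closed convex with $P\cap\textup{ri}(C)\neq\emptyset$, then chain error bounds) is the right shape and is indeed how the Bauschke--Borwein--Li argument proceeds. However, two of your reductions are not quite right as stated. First, you claim the collection $\{D_{r+1},\dots,D_m\}$ is boundedly linearly regular ``with respect to $C$'', but the hypothesis only gives you a point in $\bigcap_{i>r}\textup{ri}(D_i)$, not pairwise interiority, so you need the genuine $m$-set relative-interior result here rather than a two-set one --- this is not circular, but it does mean the two-set reduction doesn't buy as much as you suggest. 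Second, your direct geometric argument for the two-set case (``a small perturbation of $p$ in the direction of $\hat z$ lands inside $C$'') is the correct intuition but the step where you assert $p$ is ``near $C$'' is not justified: $p=\Pi_P(x)$ can be far from $C$ even when $x$ is close to both $P$ and $C$ separately, and controlling this is exactly where the work lies. The actual proof in \cite{bauschke1999strong} goes through the strong CHIP property and normal-cone calculus rather than this direct perturbation.

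Since the paper's own ``proof'' is a one-line citation, the cleanest fix to your write-up is simply to drop the sketch and cite the reference, as you yourself conclude.
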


In the following, we shall present an equivalent result on the metric subregularity of maximal monotone operators.
Consider the inclusion problem:
$$
0\in\Gamma(x) + \mathcal{T}(x),\; x\in\X,
$$
 where $\Gamma:\X\to\X$ is a continuous monotone operator and $\mathcal{T}:\X\rightrightarrows\X$ is a maximal monotone operator.
Define {the} mapping $\mathcal{R}:\X\to\X$ {by}
$$\mathcal{R}: = \mathcal{I} - (\mathcal{I}+\mathcal{T})^{-1}(\mathcal{I}- \Gamma),
$$
where $\mathcal{I}:\X\to\X$ is the identity operator.
Then one can see from~\cite{Minty1962monoton} that
$$
x\in(\Gamma + \mathcal{T})^{-1}(0) \;\Longleftrightarrow \; x\in\mathcal{R}^{-1}(0).
$$
The following proposition is on the equivalence between the metric subregularity of $\mathcal{R}$ and $\Gamma + \mathcal{T}$ at any $\bar{x}\in\mathcal{R}^{-1}(0)$ for the origin.
\begin{prop}\label{prop:equivmetric}
Suppose that $\mathcal{R}^{-1}(0)\neq \emptyset$. Let $\bar{x}\in\mathcal{R}^{-1}(0)$. Consider the following two statements:
\\[5pt]
(a) $\mathcal{R}$ is metrically subregular at $\bar{x}$ for the origin with modulus $\kappa_1\geq 0$ along with a neighborhood $\mathcal{B}_{\mathbb{X}}(\bar{x}, \rho_1)$, i.e.,
\begin{equation}\label{ebdefn:ineq}
\textup{dist}\;(x, \mathcal{R}^{-1}(0))\leq \kappa_1\|\mathcal{R}(x)\|, \; \forall\, x\in\mathcal{B}_{\mathbb{X}}(\bar{x}, \rho_1);
\end{equation}
(b) $\Gamma + \mathcal{T}$ is metrically subregular at $\bar{x}$ for the origin with modulus $\kappa_2\geq0$ along with a neighborhood $\mathcal{B}_{\mathbb{X}}(\bar{x}, \rho_2)$, i.e.,
\begin{equation}\label{metricsub:ineq}
\textup{dist}(x,(\Gamma + \mathcal{T})^{-1}(0))\leq \kappa_2\;\textup{dist}(0,(\Gamma+ \mathcal{T})(x)), \; \forall \, x\in\mathcal{B}_{\mathbb{X}}(\bar{x}, \rho_2).
\end{equation}
Then, the inequality (\ref{ebdefn:ineq}) implies the inequality (\ref{metricsub:ineq}) with $\rho_2 = \rho_1$ and $\kappa_2 = \kappa_1$.
Conversely,  if  the inequality (\ref{metricsub:ineq}) holds and there exists $\tau\geq 0$ such that
 $\Gamma$ is  Lipschitz continuous  on $\mathcal{B}_{\mathbb{X}}(\bar{x}, (1+\tau)^{-1}\rho_2)$ with modulus $\tau$,
then the inequality (\ref{ebdefn:ineq}) holds with {$\rho_1 = (1+\tau)^{-1}\rho_2$} and $\kappa_1 =  1 + (1+\tau)\kappa_2$.
\end{prop}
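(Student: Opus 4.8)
\medskip
\noindent\emph{Sketch of proof.}\;
The plan is to reduce both implications to the elementary nonexpansiveness of the resolvent $J := (\mathcal{I}+\mathcal{T})^{-1}$. By Minty's theorem $J$ is single-valued, defined on all of $\X$, and nonexpansive (indeed firmly nonexpansive), so $\mathcal{R}(x) = x - J(x-\Gamma(x))$ is a well-defined continuous map with $\mathcal{R}^{-1}(0) = (\Gamma+\mathcal{T})^{-1}(0) =: \Omega$. Everything will follow by comparing, pointwise, the residual $\|\mathcal{R}(x)\|$ with $\textup{dist}(0,(\Gamma+\mathcal{T})(x))$.

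For the implication (\ref{ebdefn:ineq}) $\Rightarrow$ (\ref{metricsub:ineq}) I would first establish the pointwise bound
\[
\|\mathcal{R}(x)\| \;\le\; \textup{dist}\bigl(0,(\Gamma+\mathcal{T})(x)\bigr), \qquad \forall\,x\in\X .
\]
This is immediate: for any $v\in(\Gamma+\mathcal{T})(x)$ one has $v-\Gamma(x)\in\mathcal{T}(x)$, hence $x = J\bigl(x+v-\Gamma(x)\bigr)$; since $J(x-\Gamma(x)) = x-\mathcal{R}(x)$, nonexpansiveness of $J$ gives $\|\mathcal{R}(x)\| = \|J(x+v-\Gamma(x)) - J(x-\Gamma(x))\| \le \|v\|$, and one takes the infimum over $v\in(\Gamma+\mathcal{T})(x)$. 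Substituting this into (\ref{ebdefn:ineq}) yields (\ref{metricsub:ineq}) on the same ball, i.e.\ with $\rho_2=\rho_1$ and $\kappa_2=\kappa_1$; note that no regularity of $\Gamma$ is used in this direction.

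For the converse, given $x$ near $\bar x$ I would introduce the resolvent step $\tilde x := J(x-\Gamma(x))$, so that $\|x-\tilde x\| = \|\mathcal{R}(x)\|$. From $x-\Gamma(x)-\tilde x\in\mathcal{T}(\tilde x)$ it follows that
\[
w := \mathcal{R}(x) + \Gamma(\tilde x) - \Gamma(x) \;=\; \bigl(x-\Gamma(x)\bigr) - \bigl(\tilde x-\Gamma(\tilde x)\bigr) \;\in\; (\Gamma+\mathcal{T})(\tilde x),
\]
so $\textup{dist}(0,(\Gamma+\mathcal{T})(\tilde x)) \le \|w\| \le \|\mathcal{R}(x)\| + \|\Gamma(\tilde x)-\Gamma(x)\|$. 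Using the local Lipschitz continuity of $\Gamma$ to bound $\|\Gamma(\tilde x)-\Gamma(x)\| \le \tau\|\tilde x-x\| = \tau\|\mathcal{R}(x)\|$, then applying (\ref{metricsub:ineq}) at $\tilde x$ and closing with the triangle inequality $\textup{dist}(x,\Omega) \le \|x-\tilde x\| + \textup{dist}(\tilde x,\Omega)$, one obtains $\textup{dist}(x,\Omega) \le \bigl(1+(1+\tau)\kappa_2\bigr)\|\mathcal{R}(x)\|$, which is (\ref{ebdefn:ineq}) with $\kappa_1 = 1+(1+\tau)\kappa_2$.

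The step I expect to need the most care is the neighborhood bookkeeping in the converse direction: the use of (\ref{metricsub:ineq}) at $\tilde x$ and the Lipschitz estimate for $\Gamma$ along the segment joining $x$ and $\tilde x$ are only valid if $\tilde x$ stays inside the prescribed balls. Here I would use nonexpansiveness of $J$ a second time: since $0\in\Gamma(\bar x)+\mathcal{T}(\bar x)$ gives $\bar x = J(\bar x-\Gamma(\bar x))$, we get $\|\tilde x-\bar x\| \le \|(x-\Gamma(x)) - (\bar x-\Gamma(\bar x))\| \le (1+\tau)\|x-\bar x\|$, so restricting $x$ to a ball of radius $(1+\tau)^{-1}\rho_2$ confines $\tilde x$ to $\mathcal{B}_{\mathbb{X}}(\bar x,\rho_2)$; carefully tracking these inclusions, together with the domain of Lipschitz continuity of $\Gamma$, is what pins down the stated radius $\rho_1 = (1+\tau)^{-1}\rho_2$.
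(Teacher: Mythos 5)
Your argument is correct in substance and supplies exactly the details the paper omits: the paper's ``proof'' consists of a citation to Dong's Theorem~3.1 \cite{dong2009extension} together with the remark that the argument carries over from distance-type neighborhoods to balls, whereas you write out the underlying resolvent computation. Both directions of your sketch are sound. The forward implication rests on the pointwise inequality $\|\mathcal{R}(x)\|\leq\textup{dist}(0,(\Gamma+\mathcal{T})(x))$ obtained from nonexpansiveness of $J=(\mathcal{I}+\mathcal{T})^{-1}$ (and indeed uses no regularity of $\Gamma$), and the converse rests on the membership $\mathcal{R}(x)+\Gamma(\tilde x)-\Gamma(x)\in(\Gamma+\mathcal{T})(\tilde x)$ for $\tilde x=J(x-\Gamma(x))$ followed by the triangle inequality; the constants $\rho_2=\rho_1$, $\kappa_2=\kappa_1$ and $\rho_1=(1+\tau)^{-1}\rho_2$, $\kappa_1=1+(1+\tau)\kappa_2$ come out exactly as stated.

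One piece of bookkeeping is not fully closed. The hypothesis only gives Lipschitz continuity of $\Gamma$ with modulus $\tau$ on the small ball $\mathcal{B}_{\mathbb{X}}(\bar x,(1+\tau)^{-1}\rho_2)$. Your containment estimate $\|\tilde x-\bar x\|\leq(1+\tau)\|x-\bar x\|$ places $\tilde x$ only in the larger ball $\mathcal{B}_{\mathbb{X}}(\bar x,\rho_2)$, which suffices for invoking (\ref{metricsub:ineq}) at $\tilde x$ but not for the step $\|\Gamma(\tilde x)-\Gamma(x)\|\leq\tau\|\tilde x-x\|$, since $\tilde x$ need not lie in the set on which $\Gamma$ is assumed Lipschitz. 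To make that step airtight one should either read the hypothesis as Lipschitz continuity on $\mathcal{B}_{\mathbb{X}}(\bar x,\rho_2)$ (which is evidently the intent) or shrink $\rho_1$ accordingly; this is an imprecision shared with the statement of the proposition rather than a defect specific to your argument, but it deserves a sentence in the write-up.
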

\begin{proof}
In~\cite[Theorem 3.1]{dong2009extension}, Dong proved the equivalence of parts (a) and (b),
 with $\mathcal{B}_{\mathbb{X}}(\bar{x}, \rho_1)$ in (\ref{ebdefn:ineq}) being replaced by   $\{x\in\X \mid \textup{dist}(x,\mathcal{R}^{-1}(0))\leq  \epsilon_1\}$  for some $\epsilon_1>0$ and $\mathcal{B}_{\mathbb{X}}(\bar{x}, \rho_2)$ in (\ref{metricsub:ineq}) being replaced by  $\{x\in\X \mid \textup{dist}(x,(\Gamma + \mathcal{T})^{-1}(0))\leq  \epsilon_2\}$ for some $\epsilon_2>0$, respectively. The proof of Proposition \ref{prop:equivmetric} can be conducted in a similar way as {in}
 \cite[Theorem 3.1]{dong2009extension}. For brevity,  we omit the details here.
\end{proof}

\section{The metric subregularity of solution mappings}

In this section, we shall discuss the metric subregularity of the solution mappings for solving 
linearly constrained convex semidefinite programming with multiple solutions. These problems can be cast into the following form:
\begin{equation}\label{eb:opt}
\begin{array}{cl}
\min &  \theta(x):=  h(\mathcal{F}x) +\langle c,x\rangle + p(x) \\[5pt]
\text{s.t.} &  b- \mathcal{A}x \in \mathcal{Q}^\circ,
\end{array}
\end{equation}
where $\mathcal{F}:\X\to\W$ and $\mathcal{A}:\X\to\Y$  are {given linear maps, $\mathcal{Q}\subseteq\mathbb{Y}$ is a given convex polyhedral cone,
$c\in \X$ and $b\in\Y$ are given data},
$p:\X\to (-\infty, +\infty]$ is a closed proper convex  function,
$h:\W\to (\infty, +\infty]$ is continuously differentiable on  $\text{dom}(h)$, which is assumed to be a non-empty  open convex set,
  and is also strictly convex on any convex subset of $\textup{dom}(h)$.
The dual of problem (\ref{eb:opt}) can be  written, in
 its equivalent minimization form,  as
\begin{equation}\label{dual2}
\begin{array}{cl}
\min &  \delta_{\mathcal{Q}}(y) -\langle b,y\rangle  + h^*(-w)  + p^*(-s)\\[5pt]
\text{s.t.} & \mathcal{A}^*y  + \mathcal{F}^*w + s = c.
\end{array}
\end{equation}
For notational convenience, define $\Z:=\Y \times \W\times \X$ and for any $(y,w,s)\in\Y\times \W\times \X$,  write $z:=(y,w,s)$.

The Lagrangian function $l$
 associated with {problem (\ref{dual2}) is given by}
\begin{equation}\label{defn:l}
l(z,x):= \delta_{\mathcal{Q}}(y)
-\langle b,y\rangle  + h^*(-w)  + p^*(-s) + \langle x, \mathcal{A}^*y  + \mathcal{F}^*w + s -c\rangle,\;\forall\,(z,x)\in\Z\times \X.
\end{equation}
Define {the}  functions $\psi$  and $\phi$ {by}
\begin{equation}\label{defn:phipsi}
\psi(z) := \sup_{x\in\X} l(z,x),\; \forall\, z\in\Z,\quad  \quad
\phi(x) := \inf_{z\in\Z} l(z,x),\; \forall\, x\in\X.
\end{equation}
Moreover, we define {the} mapping $\mathcal{T}_l:\Z\times \X\rightrightarrows\Z\times \X$ as
\begin{equation}\label{defn:Tl}
\mathcal{T}_l(z,x):=\big\{(u,v)\in\Z\times \X\; {\mid} \; (u,-v)\in\partial l(z,x)\big\}, \;\forall \;(z,x)\in\Z\times \X
\end{equation}
and {the} mappings  $\mathcal{T}_\psi:\Z\rightrightarrows\Z$ and $\mathcal{T}_\phi:\X\rightrightarrows\X$ as
\begin{equation}\label{defn:Tphi}
\mathcal{T}_\psi(z):=\partial \psi(z), \;\forall\, z\in\Z,\quad \quad
\mathcal{T}_\phi(x):=- \partial \phi(x), \;\forall\, x\in\X.
\end{equation}

Assume that problem (\ref{dual2}) admits at least one optimal solution $(\bar{y}, \bar{w}, \bar{s})\in\Y\times\W\times \X$.  Let $\mathcal{M}_\psi(\bar{z})\subseteq\X$ denote the set of Lagrangian multipliers corresponding to $\bar{z}$, i.e.,
$\bar{x}\in\mathcal{M}_\psi(\bar{z})$ if and only if $(\bar{y},\bar{w}, \bar{s}, \bar{x})$ solves the following KKT system:
 \begin{equation}\label{kkt}
\left\{\begin{array}{ll}
0\in -b + \mathcal{A}{x} +\mathcal{N}_{\mathcal{Q}}(y),\;
0\in \mathcal{F}x - \partial h^*(-w),\;
0\in x -\partial p^*(-s),\\[5pt]
0 = c - (\mathcal{A}^*y + \mathcal{F}^*w + s),
\end{array}\right. \quad (y,w,s,x)\in\Z\times \X.
\end{equation}
It can be easily checked  that if  $(\bar{z}, \bar{x}) = (\bar{y}, \bar{w}, \bar{s}, \bar{x})\in\Y\times \W\times \X\times \X$ is a solution to the KKT system (\ref{kkt}), then $(\bar{x}, \bar{y})$ solves the following KKT system:
 \begin{equation}\label{kktp}
\left\{\begin{array}{ll}
0 \in c - \mathcal{A}^*y+\mathcal{F}^*\nabla h(\mathcal{F}x)  + \partial p(x),\\[5pt]
0\in \mathcal{A}{x} - b +\mathcal{N}_{\mathcal{Q}}(y),
\end{array}\right. \quad (x,y)\in\X\times \Y;
\end{equation}
conversely, if $(\bar{x}, \bar{y})\in\X\times \Y$ solves (\ref{kktp}), then for $\bar{w} = -\nabla h(\mathcal{F}\bar{x})$ and  $\bar{s} = c-\mathcal{A}^*\bar{y} - \mathcal{F}^*\bar{w}$, $(\bar{z}, \bar{x}) = (\bar{y}, \bar{w}, \bar{s}, \bar{x})\in\Y\times \W\times \X\times \X$ solves the KKT system (\ref{kkt}). Let $\mathcal{M}_\phi(\bar{x})\subseteq\Z$ be the set of Lagrangian multipliers corresponding to $\bar{x}\in\mathcal{M}_\psi(\bar{z})$.

\subsection{The metric subregularity of $\mathcal{T}_\phi$}

Assume that the KKT system (\ref{kkt}) or (\ref{kktp}) admits at least one solution.
%
%
%
%
%
It is known from~\cite[Theorem 30.4 \text{and} Corollary 30.5.1]{rockafellar2015convex} that $(\bar{z}, \bar{x})\in\Z\times \X$ solves the KKT system (\ref{kkt}) if and only if
$\bar{z}\in\Z$ solves problem (\ref{dual2})  and  $\bar{x}\in\X$ solves problem (\ref{eb:opt}).
To further characterize  $\mathcal{T}^{-1}_{\phi}(0)$, we need the following invariant
%
%
%
 property of $\mathcal{F}x$ over  $x\in\mathcal{T}^{-1}_{\phi}(0)$, whose proof readily follows from the well-known existing techniques in the literature~\cite{mangasarian1988simple, luo1992linear,tseng2010approximation}.

\begin{lemma}\label{lemma:invariant}
The value $\mathcal{F}{x}$ is invariant over $x\in\mathcal{T}^{-1}_{\phi}(0)$, i.e.,
for any $x', x''\in\mathcal{T}^{-1}_{\phi}(0)$, we have $\mathcal{F}{x'} = \mathcal{F}x''$.
\end{lemma}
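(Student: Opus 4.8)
The plan is to exploit the strict convexity of $h$ together with the convexity of the solution set $\mathcal{T}_\phi^{-1}(0)$. First I would recall that $\mathcal{T}_\phi^{-1}(0)$ coincides with the set of optimal solutions of the primal problem~(\ref{eb:opt}) (this follows from the duality characterization cited just before the lemma, via $\mathcal{T}_\phi = -\partial\phi$ and the fact that $\phi$ is, up to sign, the primal objective on its feasible set). In particular $\mathcal{T}_\phi^{-1}(0)$ is a \emph{convex} set, being the argmin set of a convex function over a convex feasible region, and the optimal value $\theta^* = \min\theta$ is attained on it.

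Next I would take $x', x'' \in \mathcal{T}_\phi^{-1}(0)$ and set $x_t := (1-t)x' + t x''$ for $t\in[0,1]$. By convexity of the feasible set, $x_t$ is feasible; by convexity of $\theta$ and optimality of $x', x''$, we get $\theta(x_t)\le (1-t)\theta(x') + t\theta(x'') = \theta^*$, hence $\theta(x_t) = \theta^*$ for all $t\in[0,1]$, so the whole segment lies in $\mathcal{T}_\phi^{-1}(0)$. Now expand $\theta(x_t) = h(\mathcal{F}x_t) + \langle c, x_t\rangle + p(x_t)$. Both $x\mapsto \langle c,x\rangle$ and (by convexity of $p$) $x\mapsto p(x)$ are affine-dominated along the segment, i.e. $\langle c,x_t\rangle = (1-t)\langle c,x'\rangle + t\langle c,x''\rangle$ and $p(x_t)\le (1-t)p(x') + t p(x'')$. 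Since the total is constantly $\theta^*$ and equals the convex combination of the endpoint values, the inequality for $p$ must be an equality and likewise $h(\mathcal{F}x_t) = (1-t)h(\mathcal{F}x') + t h(\mathcal{F}x'')$ for every $t\in[0,1]$. Thus $t\mapsto h(\mathcal{F}x_t) = h\big((1-t)\mathcal{F}x' + t\mathcal{F}x''\big)$ is affine on $[0,1]$, so $h$ is affine on the segment $[\mathcal{F}x', \mathcal{F}x'']\subseteq\mathrm{dom}(h)$, which (being contained in the open convex domain) is a nontrivial convex subset of $\mathrm{dom}(h)$ unless $\mathcal{F}x' = \mathcal{F}x''$. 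But $h$ is assumed strictly convex on every convex subset of $\mathrm{dom}(h)$, so it cannot be affine on a segment of positive length; therefore $\mathcal{F}x' = \mathcal{F}x''$, which is the claim.

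The main obstacle—really the only delicate point—is making rigorous the passage ``$\mathcal{T}_\phi^{-1}(0)$ equals the primal solution set and is therefore convex with a well-defined common value $\theta^*$'', and in particular handling the possibility that $p$ takes the value $+\infty$: one must check that along the segment all the relevant quantities are finite (they are, since $x'$ and $x''$ are optimal hence in $\mathrm{dom}(\theta)$, and $\mathrm{dom}(p)$, $\mathcal{F}^{-1}(\mathrm{dom}(h))$ are convex). I would dispatch this by a short paragraph citing the duality result already quoted and noting that convexity of $\phi$ makes $\mathcal{T}_\phi^{-1}(0) = \arg\min_x\{h(\mathcal{F}x) + \langle c,x\rangle + p(x)\}$ over the implicit domain, so that the convex-combination argument above goes through verbatim; the strict-convexity hypothesis on $h$ then does the rest. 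No second-order information or metric-regularity machinery is needed here—this is purely a first-order convexity argument, which matches the remark in the excerpt that the proof ``readily follows'' from standard techniques.
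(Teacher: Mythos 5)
Your argument is correct and is precisely the standard convexity technique (constant objective on the convex solution set, each convex summand forced to be affine along a segment, strict convexity of $h$ on convex subsets of $\textup{dom}(h)$ ruling out a nontrivial segment) that the paper invokes by citing Mangasarian and Luo--Tseng rather than writing out. The delicate points you flag --- identifying $\mathcal{T}_\phi^{-1}(0)$ with the primal argmin set via the KKT characterization, and finiteness of each term at optimal points --- are handled adequately, so no gap remains.
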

Take an arbitrary point $\bar{x}\in\mathcal{T}^{-1}_{\phi}(0)$ and denote
\begin{equation}\label{defn:ubar}
\begin{array}{ll}
\bar{\zeta}: = \mathcal{F}\bar{x}, \quad
 \bar{\eta}: = \mathcal{F}^*\nabla h(\bar{\zeta}) +c,\quad
 \overline{\mathcal{V}}:=\{x\in\X\; {\mid}\;  \mathcal{F}x = \bar{\zeta}\}.  \end{array}
 \end{equation}
We define two multi-valued mappings $\mathcal{G}_1:\Y\rightrightarrows\X$ and $\mathcal{G}_2:\Y\rightrightarrows\X$  by
 \begin{equation}\label{defn:mappingG}
 \mathcal{G}_1(y) := (\partial p)^{-1}(\mathcal{A}^*y - \bar{\eta}),\quad  \quad
\mathcal{G}_2(y) := \{x \;{\mid}\; 0\in \mathcal{A}x -b + \mathcal{N}_{\mathcal{Q}}(y)\}, \; \forall\, y\in\Y.
 \end{equation}
Then, from  (\ref{eb:opt}), (\ref{kkt}), Lemma \ref{lemma:invariant} and the arguments above Lemma \ref{lemma:invariant},   we immediately obtain the following useful observation  {for} the optimal solution set $\mathcal{T}^{-1}_{\phi}(0)$.

 \begin{prop}\label{prop:Omega}
 Assume that $(\bar{y},\bar{w}, \bar{s}, \bar{x})\in\Z\times \X$ solves the KKT system (\ref{kkt}).
Then  the  optimal solution set $\mathcal{T}^{-1}_{\phi}(0)$ to problem (\ref{eb:opt}) can be characterized as
%
$$\begin{array}{ll}
\mathcal{T}^{-1}_{\phi}(0)  = \{x\in\X \; {\mid} \; \mathcal{F}x = \bar{\zeta}, \; 0\in \bar{\eta} + \partial p({x}) - \mathcal{A}^*\bar{y},\; 0\in \mathcal{A}x - b+\mathcal{N}_{\mathcal{Q}}(\bar{y})\} = \overline{\mathcal{V}}\cap \mathcal{G}_1(\bar{y})\cap \mathcal{G}_2(\bar{y}).
\end{array}
$$\end{prop}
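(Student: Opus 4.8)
The plan is to unwind the definitions of $\phi$ and $\mathcal{T}_\phi$ and show that membership of $x$ in $\mathcal{T}^{-1}_\phi(0)$ is exactly the KKT optimality condition for the primal problem~(\ref{eb:opt}). First I would recall, via the cited results \cite[Theorem 30.4 and Corollary 30.5.1]{rockafellar2015convex}, that $\mathcal{T}^{-1}_\phi(0) = -\partial\phi^{-1}(0)$ coincides with the set of minimizers of $\phi$ over $\X$, and that by the duality structure set up above (the correspondence between the KKT systems~(\ref{kkt}) and~(\ref{kktp})) this is precisely the optimal solution set of~(\ref{eb:opt}). Hence every $x\in\mathcal{T}^{-1}_\phi(0)$ is an optimal solution of~(\ref{eb:opt}), and together with any dual optimal $\bar y$ (whose existence is assumed), the pair $(x,\bar y)$ solves~(\ref{kktp}):
\begin{equation*}
0\in c - \mathcal{A}^*\bar y + \mathcal{F}^*\nabla h(\mathcal{F}x) + \partial p(x),\qquad 0\in \mathcal{A}x - b + \mathcal{N}_{\mathcal{Q}}(\bar y).
\end{equation*}

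Next I would invoke Lemma~\ref{lemma:invariant}: the value $\mathcal{F}x$ is constant over $\mathcal{T}^{-1}_\phi(0)$, so $\mathcal{F}x = \mathcal{F}\bar x = \bar\zeta$ for every such $x$, where $\bar x$ is the fixed reference point. Substituting $\mathcal{F}x = \bar\zeta$ into the first inclusion of~(\ref{kktp}) replaces the nonlinear term $\mathcal{F}^*\nabla h(\mathcal{F}x)$ by the constant $\mathcal{F}^*\nabla h(\bar\zeta)$, so that inclusion becomes $0\in (\mathcal{F}^*\nabla h(\bar\zeta)+c) - \mathcal{A}^*\bar y + \partial p(x) = \bar\eta - \mathcal{A}^*\bar y + \partial p(x)$, using the definition~(\ref{defn:ubar}) of $\bar\eta$. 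This gives the first displayed characterization in the proposition:
\begin{equation*}
\mathcal{T}^{-1}_\phi(0) = \{x\in\X \mid \mathcal{F}x = \bar\zeta,\ 0\in \bar\eta + \partial p(x) - \mathcal{A}^*\bar y,\ 0\in \mathcal{A}x - b + \mathcal{N}_{\mathcal{Q}}(\bar y)\}.
\end{equation*}
I would prove the two inclusions separately: ``$\subseteq$'' is the argument just given; for ``$\supseteq$'', if $x$ satisfies the three conditions on the right, then reversing the substitution (again using $\mathcal{F}x=\bar\zeta$) shows $(x,\bar y)$ solves~(\ref{kktp}), hence $x$ is optimal for~(\ref{eb:opt}), hence $x\in\mathcal{T}^{-1}_\phi(0)$.

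Finally, the second equality $\overline{\mathcal{V}}\cap\mathcal{G}_1(\bar y)\cap\mathcal{G}_2(\bar y)$ is just a rewriting: $\mathcal{F}x = \bar\zeta$ is $x\in\overline{\mathcal{V}}$; the inclusion $0\in\bar\eta+\partial p(x)-\mathcal{A}^*\bar y$ rearranges to $\mathcal{A}^*\bar y - \bar\eta\in\partial p(x)$, i.e. $x\in(\partial p)^{-1}(\mathcal{A}^*\bar y-\bar\eta)=\mathcal{G}_1(\bar y)$; and the last inclusion is exactly $x\in\mathcal{G}_2(\bar y)$ by the definition~(\ref{defn:mappingG}). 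I do not anticipate a genuine obstacle here — the statement is essentially a bookkeeping consequence of results already established; the one point requiring slight care is making sure the reference point $\bar y$ used in defining $\mathcal{G}_1,\mathcal{G}_2$ is an arbitrary fixed dual optimal solution and that the characterization is independent of which optimal $x$-point one started from, which is precisely what Lemma~\ref{lemma:invariant} guarantees for $\bar\zeta$ (and hence for $\bar\eta$).
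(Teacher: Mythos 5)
Your proposal is correct and follows exactly the route the paper intends: the paper gives no separate proof, stating that the characterization follows ``immediately'' from the equivalence between KKT points of (\ref{kkt}) and primal--dual optimality (Rockafellar's Theorem 30.4 and Corollary 30.5.1), Lemma \ref{lemma:invariant}, and the definitions of $\bar\zeta$, $\bar\eta$, $\overline{\mathcal{V}}$, $\mathcal{G}_1$, $\mathcal{G}_2$ --- which is precisely the bookkeeping you carry out. Your explicit treatment of both inclusions and the remark that the invariance of $\mathcal{F}x$ makes $\bar\eta$ well defined are exactly the points that make the ``immediate'' claim rigorous.
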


{To analyse the metric subregularity of $\cT_\phi$, we will need the following assumption later.}

\begin{assumption}\label{ass:f}
The following local growth conditions hold:
 \\
(i) For any ${w}\in\textup{dom}(h)$, there exist $\kappa_1>0$  and a neighborhood $\mathcal{W}\subseteq\W$ of $w$ such that
$$
h(w')\geq h({w}) + \langle \nabla h({w}), w' - {w}\rangle  + \kappa_1\|w'  -{w}\|^2, \;\forall\, w'\in\mathcal{W}.
$$
(ii) For any $(x,v)\in\textup{gph}(\partial p)$, there exist $\kappa_2>0$  and a neighborhood $\mathcal{U}\subseteq\X$ of $x$ such that
$$
p(x')\geq p(x) + \langle v, x' - {x}\rangle  + \kappa_2\;\textup{dist}^2(x', (\partial p)^{-1}(v)), \;\forall \, x'\in\mathcal{U}.
$$
\end{assumption}


We say {that} for problem (\ref{eb:opt}),
 the second order growth condition holds at an optimal solution $\bar{x}\in \mathcal{T}^{-1}_{\phi}(0)$ with respect to the set $ \mathcal{T}^{-1}_{\phi}(0)$  if  there exist $\kappa>0$ and a neighborhood $\mathcal{U}$ of $\bar{x}$ such that
\begin{equation}\label{defn:quadraticGrowth}
\theta(x) \geq \theta(\bar{x}) + \kappa\,\textup{dist}^2(x,  \mathcal{T}^{-1}_{\phi}(0)), \;\forall\, x\in\mathcal{U}\cap\{x\in\X\,\mid\, b - \mathcal{A}x \in\mathcal{Q}^\circ\}.
\end{equation}
Consider an arbitrarily fixed point $\bar{x}\in\mathcal{T}_{\phi}^{-1}(0)$. It can be seen from Proposition \ref{pre:metricregular:thm} that
the operator $\mathcal{T}_\phi$ is metrically subregular at $\bar{x}$ for the origin if and only if the second order growth condition (\ref{defn:quadraticGrowth}) holds at $\bar{x}$ with respect to $\mathcal{T}^{-1}_{\phi}(0)$.
Thus, we can study the metric subregularity of $\mathcal{T}_{\phi}$ at $\bar{x}$ for the origin via the second order growth condition (\ref{defn:quadraticGrowth}).
The following lemma is convenient  for our later discussions. \begin{lemma}\label{lemma:polyhdist}
Let $\bar{x}\in\mathcal{T}_\phi^{-1}(0)$ and
$\bar{y}\in\mathcal{M}_{\phi}(\bar{x})$. Then there exist a constant $\kappa>0$ and a neighborhood  $\mathcal{U}$ of $\bar{x}$ such that
$$\textup{dist}(x, \mathcal{G}_2(\bar{y}))\leq \kappa \; \textup{dist}(b-\mathcal{A}x, \mathcal{N}_{\mathcal{Q}}(\bar{y})), \;\forall\, x\in\mathcal{U}.
$$\end{lemma}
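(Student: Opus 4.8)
The plan is to reduce the distance estimate to a statement about a polyhedral multi-valued mapping and then invoke Robinson's upper Lipschitz theorem (Proposition~\ref{prop:polyhedral}). First I would recall that $\mathcal{Q}$ is a polyhedral convex cone, so the normal cone map $y\mapsto\mathcal{N}_{\mathcal{Q}}(y)$ has a graph that is a finite union of polyhedral sets; consequently the (partial) inverse map
$$
\Phi(y,v):=\{x\in\X\;\mid\; b-\mathcal{A}x-v\in\mathcal{N}_{\mathcal{Q}}(y)\}
$$
is a polyhedral multi-valued mapping in the variables $(y,v)$, since its graph is the preimage of $\text{gph}(\mathcal{N}_{\mathcal{Q}})$ under the linear map $(y,v,x)\mapsto(y,\,b-\mathcal{A}x-v)$, and preimages of polyhedral sets under linear maps are polyhedral. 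Note $\mathcal{G}_2(\bar y)=\Phi(\bar y,0)$ and $\bar x\in\mathcal{G}_2(\bar y)$ because $\bar y\in\mathcal{M}_\phi(\bar x)$ means the second line of the KKT system~(\ref{kktp}) holds at $(\bar x,\bar y)$.

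Next I would fix the $y$-argument at $\bar y$ and apply Proposition~\ref{prop:polyhedral} to the polyhedral map $v\mapsto\Phi(\bar y,v)$ at the point $(0,\bar x)\in\text{gph}(\Phi(\bar y,\cdot))$: there exist $\kappa>0$ and $\rho>0$ such that
$$
\Phi(\bar y,v)\subseteq \Phi(\bar y,0)+\kappa\|v\|\,\mathcal{B}_{\mathbb{X}}(0,1),\qquad \forall\, v\in\mathcal{B}_{\W}(0,\rho)
$$
(with $\mathcal{B}_{\W}$ read in the appropriate space $\Y$). Now take $x$ in a neighborhood $\mathcal{U}$ of $\bar x$ small enough that $b-\mathcal{A}x$ stays within $\rho$ of $\mathcal{N}_{\mathcal{Q}}(\bar y)$ and, more carefully, small enough that the nearest point $v^\star:=\Pi_{\mathcal{N}_{\mathcal{Q}}(\bar y)}(b-\mathcal{A}x)$ satisfies $\|(b-\mathcal{A}x)-v^\star\|=\textup{dist}(b-\mathcal{A}x,\mathcal{N}_{\mathcal{Q}}(\bar y))\le\rho$; this follows by continuity of $x\mapsto b-\mathcal{A}x$ and the fact that $b-\mathcal{A}\bar x\in\mathcal{N}_{\mathcal{Q}}(\bar y)$, so the distance tends to $0$ as $x\to\bar x$. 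Setting $r:=(b-\mathcal{A}x)-v^\star$, we have $b-\mathcal{A}x-r=v^\star\in\mathcal{N}_{\mathcal{Q}}(\bar y)$, i.e. $x\in\Phi(\bar y,r)$ with $\|r\|\le\rho$; hence by the upper Lipschitz inclusion there is $x'\in\mathcal{G}_2(\bar y)$ with $\|x-x'\|\le\kappa\|r\|=\kappa\,\textup{dist}(b-\mathcal{A}x,\mathcal{N}_{\mathcal{Q}}(\bar y))$, which gives the claim.

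The only genuine point requiring care — and the step I expect to be the main obstacle — is the verification that $\Phi(\bar y,\cdot)$ is polyhedral in the precise sense of Proposition~\ref{prop:polyhedral}, namely that $\text{gph}(\mathcal{N}_{\mathcal{Q}})$ is a finite union of polyhedral convex sets when $\mathcal{Q}$ is a polyhedral cone. This is standard (one decomposes $\mathcal{Q}$ into the relative interiors of its faces; on each face $\mathcal{N}_{\mathcal{Q}}(\cdot)$ is the constant polyhedral cone equal to the normal cone of that face, so $\text{gph}(\mathcal{N}_{\mathcal{Q}})$ is the finite union over faces $G$ of the polyhedral sets $(\text{relint }G)^{-}\times\mathcal{N}_{\mathcal{Q}}(G)$, whose closures are polyhedral), but one must be a little careful to take closures so that each piece is genuinely a polyhedral \emph{convex} set; restricting to a neighborhood of $(\bar x,\bar y)$ only involves those finitely many pieces whose closure contains $(\bar y,b-\mathcal{A}\bar x)$, and the argument is unaffected. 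Everything else is a routine application of Robinson's theorem plus the continuity reduction used to land the perturbation $r$ inside the admissible ball.
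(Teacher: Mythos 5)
Your argument is correct, but it runs through a different key lemma than the paper's. You realize $\mathcal{G}_2(\bar y)=\Phi(\bar y,0)$ as the value of the perturbation map $v\mapsto\Phi(\bar y,v)$, check that this map is polyhedral, and invoke Robinson's local upper Lipschitz continuity (Proposition~\ref{prop:polyhedral}) at $(0,\bar x)$, landing the perturbation $r=(b-\mathcal{A}x)-\Pi_{\mathcal{N}_{\mathcal{Q}}(\bar y)}(b-\mathcal{A}x)$ inside the admissible ball by continuity. The paper instead works in the product space $\X\times\Y$ with the affine set $\Xi_1=\{(x,q)\mid b-\mathcal{A}x=q\}$ and the polyhedral set $\Xi_2=\{(x,q)\mid q\in\mathcal{N}_{\mathcal{Q}}(\bar y)\}$, applies the bounded linear regularity criterion of Proposition~\ref{prop:boundedlinear} to the pair $\{\Xi_1,\Xi_2\}$, notes that $(x,b-\mathcal{A}x)$ already lies in $\Xi_1$ so only the distance to $\Xi_2$ survives, and then descends from the product-space distance to $\textup{dist}(x,\mathcal{G}_2(\bar y))$. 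Both are instances of the same Hoffman-type polyhedral error bound, and the bookkeeping is comparable; your route has the mild advantage of producing the constant directly from one application of Robinson's theorem, while the paper's avoids any discussion of which perturbations are admissible. One simplification you should make: the point you flag as the main obstacle --- that $\textup{gph}(\mathcal{N}_{\mathcal{Q}})$ as a function of $y$ is a finite union of polyhedral convex sets --- is not actually needed in your argument, because you fix $y=\bar y$ throughout; the set $\mathcal{N}_{\mathcal{Q}}(\bar y)$ is a single polyhedral convex cone, so $\textup{gph}(\Phi(\bar y,\cdot))$ is the preimage of a polyhedral convex set under an affine map and hence itself a single polyhedral convex set, and the face decomposition (with its closure issues) can be dropped entirely.
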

\begin{proof}
Define {the} subspace $\Xi_1\subseteq\X\times \Y$ and {the} polyhedral set $\Xi_2\subseteq\X\times \Y$
{by}
$$ \Xi_1 = \{(x,q)\in\X\times \Y\,\mid\, b - \mathcal{A}x = q\},\quad\quad \Xi_2 = \{(x,q)\in\X\times \Y\,\mid\, q\in\mathcal{N}_{\mathcal{Q}}(\bar{y})\}.$$
Denote
 $\widetilde{\mathcal{G}}_2:=\Xi_1\cap\Xi_2$, which is non-empty as $(\bar{x}, b-\mathcal{A}\bar{x})\in\widetilde{\mathcal{G}}_2$.
Since $\Xi_1$ and $\Xi_2$ are polyhedral sets, we know from Proposition \ref{prop:boundedlinear}
 that the collection $\{\Xi_1, \Xi_2\}$ is boundedly linearly regular.  Therefore, there exist a constant $\kappa>0$ and a neighborhood $\mathcal{U}$ of $\bar{x}$ such that
$$
\text{dist}((x,b-\mathcal{A}x), \widetilde{\mathcal{G}}_2)\leq
\kappa\big(\text{dist}((x,b-\mathcal{A}x),\Xi_1) +
\text{dist}((x,b-\mathcal{A}x),\Xi_2)\big) =  \kappa\;\text{dist}(b-\mathcal{A}x,\mathcal{N}_{\mathcal{Q}}(\bar{y})).
$$
Thus, by noting that there exists $(x',w')\in\widetilde{\mathcal{G}}_2$ such that
$$\text{dist}((x,b-\mathcal{A}x), \widetilde{\mathcal{G}}_2) = \sqrt{\|x - x'\|^2 + \|b-\mathcal{A}x - w'\|^2}\geq \|x-x'\|\geq
\text{dist}(x, \mathcal{G}_2(\bar{y})),
$$
we prove the conclusion of Lemma \ref{lemma:polyhdist}.
\end{proof}

The following result,    which
is partially motivated by the recent paper~\cite{zhou2015eb} and its  further development in~\cite{drusvyatskiy2016error}
for convex composite  optimization problems regularized by the nuclear norm function of rectangular matrices, provides a general approach for proving the metric subregularity of $\mathcal{T}_{\phi}$ associated with problem (\ref{eb:opt}) where the constraint $b - \mathcal{A}x\in\mathcal{Q}^\circ$ is present.
\begin{theorem}\label{prop:quadraticgrow}
Assume that $\mathcal{T}_l^{-1}(0)$  is non-empty.
Suppose that Assumption \ref{ass:f} holds and that there exists $(\bar{y}, \bar{w}, \bar{s})\in\mathcal{T}^{-1}_\psi(0)$ such that the collection of three sets $\{\overline{\mathcal{V}}, \mathcal{G}_1(\bar{y}),\mathcal{G}_2(\bar{y})\}$ is boundedly linearly regular.
 Then the  second order growth condition (\ref{defn:quadraticGrowth})
 holds at any  $\bar{x}\in\mathcal{T}_\phi^{-1}(0)$ with respect to the optimal solution set $\mathcal{T}^{-1}_{\phi}(0)$ {for}  problem (\ref{eb:opt}).
 \end{theorem}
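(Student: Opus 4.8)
The plan is to establish the second order growth condition (\ref{defn:quadraticGrowth}) at a fixed $\bar{x}\in\mathcal{T}_\phi^{-1}(0)$ by combining three separate "error bound" ingredients and then gluing them with bounded linear regularity. First I would fix $\bar{x}\in\mathcal{T}_\phi^{-1}(0)$ and pick the point $(\bar{y},\bar{w},\bar{s})\in\mathcal{T}_\psi^{-1}(0)$ for which $\{\overline{\mathcal{V}},\mathcal{G}_1(\bar{y}),\mathcal{G}_2(\bar{y})\}$ is boundedly linearly regular; by Proposition~\ref{prop:Omega} we have $\mathcal{T}_\phi^{-1}(0)=\overline{\mathcal{V}}\cap\mathcal{G}_1(\bar{y})\cap\mathcal{G}_2(\bar{y})$, so bounded linear regularity on a bounded neighborhood $\mathcal{U}$ of $\bar{x}$ gives a constant $\kappa_0$ with
\[
\textup{dist}^2(x,\mathcal{T}_\phi^{-1}(0))\leq \kappa_0\big(\textup{dist}^2(x,\overline{\mathcal{V}})+\textup{dist}^2(x,\mathcal{G}_1(\bar{y}))+\textup{dist}^2(x,\mathcal{G}_2(\bar{y}))\big),\quad\forall\,x\in\mathcal{U}.
\]
So it suffices to bound each of the three squared distances on the right by a constant multiple of $\theta(x)-\theta(\bar{x})$ for feasible $x$ near $\bar{x}$ (using also that $\theta(x)-\theta(\bar{x})\geq 0$ on the feasible set).

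The key decomposition I would use is to write, for any feasible $x$ (i.e. $b-\mathcal{A}x\in\mathcal{Q}^\circ$), using the KKT data $\bar{\zeta}=\mathcal{F}\bar{x}$, $\bar{\eta}=\mathcal{F}^*\nabla h(\bar{\zeta})+c$, and the multiplier $\bar{y}$:
\[
\theta(x)-\theta(\bar{x}) = \big(h(\mathcal{F}x)-h(\bar{\zeta})-\langle\nabla h(\bar{\zeta}),\mathcal{F}x-\bar{\zeta}\rangle\big) + \big(p(x)-p(\bar{x})-\langle\mathcal{A}^*\bar{y}-\bar{\eta},x-\bar{x}\rangle\big) + \langle\bar{y},\mathcal{A}x-\mathcal{A}\bar{x}\rangle,
\]
after substituting $\langle c,x\rangle$-terms and the identity $\langle\nabla h(\bar\zeta),\mathcal{F}x\rangle+\langle c,x\rangle=\langle\bar\eta,x\rangle$. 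Each bracket is nonnegative: the first by convexity of $h$, the second because $\mathcal{A}^*\bar{y}-\bar{\eta}\in\partial p(\bar{x})$ (from the KKT system), and the last term is handled by noting $\bar{y}\in\mathcal{Q}$, $b-\mathcal{A}x\in\mathcal{Q}^\circ$ and $\langle\bar{y},b-\mathcal{A}\bar{x}\rangle$-type complementarity, so $\langle\bar{y},\mathcal{A}x-\mathcal{A}\bar{x}\rangle\geq\langle\bar{y},\mathcal{A}x-b\rangle\geq -\textup{dist}$-type control via polyhedrality. Then: (1) Assumption~\ref{ass:f}(i) applied to $h$ at $\bar{\zeta}$ bounds the first bracket below by $\kappa_1\|\mathcal{F}x-\bar{\zeta}\|^2$, and since $\overline{\mathcal{V}}=\{x:\mathcal{F}x=\bar{\zeta}\}$ is an affine subspace, $\|\mathcal{F}x-\bar{\zeta}\|^2$ is comparable to $\textup{dist}^2(x,\overline{\mathcal{V}})$ (bounded below using $\mathcal{F}$ restricted to the orthogonal complement of $\ker\mathcal{F}$); (2) Proposition~\ref{pre:metricregular:thm} together with Assumption~\ref{ass:f}(ii) (which is exactly the inequality (\ref{pre:metricregular:ineq}) for $p$ with $v=\mathcal{A}^*\bar{y}-\bar{\eta}$) bounds the second bracket below by $\kappa_2\,\textup{dist}^2(x,(\partial p)^{-1}(\mathcal{A}^*\bar{y}-\bar{\eta}))=\kappa_2\,\textup{dist}^2(x,\mathcal{G}_1(\bar{y}))$; (3) for the constraint term, the polyhedrality of $\mathcal{Q}$ and Lemma~\ref{lemma:polyhdist} give $\textup{dist}(x,\mathcal{G}_2(\bar{y}))\leq\kappa\,\textup{dist}(b-\mathcal{A}x,\mathcal{N}_{\mathcal{Q}}(\bar{y}))$, and one shows $\langle\bar{y},\mathcal{A}x-\mathcal{A}\bar{x}\rangle\geq c'\,\textup{dist}^2(b-\mathcal{A}x,\mathcal{N}_{\mathcal{Q}}(\bar{y}))$ near $\bar{x}$ using that on the polyhedral cone $\mathcal{N}_{\mathcal{Q}}(\bar{y})$ the linear functional $\langle\bar{y},\cdot\rangle$ vanishes and is bounded away from zero (after normalization) on the "active" directions — this last step is the Hoffman-bound/polyhedral-geometry argument, and I would lean on Proposition~\ref{prop:polyhedral} and bounded linear regularity of the polyhedral pieces.

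The main obstacle I anticipate is step (3): controlling the cross term $\langle\bar{y},\mathcal{A}x-\mathcal{A}\bar{x}\rangle$ from below by a squared distance to $\mathcal{G}_2(\bar{y})$. The quantity $\textup{dist}(b-\mathcal{A}x,\mathcal{N}_{\mathcal{Q}}(\bar{y}))$ enters only linearly through Lemma~\ref{lemma:polyhdist}, whereas we need a \emph{quadratic} lower bound to match $\textup{dist}^2$. The resolution is the standard trick for polyhedral error bounds: decompose $\mathcal{A}x-\mathcal{A}\bar{x}$ (or $b-\mathcal{A}x$) relative to the face of $\mathcal{Q}^\circ$ containing $b-\mathcal{A}\bar{x}$ in its relative interior, separate the "feasible-boundary" component (which contributes linearly to $\langle\bar{y},\cdot\rangle$ with a strictly positive coefficient since $\bar{y}$ is in the relative interior of the corresponding normal cone piece, hence dominating the linear distance) from the "transverse" component (which is itself $O(\textup{dist}^2)$ when $x$ is feasible and close to $\bar{x}$, because feasibility forces it to be small). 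I would then argue that on the bounded neighborhood $\mathcal{U}$, for feasible $x$, $\langle\bar{y},\mathcal{A}x-\mathcal{A}\bar{x}\rangle + O(\textup{dist}^2(x,\overline{\mathcal{V}})+\textup{dist}^2(x,\mathcal{G}_1(\bar y)))\gtrsim\textup{dist}^2(x,\mathcal{G}_2(\bar{y}))$, absorbing the extra terms back into brackets (1) and (2). Assembling (1)–(3), summing, and invoking bounded linear regularity yields $\theta(x)-\theta(\bar{x})\geq\kappa\,\textup{dist}^2(x,\mathcal{T}_\phi^{-1}(0))$ on $\mathcal{U}$ intersected with the feasible set, which is exactly (\ref{defn:quadraticGrowth}). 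Since $\bar{x}$ was arbitrary in $\mathcal{T}_\phi^{-1}(0)$, the theorem follows.
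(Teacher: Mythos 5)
Your proposal is correct and follows essentially the same architecture as the paper's proof: the same three-way decomposition of $\theta(x)-\theta(\bar{x})$ (your identity checks out, since $\bar{\eta}-\mathcal{F}^*\nabla h(\bar\zeta)=c$ makes the linear terms collapse), Assumption~\ref{ass:f}(i) for the $h$-bracket with Hoffman's bound converting $\|\mathcal{F}x-\bar\zeta\|^2$ to $\textup{dist}^2(x,\overline{\mathcal{V}})$, Assumption~\ref{ass:f}(ii) for the $p$-bracket, Lemma~\ref{lemma:polyhdist} for $\mathcal{G}_2(\bar y)$, and bounded linear regularity to assemble the pieces. The one place you diverge is the step you correctly flag as the crux: the quadratic lower bound $\langle\bar{y},\mathcal{A}x-\mathcal{A}\bar{x}\rangle\geq\kappa_1'\,\textup{dist}^2(b-\mathcal{A}x,\mathcal{N}_{\mathcal{Q}}(\bar{y}))$ for feasible $x$ near $\bar{x}$. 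You propose to prove this by hand via a face decomposition of $\mathcal{Q}^\circ$; the paper gets it in two lines by noting that $\mathcal{N}_{\mathcal{Q}}$ is a polyhedral multifunction, so Proposition~\ref{prop:polyhedral} makes it locally upper Lipschitz, hence $\mathcal{N}_{\mathcal{Q}}^{-1}=\partial\delta_{\mathcal{Q}^\circ}$ is metrically subregular at $b-\mathcal{A}\bar{x}$ for $\bar{y}$, and Proposition~\ref{pre:metricregular:thm} converts this into exactly the quadratic growth inequality $\delta_{\mathcal{Q}^\circ}(b-\mathcal{A}x)\geq\delta_{\mathcal{Q}^\circ}(b-\mathcal{A}\bar{x})+\langle\bar{y},\mathcal{A}\bar{x}-\mathcal{A}x\rangle+\kappa_1'\,\textup{dist}^2(b-\mathcal{A}x,\mathcal{N}_{\mathcal{Q}}(\bar{y}))$, which for feasible $x$ is your inequality. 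Your hands-on route would also work but costs more effort; note too that your stated worry about Lemma~\ref{lemma:polyhdist} being ``only linear'' is not a real obstacle, since squaring a linear comparison between two distances is harmless --- the only genuinely quadratic input needed is the growth of $\delta_{\mathcal{Q}^\circ}$, which polyhedrality supplies.
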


\begin{proof}
	Let $\bar{x}\in\mathcal{T}_\phi^{-1}(0)$ be an arbitrary but fixed point.
From Assumption \ref{ass:f} (b), we know that
there exist $\kappa_1 >0$ and a neighborhood $\mathcal{U}$ of $\bar{x}$ such that
\begin{equation}\label{ineq:metricsub}
p(x)\geq p(\bar{x}) + \langle \mathcal{A}^*\bar{y} - \bar{\eta}, x - \bar{x}\rangle  + \kappa_1 \textup{dist}^2(x, (\partial p)^{-1}(\mathcal{A}^*\bar{y} - \bar{\eta})), \;\forall \,x\in\mathcal{U}.
\end{equation}
Note that $(b-\mathcal{A}\bar{x},\bar{y})\in\textup{gph}(\mathcal{N}_{\mathcal{Q}}^{-1})$ and  $\mathcal{N}_{\mathcal{Q}}(\cdot)$ is a multi-valued polyhedral function. Thus,  we can obtain from Proposition \ref{prop:polyhedral} that $\mathcal{N}_{\mathcal{Q}}(\cdot)$ is locally upper Lipschitz continuous, which further implies the metric subregularity of
$\mathcal{N}_{\mathcal{Q}}^{-1}$  at $b-\mathcal{A}\bar{x}$ for  $\bar{y}$ by definition.
{Now} by shrinking the neighborhood $\mathcal{U}$ if necessary, we know that   there exists a constant {$\kappa_1'>0$} such that
\begin{equation}\label{ineq:metricsub2}
\delta_{\mathcal{Q}^\circ}(b-\mathcal{A}x)\geq \delta_{\mathcal{Q}^\circ}(b - \mathcal{A}\bar{x}) + \langle \bar{y}, b - \mathcal{A}x - (b-\mathcal{A}\bar{x})\rangle  + \kappa'_1 \textup{dist}^2(b-\mathcal{A}x, \mathcal{N}_{\mathcal{Q}}(\bar{y})), \;\forall \,x\in\mathcal{U}.
\end{equation}
Moreover, the assumed bounded linear regularity of $\{\overline{\mathcal{V}}, \mathcal{G}_1(\bar{y}),\mathcal{G}_2(\bar{y})\}$ and  the result in Proposition \ref{prop:Omega} imply that there exist $\kappa_2>0$ and $\kappa_3 >0$, such that for any $x\in\mathcal{U}$,
\begin{equation}\label{ineq:boundedlinear}\begin{array}{ll}
\textup{dist}^2(x,  \mathcal{T}^{-1}_{\phi}(0)) &
= \textup{dist}^2\left(x, \overline{\mathcal{V}}\cap\mathcal{G}_1(\bar{y})\cap\mathcal{G}_2(\bar{y})\right)\\[2pt]
&  \leq
\kappa_2\big(\textup{dist}^2(x, \overline{\mathcal{V}}) + \textup{dist}^2(x, \mathcal{G}_1(\bar{y}))+ \textup{dist}^2(x, \mathcal{G}_2(\bar{y}))\big)\\[2pt]
& \leq  \kappa_3\big(\|\mathcal{F}x - \bar{\zeta}\|^2   + \textup{dist}^2(x, (\partial p)^{-1}(\mathcal{A}^*\bar{y} -\bar{\eta}))+\textup{dist}^2(b-\mathcal{A}x, \mathcal{N}_{\mathcal{Q}}(\bar{y}))\big),  \end{array}
\end{equation}
where in the last inequality, the first term  comes from Hoffman's error bound~\cite{hoffman1952approximate} and the third term comes from Lemma \ref{lemma:polyhdist}.
Then by Assumption \ref{ass:f} (b), shrinking $\mathcal{U}$ if necessary, we know that
 there exists $\kappa_4>0$ such that for any $x\in\mathcal{U}$,
\begin{equation}
\label{ineq:strongconvex}
\begin{array}{ll}
h(\mathcal{F}x) \geq  h(\bar{\zeta}) + \langle \nabla h(\bar{\zeta}), \mathcal{F}x - \bar{\zeta}\rangle  + \kappa_4\|\mathcal{F}x - \bar{\zeta}\|^2.
\end{array}
\end{equation}
Summing up the inequalities (\ref{ineq:metricsub}), (\ref{ineq:metricsub2}) and (\ref{ineq:strongconvex}) and  recalling that $\bar{\eta} = \mathcal{F}^*\nabla h(\mathcal{F}\bar{x}) + c$ in (\ref{defn:ubar}), we  know
{that}
for any $x\in \mathcal{U}\cap \{x\in\X\,\mid\,b- \mathcal{A}x\in\mathcal{Q}^\circ\}$,
$$
\begin{array}{rl}
\theta(x) = &\theta(x) + \delta_{\mathcal{Q}^\circ}(b-\mathcal{A}x)\\[2pt]
\geq
& \theta(\bar{x}) + \langle \mathcal{F}^*\nabla h(\bar{\zeta}) +c- \bar{\eta}, x - \bar{x}\rangle +\kappa_4\|\mathcal{F}x - \bar{\zeta}\|^2+ \kappa_1\textup{dist}^2(x, (\partial p)^{-1}(\mathcal{A}^*\bar{y} - \bar{\eta}))\\[2pt]
&+ \kappa'_1 \textup{dist}^2(b-\mathcal{A}x, \mathcal{N}_{\mathcal{Q}}(\bar{y}))\\[2pt]
\geq &  \theta(\bar{x})  + \min \{\kappa_1,\kappa_1', \kappa_4\}\left(\|\mathcal{F}x - \bar{\zeta}\|^2 + \textup{dist}^2(x, (\partial p)^{-1}(\mathcal{A}^*\bar{y} - \bar{\eta}))  + \textup{dist}^2(b-\mathcal{A}x, \mathcal{N}_{\mathcal{Q}}(\bar{y}))\right)\\[2pt]
= &  \theta(\bar{x}) + \kappa_3^{-1}\min \{\kappa_1, \kappa_1',\kappa_4\}\textup{dist}^2(x, \mathcal{T}^{-1}_{\phi}(0)),
\end{array}
$$
which shows that the second order growth condition (\ref{defn:quadraticGrowth}) holds at $\bar{x}$ with respect to  $\mathcal{T}^{-1}_{\phi}(0)$.
\end{proof}

\subsection{The metric subregularity of $\cT_{\phi}$ for SDP problems}

In this  subsection, we analyze  the metric subregularity of $\mathcal{T}_{\phi}$ associated with SDP problems 
{where}
in (\ref{eb:opt}),  $\X = \S^n$ and
$p(\cdot) = \delta_{\S_+^n}(\cdot)$.
The corresponding primal and dual forms now can be written, respectively, as
\begin{equation}\label{primalSDP}
\begin{array}{ll}
\min & h({\mathcal{F}}X) + \langle C,X\rangle + \delta_{\S_+^n}(X) \\[5pt]
\text{s.t.} & b-\mathcal{A}X\in\mathcal{Q}^{\circ}
\end{array}
\end{equation}
and
\begin{equation}\label{dualSDP}
\begin{array}{ll}
\min & \delta_{\mathcal{Q}}(y) -\langle b,y\rangle + h^{\ast}(-w) +\delta_{\S_+^n}(S)\\[5pt]
\text{s.t.} & \mathcal{A}^*y + {\mathcal{F}}^*w + S = C.
\end{array}
\end{equation}
In the following,
 we shall
 provide sufficient conditions to ensure that  assumptions made in Theorem \ref{prop:quadraticgrow} hold for the SDP problems.

Let $\overline{X}\in\S_+^n$ and $\overline{S}\in\S_+^n$ satisfy
$0\in\overline{X}+\partial \delta_{\S_+^n}(\overline{S})$, or equivalently, $\langle \overline{X},\overline{S}\rangle  = 0$.
 {Suppose} that $\overline{Z}:=\overline{X} - \overline{S}$ {has its} eigenvalues
 $\bar{\lambda}_1\geq \bar{\lambda}_2\geq \ldots\geq \bar{\lambda}_n$  being arranged in {a} non-increasing order.
Denote
\begin{equation}\label{defn:indices}
\alpha:=\{i \mid \bar{\lambda}_i>0, \; 1\leq i\leq n\},\;
\beta:=\{i \mid \bar{\lambda}_i=0, \; 1\leq i\leq n\},\;
\gamma:=\{i \mid \bar{\lambda}_i<0, \; 1\leq i\leq n\}.
\end{equation}
 Then
 there  exists an orthogonal matrix $\overline{P}\in\mathcal{O}^n$  such that
\begin{equation}\label{eig-decom}
\overline{Z} = \overline{P}\left(\begin{array}{ccc}
\overline{\Lambda}_\alpha & & \\
& 0 &\\
& & -\overline{\Lambda}_{\gamma}
\end{array}\right)\overline{P}^T, \quad
\overline{X}= \overline{P}\left(\begin{array}{ccc}
\overline{\Lambda}_\alpha & & \\
& 0 &\\
& & 0_{|\gamma|}
\end{array}\right)\overline{P}^T,
\quad  \overline{S}= \overline{P}\left(\begin{array}{ccc}
0_{|\alpha|} & & \\
& 0 &\\
& & \overline{\Lambda}_\gamma
\end{array}\right)\overline{P}^T,
\end{equation}
where
{$\overline{\Lambda}_\alpha = {\rm diag}(\bar{\lambda}_i \mid i\in \alpha) \succ 0$
 and $\overline{\Lambda}_\gamma = {\rm diag}(|\bar{\lambda}_j| \mid j\in \gamma) \succ 0$.}
Denote $\overline{P} = [\overline{P}_{\alpha}\; \overline{P}_{\beta}\; \overline{P}_\gamma]$ with $\overline{P}_\alpha\in\R^{n\times |\alpha|}$, $\overline{P}_\beta\in\R^{n\times |\beta|}$ and $\overline{P}_\gamma\in\R^{n\times |\gamma|}$.
Then we have
$$\left\{\begin{array}{rll}
\mathcal{T}_{\S_+^n}(\overline{X}) &= &\left\{H\in\S^n \,\mid\, [\overline{P}_{\beta}\;\overline{P}_{\gamma}]^TH\,[\overline{P}_{\beta}\;\overline{P}_{\gamma}]\succeq 0\right\}, \\[2pt] 
\mathcal{T}_{\S_+^n}(\overline{S}) &= &\left\{H\in\S^n \,\mid\, [\overline{P}_{\alpha}\;\overline{P}_{\beta}]^TH\,[\overline{P}_{\alpha}\;\overline{P}_{\beta}]\succeq 0\right\},\\[2pt]
\mathcal{N}_{\S_+^n}(\overline{X})  &= &\left\{H\in\S^n \,\mid\, [\overline{P}_{\beta}\;\overline{P}_{\gamma}]^TH\,[\overline{P}_{\beta}\;\overline{P}_{\gamma}]\preceq 0, \;\overline{P}_{\alpha}^TH\overline{P}=0\right\},\\
\mathcal{N}_{\S_+^n}(\overline{S}) & =& \left\{H\in\S^n \,\mid\, [\overline{P}_{\alpha}\;\overline{P}_{\beta}]^TH\,[\overline{P}_{\alpha}\;\overline{P}_{\beta}]\preceq 0, \;\overline{P}_{\gamma}^TH\overline{P}=0\right\}.
\end{array}\right.
$$
For the convenience of later discussions, we also denote
the critical cone of $\mathbb{S}_+^n$ at $\overline{S}$ associated with $\overline{X}$  as
$$\mathcal{C}_{\mathbb{S}_+^n}(\overline{S}, \overline{X}) := \mathcal{T}_{\mathbb{S}_+^n}(\overline{S})\cap \overline{X}^\perp = \{H\in\mathbb{S}^n\,\mid\, \overline{P}_{\alpha}^TH[\overline{P}_{\alpha}\;\overline{P}_{\beta}]=0, \;\overline{P}_{\beta}^TH\overline{P}_{\beta}\succeq 0\}
$$
and  the critical cone of $\mathbb{S}_+^n$ at $\overline{X}$ associated with $\overline{S}$ as
$$\mathcal{C}_{\mathbb{S}_+^n}(\overline{X}, \overline{S}) := \mathcal{T}_{\mathbb{S}_+^n}(\overline{X})\cap \overline{S}^\perp = \{H\in\mathbb{S}^n\,\mid\, [\overline{P}_{\beta}\;\overline{P}_{\gamma}]^TH\overline{P}_{\gamma}=0, \;\overline{P}_{\beta}^TH\overline{P}_{\beta}\succeq 0\}.
$$

By noting that $\partial \delta_{\S_+^n}(\overline{S}) = \mathcal{N}_{\S_+^n}(\overline{S})$, we immediate obtain the following results.

\begin{prop}\label{prop:boundedlinearsdp}
Let $\overline{S}\in\S_+^n$ and $0\in\overline{X}+\partial \delta_{\S_+^n}(\overline{S})$. Suppose that  $\overline{S}$ and $\overline{X}$  have the eigenvalue decompositions  as in (\ref{eig-decom}). Then it holds that:  \\[5pt]
(a) $\mathcal{N}_{\S_+^n}(\overline{S})$ is a polyhedral set if and only if $|\gamma| \geq n-1$; \\[5pt]
(b) $0\in \overline{X} + \textup{ri}\,(\mathcal{N}_{\S_+^n}(\overline{S}))$ if and only if $|\beta| = 0$, i.e., $\textup{rank}(\overline{X}) + \textup{rank}(\overline{S}) = n$.
\end{prop}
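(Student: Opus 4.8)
The plan is to reduce both statements to elementary facts about the cone $\S_+^k$ by means of the linear change of variables induced by $\overline{P}$. First I would record the bookkeeping lemma: using $\partial\delta_{\S_+^n}(\overline{S}) = \mathcal{N}_{\S_+^n}(\overline{S})$ and the explicit formula for $\mathcal{N}_{\S_+^n}(\overline{S})$ displayed above the proposition, one has
$$\mathcal{N}_{\S_+^n}(\overline{S}) = \Phi(-\S_+^{k}), \qquad k := |\alpha| + |\beta| = n - |\gamma|,$$
where $\Phi:\S^{k}\to\S^n$ is the injective linear map $M \mapsto \overline{P}\,\textup{diag}(M, 0_{|\gamma|})\,\overline{P}^T$. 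Since $\Phi$ is a linear isomorphism from $\S^k$ onto the subspace $\textup{Range}(\Phi)\subseteq\S^n$ (and $\mathcal{N}_{\S_+^n}(\overline{S})\subseteq\textup{Range}(\Phi)$), it carries extreme rays to extreme rays and, because it is a homeomorphism between the affine hulls, relative interiors to relative interiors; so all intrinsic convex-geometric features of $\mathcal{N}_{\S_+^n}(\overline{S})$ coincide with those of $-\S_+^k$. This is the one point I would set up carefully at the outset, and it is also the only place where any real care is needed.

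For part (a): polyhedrality is preserved by $\Phi$ and by a linear left-inverse of $\Phi$ (image of a polyhedral set under a linear map is polyhedral), so $\mathcal{N}_{\S_+^n}(\overline{S})$ is polyhedral iff $\S_+^k$ is. I would then invoke the classical fact that $\S_+^k$ is polyhedral iff $k \le 1$: for $k\in\{0,1\}$ it is $\{0\}$ or a half-line, hence polyhedral; for $k\ge 2$ the rank-one matrices $vv^T$, $v\in\R^k\setminus\{0\}$, furnish a continuum of pairwise distinct extreme rays, whereas a polyhedral cone has only finitely many — contradiction. Hence $\mathcal{N}_{\S_+^n}(\overline{S})$ is polyhedral iff $n - |\gamma| \le 1$, i.e. iff $|\gamma| \ge n-1$. (Alternatively one may argue the harder direction directly: if $|\gamma|\le n-2$, restrict to a $2\times 2$ principal block inside the index set $\alpha\cup\beta$ to exhibit inside $\mathcal{N}_{\S_+^n}(\overline{S})$ an isomorphic copy of $-\S_+^2$ cut out by a subspace, and use that the intersection of a polyhedral set with a subspace stays polyhedral.)

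For part (b): the hypothesis $0\in\overline{X} + \partial\delta_{\S_+^n}(\overline{S})$ gives $-\overline{X}\in\mathcal{N}_{\S_+^n}(\overline{S})$, and from (\ref{eig-decom}) one reads $-\overline{X} = \Phi(\textup{diag}(-\overline{\Lambda}_\alpha, 0_{|\beta|}))$. Thus, via the injectivity of $\Phi$ together with $\textup{ri}(-\S_+^k) = -\S_{++}^k$, the condition $0\in\overline{X} + \textup{ri}(\mathcal{N}_{\S_+^n}(\overline{S}))$ is equivalent to $\textup{diag}(-\overline{\Lambda}_\alpha, 0_{|\beta|}) \in -\S_{++}^k$, i.e. to $\textup{diag}(\overline{\Lambda}_\alpha, 0_{|\beta|})\succ 0$. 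Since $\overline{\Lambda}_\alpha\succ 0$, this holds iff the $|\beta|\times|\beta|$ zero block is vacuous, i.e. iff $|\beta| = 0$; and $|\beta| = 0$ means $|\alpha| + |\gamma| = n$, which by (\ref{eig-decom}) is exactly $\textup{rank}(\overline{X}) + \textup{rank}(\overline{S}) = n$. The main obstacle is really just the initial reduction lemma making $\Phi$ respect both ``polyhedral'' and ``relative interior''; the rest is the standard polyhedrality criterion for $\S_+^k$ and a direct inspection of the eigenvalue blocks of $\overline X$ and $\overline S$.
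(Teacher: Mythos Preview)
Your proposal is correct; the reduction via the linear isomorphism $\Phi$ identifying $\mathcal{N}_{\S_+^n}(\overline{S})$ with $-\S_+^{\,n-|\gamma|}$ is exactly what the explicit normal-cone formula displayed above the proposition encodes, and your treatment of parts (a) and (b) from there is sound. The paper itself offers no proof beyond pointing to that formula and calling the result immediate, so your write-up simply makes explicit the standard facts (polyhedrality of $\S_+^k$ iff $k\le 1$; $\textup{ri}(\S_+^k)=\S_{++}^k$ and preservation of relative interiors under injective linear maps) that the authors take for granted.
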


%
%
Next,  we shall prove the metric subregularity of $\partial \delta_{\S_+^n}(\cdot)$ and  $\partial \delta_{\S_- ^n}(\cdot)$, which is one of the key components in our subsequent analysis\footnote{This result is part of the first author's PhD thesis~\cite[Section 2.5.2]{cuiying2016}.}.

\begin{prop}\label{sdp:metricsub}
Let $\overline{S}\in\S_+^n$ and $0\in \overline{X} +  \partial \delta_{\S_+^n}(\overline{S})$.
 Then
$\partial \delta_{\S_+^n}(\cdot)$ is metrically subregular at $\overline{X}$ for $-\overline{S}$ and $\partial \delta_{\S_-^n}(\cdot)$ is metrically subregular at $-\overline{S}$ for $\overline{X}$.
\end{prop}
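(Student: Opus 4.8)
The plan is to derive both statements from Proposition \ref{pre:metricregular:thm}, reducing each to an explicit second-order growth inequality for the indicator $\delta_{\S_+^n}$ that can be checked by a direct computation in the eigenbasis $\overline{P}$ of $\overline{Z}=\overline{X}-\overline{S}$ from (\ref{eig-decom})--(\ref{defn:indices}). First I would observe that $(\overline{X},-\overline{S})\in\textup{gph}(\partial\delta_{\S_+^n})$, since $\overline{X},\overline{S}\in\S_+^n$ and $\langle\overline{X},\overline{S}\rangle=0$ give $-\overline{S}\in\mathcal{N}_{\S_+^n}(\overline{X})=\partial\delta_{\S_+^n}(\overline{X})$. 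By Proposition \ref{pre:metricregular:thm}, it then suffices to find $\kappa>0$ and a neighborhood $\mathcal{U}$ of $\overline{X}$ with
$$
\delta_{\S_+^n}(X)\ \ge\ \delta_{\S_+^n}(\overline{X})+\langle -\overline{S},\,X-\overline{X}\rangle+\kappa\,\textup{dist}^2\big(X,\,(\partial\delta_{\S_+^n})^{-1}(-\overline{S})\big),\qquad\forall\,X\in\mathcal{U}.
$$
Because $\delta_{\S_+^n}(\overline{X})=0$ and $\langle\overline{S},\overline{X}\rangle=0$, and the inequality is vacuous for $X\notin\S_+^n$, this collapses to showing $\langle\overline{S},X\rangle\ge\kappa\,\textup{dist}^2(X,F)$ for all $X\in\S_+^n\cap\mathcal{U}$, where $F:=(\partial\delta_{\S_+^n})^{-1}(-\overline{S})$.

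Next I would identify $F$ and the distance to it. For $X\succeq0$ one has $-\overline{S}\in\mathcal{N}_{\S_+^n}(X)\iff X\overline{S}=0\iff \overline{P}_\gamma^TX=0$; hence, writing $\widehat{X}:=\overline{P}^TX\overline{P}$ in $2\times2$ block form with blocks indexed by $\alpha\cup\beta$ and $\gamma$, the set $F$ is the face consisting of all $\overline{P}\begin{psmallmatrix} M & 0 \\ 0 & 0\end{psmallmatrix}\overline{P}^T$ with $M\succeq0$. By orthogonal invariance of $\|\cdot\|$ and the elementary projection formula for a $2\times 2$ block matrix onto block-diagonal matrices with PSD $(1,1)$-block, and using that the principal block $\widehat{X}_{(\alpha\cup\beta)(\alpha\cup\beta)}$ is already PSD, I get $\textup{dist}^2(X,F)=2\|\widehat{X}_{(\alpha\cup\beta)\gamma}\|^2+\|\widehat{X}_{\gamma\gamma}\|^2$, while $\langle\overline{S},X\rangle=\langle\overline{\Lambda}_\gamma,\widehat{X}_{\gamma\gamma}\rangle\ge\lambda_{\textup{min}}(\overline{\Lambda}_\gamma)\,\textup{tr}(\widehat{X}_{\gamma\gamma})$.

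The crux of the argument is then two elementary estimates that use $\widehat{X}\succeq0$ together with the proximity of $X$ to $\overline{X}$: (i) from the $2\times2$ principal minors, $\widehat{X}_{ij}^2\le\widehat{X}_{ii}\widehat{X}_{jj}$, so $\|\widehat{X}_{(\alpha\cup\beta)\gamma}\|^2\le\textup{tr}(\widehat{X}_{(\alpha\cup\beta)(\alpha\cup\beta)})\,\textup{tr}(\widehat{X}_{\gamma\gamma})\le\textup{tr}(X)\,\textup{tr}(\widehat{X}_{\gamma\gamma})$; and (ii) for a PSD matrix $\|\cdot\|\le\textup{tr}(\cdot)$, and since $\widehat{X}_{\gamma\gamma}=\overline{P}_\gamma^TX\overline{P}_\gamma\to\overline{P}_\gamma^T\overline{X}\overline{P}_\gamma=0$ as $X\to\overline{X}$, I may shrink $\mathcal{U}$ so that there $\textup{tr}(\widehat{X}_{\gamma\gamma})\le1$ and $\textup{tr}(X)\le\textup{tr}(\overline{X})+1$, giving $\|\widehat{X}_{\gamma\gamma}\|^2\le\textup{tr}(\widehat{X}_{\gamma\gamma})^2\le\textup{tr}(\widehat{X}_{\gamma\gamma})$. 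Combining, $\textup{dist}^2(X,F)\le\big(2\,\textup{tr}(\overline{X})+3\big)\,\textup{tr}(\widehat{X}_{\gamma\gamma})\le\lambda_{\textup{min}}(\overline{\Lambda}_\gamma)^{-1}\big(2\,\textup{tr}(\overline{X})+3\big)\,\langle\overline{S},X\rangle$, which yields the required inequality with $\kappa=\lambda_{\textup{min}}(\overline{\Lambda}_\gamma)\big(2\,\textup{tr}(\overline{X})+3\big)^{-1}$; the degenerate case $\gamma=\emptyset$ (i.e.\ $\overline{S}=0$) is immediate since then $F=\S_+^n$.

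Finally, for the second assertion I would not redo the computation but exploit symmetry: the hypothesis ``$\overline{S}\in\S_+^n$ and $0\in\overline{X}+\partial\delta_{\S_+^n}(\overline{S})$'' is symmetric in $\overline{X},\overline{S}$ (it just says $\overline{X},\overline{S}\succeq0$, $\langle\overline{X},\overline{S}\rangle=0$), and the linear isometry $Y\mapsto-Y$ conjugates $\partial\delta_{\S_-^n}$ into $\partial\delta_{\S_+^n}$ on both sides, so metric subregularity of $\partial\delta_{\S_-^n}$ at $-\overline{S}$ for $\overline{X}$ is equivalent to metric subregularity of $\partial\delta_{\S_+^n}$ at $\overline{S}$ for $-\overline{X}$, which is exactly the already-proven part with the roles of $\overline{X}$ and $\overline{S}$ interchanged. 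I expect the main obstacle to be getting the distance-to-the-face formula right -- i.e.\ verifying that the metric projection onto $F$ has the claimed block-diagonal form -- and the off-diagonal bound (i), which is precisely the place where positive semidefiniteness of $X$ is essential; everything else is routine.
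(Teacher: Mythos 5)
Your proposal is correct and follows essentially the same route as the paper's proof: both reduce the claim via Proposition \ref{pre:metricregular:thm} to a quadratic growth inequality for the indicator function, compute the distance to the relevant face explicitly in the eigenbasis $\overline{P}$, and bound the off-diagonal blocks by the complementarity term $\langle \overline{S},X\rangle$ using positive semidefiniteness. The only differences are cosmetic: you prove the $\S_+^n$ half directly and deduce the $\S_-^n$ half by the reflection $Y\mapsto -Y$ (the paper does the reverse and invokes ``similarly''), and your coarser $2\times 2$ partition with elementary $2\times 2$-minor bounds replaces the paper's Schur-complement and Bauer--Fike estimate of $\lambda_{\max}(\widetilde{S}_{\gamma\gamma})$, a mild simplification that changes nothing essential.
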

\begin{proof}
In the following, 	we shall prove the metric subregularity of  $\partial \delta_{\S_-^n}(\cdot)$  at $-\overline{S}$ for $\overline{X}$ and its counterpart regarding $\partial\delta_{\mathbb{S}_+^n}$ can be obtained similarly.
Without loss of generality, let  $\overline{X}$ and $\overline{S}$ have the eigenvalue decompositions as in (\ref{eig-decom}). According to Proposition \ref{pre:metricregular:thm},
in order to prove the {metric subregularity} of $\partial \delta_{\S_-^n}(\cdot)$  at $-\overline{S}$ for $\overline{X}$, it suffices to show that
 there exist a constant $\kappa>0$ and a neighborhood $\mathcal{U}$ of $\overline{S}$ such that for any $S\;\in\S_+^n\cap \mathcal{U}$,
\begin{equation}\label{pre:im:metric}
0\geq \langle \overline{X}, -S +\overline{S}\rangle + \kappa\,\textup{dist}^2(-S, (\partial\delta_{\S_-^n})^{-1}(\overline{X}))=\langle \overline{X}, -S +\overline{S}\rangle + \kappa\,\textup{dist}^2(-S, \mathcal{N}_{\S_+^n}(\overline{X})).
\end{equation}
If $|\alpha| = 0$, {then $\overline{X}=0$} and the inequality (\ref{pre:im:metric}) holds automatically for any $\kappa\geq 0$ and any neighborhood $\mathcal{U}$ of $\overline{S}$. Thus, we only need to consider the case that 
{$|\alpha|\not=0$.}
Since the case that $|\gamma|=0$ can be proved similarly as in the case for $|\gamma|\neq 0$, {we} only consider the latter case.
Set {$\rho := \frac{1}{2}\min\{1,|\bar{\lambda}_j| \mid j\in \gamma\}>0$}.
Let $S\in \S_+^n\cap\mathcal{B}_{\mathbb{S}^n}(\overline{S},\rho)$ be {arbitrarily} chosen.  We write $\widetilde{S} = \overline{P}^TS\overline{P}$
and decompose $\widetilde{S}$ into the following form:
$$
\widetilde{S}\equiv \left(\begin{array}{ccc}
\widetilde{S}_{\alpha\alpha} &\widetilde{S}_{\alpha\beta}  & \widetilde{S}_{\alpha\gamma}\\
\widetilde{S}_{\alpha\beta}^T &\widetilde{S}_{\beta\beta}  & \widetilde{S}_{\beta\gamma}\\
\widetilde{S}_{\alpha\gamma}^T &\widetilde{S}_{\beta\gamma}^T  & \widetilde{S}_{\gamma\gamma}
\end{array}\right).
$$
By the fact that $S\in\S_+^n$, we can easily check that
$$
\Pi_{\mathcal{N}_{\S_+^n}(\overline{X})}(-S)= -\overline{P}\left(\begin{array}{ccc}
0 & 0 & 0\\
0 &\widetilde{S}_{\beta\beta}  & \widetilde{S}_{\beta\gamma} \\
0 & \widetilde{S}_{\beta\gamma}^T   & \widetilde{S}_{\gamma\gamma} \end{array}\right)\overline{P}^T.
$$
Thus, we have
\begin{equation}\label{pre:metricregular:ineq00}
 \textup{dist}^2(-S, \mathcal{N}_{\S_+^n}(\overline{X}))
{=\norm{-S - \Pi_{\mathcal{N}_{\S_+^n}(\overline{X})}(-S)}^2 }
=   \|\widetilde{S}_{\alpha\alpha}\|^2+
2\|\widetilde{S}_{\alpha\beta}\|^2 + 2\|\widetilde{S}_{\alpha\gamma}\|^2.
\end{equation}
{Next we proceed to estimate $\norm{\widetilde{S}_{\alpha\alpha}},\norm{\widetilde{S}_{\alpha\beta}}$
and $\norm{\widetilde{S}_{\alpha\gamma}}$.}
By using the Bauer-Fike Theorem \cite{BauerFike60}, one obtains that for any $i=1,\dots,|\gamma|$,
{
$$
\begin{array}{ll}
{\rm dist}(\lambda_{i}(\widetilde{S}_{\gamma\gamma}), \{|\bar{\lambda}_j| \mid j\in \gamma\})
\leq  \|\widetilde{S}_{\gamma\gamma} -  \overline{\Lambda}_\gamma\|
=\norm{\overline{P}_\gamma^T S \overline{P}_\gamma - \overline{P}_\gamma^T\, \overline{S}\, \overline{P}_\gamma}
\leq \norm{S - \overline{S} } \leq \rho.
 \end{array}
$$
The above inequality further implies that
$0 < \lambda_{i}(\widetilde{S}_{\gamma\gamma})\leq
|\bar{\lambda}_{n}| + \rho
\leq  |\bar{\lambda}_{n}|+ \frac{1}{2}$ for all $i=1,\ldots,|\gamma|$. Thus,
$\widetilde{S}_{\gamma\gamma}$ is positive definite and
$ \lambda_{\max}(\widetilde{S}_{\gamma\gamma}) \leq  |\bar{\lambda}_{n}|+ \frac{1}{2}.$
Note that $\norm{\widetilde{S}_{\alpha\alpha}} \leq \rho$ and
$\norm{\widetilde{S}_{\beta\beta}} \leq \rho$ as $S\in\mathcal{B}_{\mathbb{S}^n}(\overline{S}, \rho)$.
}

{From the fact that $\widetilde{S}_{\alpha\alpha} - \widetilde{S}_{\alpha\gamma}\widetilde{S}_{\gamma\gamma}^{-1}\widetilde{S}^{T}_{\alpha\gamma} \succeq 0$
(because $\widetilde{S} \in\mathbb{S}_+^n$), we have
\begin{equation*}
\lambda_{\textup{max}}^{-1} (\widetilde{S}_{\gamma\gamma})\, \widetilde{S}_{\alpha\gamma}\widetilde{S}_{\alpha\gamma}^T
\;\preceq\;
\widetilde{S}_{\alpha\gamma}\widetilde{S}_{\gamma\gamma}^{-1}\widetilde{S}_{\alpha\gamma}^T
\;\preceq\; \widetilde{S}_{\alpha\alpha} .
\end{equation*}
Hence
\begin{equation} \label{pre:metricregular:ineq11}
\|\widetilde{S}_{\alpha\gamma}\|^2 = \text{tr}(\widetilde{S}_{\alpha\gamma}\widetilde{S}_{\alpha\gamma}^T)
\leq \text{tr}(\widetilde{S}_{\alpha\alpha})\;\lambda_{\textup{max}}(\widetilde{S}_{\gamma\gamma})
\;\leq\; \frac{|\bar{\lambda}_{n}|+ \frac{1}{2}}{\bar{\lambda}_{|\alpha|}}\langle\widetilde{S}_{\alpha\alpha}, \Lambda_\alpha\rangle.
\end{equation}
}
Moreover, we obtain from
$
\left(\begin{array}{cc}
\widetilde{S}_{\alpha\alpha} & \widetilde{S}_{\alpha\beta} \\
\widetilde{S}_{\alpha\beta}^T & \widetilde{S}_{\beta\beta}
\end{array}\right)\succeq 0
$ that
$$\widetilde{S}_{ij}^2\leq \widetilde{S}_{ii}\widetilde{S}_{jj}\leq \rho\widetilde{S}_{ii}\leq \frac{1}{2}\widetilde{S}_{ii} \leq \frac{1}{2\bar{\lambda}_{|\alpha|}}\bar{\lambda}_{i}\widetilde{S}_{ii}\,, \;\forall\, i\in\alpha,\, j\in\beta,$$
which implies that
\begin{equation}\label{pre:metricregular:ineq22}
\|\widetilde{S}_{\alpha\beta}\|^2 = \sum_{i\in\alpha, j\in\beta}\widetilde{S}_{ij}^2\leq \frac{|\beta|}{2\bar{\lambda}_{|\alpha|}}\langle \widetilde{S}_{\alpha\alpha}, \Lambda_{\alpha}\rangle.
\end{equation}
Let $\kappa:= \displaystyle\frac{\bar{\lambda}_{|\alpha|}}{2|\bar{\lambda}_{n}| + 3/2 + |\beta|}>0$. Then,
in view of (\ref{pre:metricregular:ineq00}), (\ref{pre:metricregular:ineq11}), (\ref{pre:metricregular:ineq22}) and
$$
\|\widetilde{S}_{\alpha\alpha}\|^2 \leq \rho\|\widetilde{S}_{\alpha\alpha}\|\leq \rho\;\text{tr}(\widetilde{S}_{\alpha\alpha})\leq\frac{1}{2\bar{\lambda}_{|\alpha|}}\langle \widetilde{S}_{\alpha\alpha} , \Lambda_{\alpha}\rangle,
$$
we obtain that for any $S\in \S_+^n\cap\mathcal{B}_{\mathbb{S}^n}(\overline{S},\rho)$,
$$
\begin{array}{ll}
&\quad \langle \overline{X}, -S+\overline{S}\rangle + \kappa\,\textup{dist}^2(-S, \mathcal{N}_{\S_+^n}(\overline{X}))
\;=\;
\langle \Lambda_{\alpha}, -\widetilde{S}_{\alpha\alpha}\rangle + \kappa\;(\|\widetilde{S}_{\alpha\alpha}\|^2 + 2\|\widetilde{S}_{\alpha\beta}\|^2 + 2\|\widetilde{S}_{\alpha\gamma}\|^2)
\\[8pt]
& \leq \langle \Lambda_{\alpha}, -\widetilde{S}_{\alpha\alpha}\rangle +   \kappa\left(\displaystyle\frac{1}{2\bar{\lambda}_{|\alpha|}}+ \frac{|\beta|}{\bar{\lambda}_{|\alpha|}}
+ \frac{2|\bar{\lambda}_{n}|+1}{\bar{\lambda}_{|\alpha|}}\right)\langle \widetilde{S}_{\alpha\alpha}, \Lambda_{\alpha}\rangle
=  \langle \Lambda_{\alpha},-\widetilde{S}_{\alpha\alpha}\rangle + \langle \widetilde{S}_{\alpha\alpha}, \Lambda_{\alpha}\rangle =   0.
\end{array}
$$
 Therefore, the inequality (\ref{pre:im:metric}) holds for any $S\in \S_+^n\cap\mathcal{B}_{\mathbb{S}^n}(\overline{S},\rho)$ and the proof is completed.
\end{proof}
Combining  Theorem \ref{prop:quadraticgrow} and Propositions \ref{prop:boundedlinearsdp} and \ref{sdp:metricsub}, we obtain the following result.
\begin{corollary}\label{coro:sdpTphi}
Suppose that  Assumption \ref{ass:f}\,(a) holds for problem (\ref{primalSDP}).
 Then $\mathcal{T}_{\phi}$ is {metrically} subregular at any ${X}\in\mathcal{T}_{\phi}^{-1}(0)$ for the origin under one of the following two conditions:\\[5pt]
  (i) there exists $(\bar{y}, \bar{w}, \overline{S})\in\mathcal{M}_{\phi}(\overline{X})$ such that $\textup{rank}(\overline{S})\geq n-1$;\\[5pt]
 (ii) there exist $\overline{X}\in\mathcal{T}_{\phi}^{-1}(0)$ and $(\bar{y}, \bar{w}, \overline{S})\in\mathcal{M}_{\phi}(\overline{X})$ such that $\textup{rank}(\overline{X}) + \textup{rank}(\overline{S}) = n$.
\end{corollary}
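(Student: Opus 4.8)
The plan is to verify, one at a time, that all the hypotheses of Theorem~\ref{prop:quadraticgrow} hold for problem~(\ref{primalSDP}) and then to pass from its conclusion to metric subregularity via Proposition~\ref{pre:metricregular:thm}. Since either of the conditions~(i),~(ii) postulates a dual multiplier, the KKT system~(\ref{kkt}) has a solution $(\bar y,\bar w,\overline S,\overline X)$, so $\mathcal{T}_l^{-1}(0)\neq\emptyset$. For Assumption~\ref{ass:f}: part~(a) is assumed directly, while part~(b) with $p=\delta_{\mathbb{S}_+^n}$ follows by combining Propositions~\ref{sdp:metricsub} and~\ref{pre:metricregular:thm}. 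Indeed, for any $(X,V)\in\textup{gph}(\partial\delta_{\mathbb{S}_+^n})$ one has $V\in\mathcal{N}_{\mathbb{S}_+^n}(X)$, hence $-V\in\mathbb{S}_+^n$ and $0\in X+\partial\delta_{\mathbb{S}_+^n}(-V)$; Proposition~\ref{sdp:metricsub} (applied with its "$\overline X$" being $X$ and its "$\overline S$" being $-V$) yields metric subregularity of $\partial\delta_{\mathbb{S}_+^n}$ at $X$ for $V$, and Proposition~\ref{pre:metricregular:thm} turns this into precisely the growth inequality required in Assumption~\ref{ass:f}(b).

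The crux is the bounded linear regularity of $\{\overline{\mathcal{V}},\mathcal{G}_1(\bar y),\mathcal{G}_2(\bar y)\}$ for a suitable dual optimal point $(\bar y,\bar w,\overline S)\in\mathcal{M}_\phi(\overline X)\subseteq\mathcal{T}_\psi^{-1}(0)$. Here $\overline{\mathcal{V}}$ is affine, and since $\mathcal{Q}$ is a polyhedral cone, $\mathcal{N}_{\mathcal{Q}}(\bar y)$ is polyhedral, hence so is its preimage $\mathcal{G}_2(\bar y)$ under the affine map $X\mapsto b-\mathcal{A}X$. For $\mathcal{G}_1(\bar y)$, I would first use the KKT relations $\bar w=-\nabla h(\mathcal{F}\overline X)$, $\overline S=C-\mathcal{A}^*\bar y-\mathcal{F}^*\bar w$ and the definition $\bar\eta=\mathcal{F}^*\nabla h(\mathcal{F}\overline X)+C$ to obtain $\mathcal{A}^*\bar y-\bar\eta=-\overline S$, so that $\mathcal{G}_1(\bar y)=(\partial\delta_{\mathbb{S}_+^n})^{-1}(-\overline S)=\mathbb{S}_+^n\cap\overline S^\perp=-\mathcal{N}_{\mathbb{S}_+^n}(\overline S)$, the face of $\mathbb{S}_+^n$ exposed by $\overline S$. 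Under condition~(i), $\textup{rank}(\overline S)\geq n-1$, so Proposition~\ref{prop:boundedlinearsdp}(a) makes $\mathcal{N}_{\mathbb{S}_+^n}(\overline S)$, hence $\mathcal{G}_1(\bar y)$, polyhedral; all three sets are then polyhedral and share the common point $\overline X\in\overline{\mathcal{V}}\cap\mathcal{G}_1(\bar y)\cap\mathcal{G}_2(\bar y)$ (Proposition~\ref{prop:Omega}), so Proposition~\ref{prop:boundedlinear} with $r=m=3$ gives bounded linear regularity. Under condition~(ii), $\textup{rank}(\overline X)+\textup{rank}(\overline S)=n$, so Proposition~\ref{prop:boundedlinearsdp}(b) gives $0\in\overline X+\textup{ri}(\mathcal{N}_{\mathbb{S}_+^n}(\overline S))$, i.e.\ $\overline X\in\textup{ri}(\mathcal{G}_1(\bar y))$; since also $\overline X\in\overline{\mathcal{V}}\cap\mathcal{G}_2(\bar y)$, the set $\overline{\mathcal{V}}\cap\mathcal{G}_2(\bar y)\cap\textup{ri}(\mathcal{G}_1(\bar y))$ is non-empty, and Proposition~\ref{prop:boundedlinear} applied with the two polyhedral sets $\overline{\mathcal{V}},\mathcal{G}_2(\bar y)$ again delivers bounded linear regularity.

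With all hypotheses in place, Theorem~\ref{prop:quadraticgrow} yields the second order growth condition~(\ref{defn:quadraticGrowth}) at every $X\in\mathcal{T}_\phi^{-1}(0)$ with respect to $\mathcal{T}_\phi^{-1}(0)$, and, as noted immediately after the statement of~(\ref{defn:quadraticGrowth}), Proposition~\ref{pre:metricregular:thm} shows this to be equivalent to the metric subregularity of $\mathcal{T}_\phi$ at $X$ for the origin, which is the assertion. I expect the only real obstacle to lie in the identification of $\mathcal{G}_1(\bar y)$: one has to chase the KKT relations to recognize it as the exposed face $\mathbb{S}_+^n\cap\overline S^\perp$, and then see that hypotheses~(i) and~(ii) are exactly the polyhedrality criterion of Proposition~\ref{prop:boundedlinearsdp}(a) and the relative-interior criterion of Proposition~\ref{prop:boundedlinearsdp}(b), so that the polyhedral/non-polyhedral partition required by Proposition~\ref{prop:boundedlinear} can be arranged in each case; everything else is a direct citation of the results above.
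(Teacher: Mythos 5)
Your proposal is correct and follows exactly the route the paper intends: it instantiates Theorem~\ref{prop:quadraticgrow} by supplying Assumption~\ref{ass:f}(b) via Propositions~\ref{sdp:metricsub} and~\ref{pre:metricregular:thm}, identifies $\mathcal{G}_1(\bar y)$ as the exposed face $\mathbb{S}_+^n\cap\overline{S}^\perp=-\mathcal{N}_{\mathbb{S}_+^n}(\overline{S})$ through the KKT relations, and obtains bounded linear regularity from Proposition~\ref{prop:boundedlinearsdp}(a) or (b) together with Proposition~\ref{prop:boundedlinear}, precisely the combination the paper cites (stating the corollary without further detail). Your write-up in fact makes explicit several steps the paper leaves implicit, and all of them check out.
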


\subsection{The metric subregularity of $\mathcal{T}_l$ for SDP problems}

In this subsection, we focus on the metric subregularity of $\mathcal{T}_l$ at a KKT point for the origin
 associated with the SDP problem (\ref{primalSDP}) and its dual
 (\ref{dualSDP}).
 Denote $\E:=\Y\times \W\times \S^n\times \S^n$.
  Consider a perturbed point $(u,V)\in\E$ with
 $u: = (u_1, u_2, U)\in\Y\times \W\times \S^n$ and $V\in\S^n$. Then
 $(y,w,S,X)\in\mathcal{T}_l^{-1}(u,V)$ if and only if $(y,w,S,X)\in\E$ solves the following perturbed KKT system:
  \begin{equation}\label{pertkkt}
\left\{\begin{array}{ll}
u_1\in -b + \mathcal{A}{X} +\mathcal{N}_{\mathcal{Q}}(y),\quad
u_2\in \mathcal{F}X - \partial h^*(-w),\quad
U\in X + \mathcal{N}_{\S_+^n}(S),\\[5pt]
V = C - (\mathcal{A}^*y + \mathcal{F}^*w + S),
\end{array}\right. \quad (y,w,S,X)\in\E.
\end{equation}


 We know from Corollary \ref{coro:sdpTphi} that for problem  (\ref{primalSDP}), if there exists a KKT point satisfying the partial strict complementarity condition as in (ii) of Corollary \ref{coro:sdpTphi},  then
 $\mathcal{T}_\phi$ is metrically subregular at any $\overline{X}\in\mathcal{T}_{\phi}^{-1}(0)$ for the origin. Naturally
one may ask  whether a  corresponding result can be extended to $\mathcal{T}_l$ under the same assumptions. The answer is unfortunately negative, as can be seen from the following example.

\begin{example}\label{ex:1}
Consider the following SDP problem:
\begin{equation}\label{ex1}
\begin{array}{ll}
\min \Big\{  X_{22} + \delta_{\S_+^2}(X)  
\mid  2X_{12} - X_{22}  = 0 \Big\}
\end{array}
\end{equation}
and its dual (in its equivalent minimization format):
\begin{equation}\label{ex:dual}
\begin{array}{ll}
\min \Big\{ \delta_{\S_+^2}(S) \;\mid\;
S_{11} = 0,\; y+S_{12} =0,\; -y+S_{22} = 1 \Big\}.
\end{array}
\end{equation}
It is easy to check that {the sets of optimal solutions to (\ref{ex1}) and (\ref{ex:dual}) are given, respectively, by}
$$
\mathcal{T}_{\phi}^{-1}(0) = \left\{X\in\S_+^2\,\mid\,  X_{11}\geq 0,\;X_{22} = 0\right\},\;
\mathcal{T}_{\psi}^{-1}(0) = \left\{(y,S)\in\R\times \S_+^2\,\mid\, y = 0, \; S_{11} =  0, \;S_{22}= 1
\right\}.
$$
In addition, for any $(\bar{y},\overline{S},\overline{X})\in\mathcal{T}_l^{-1}(0) = \mathcal{T}_{\psi}^{-1}(0)\times \mathcal{T}_{\phi}^{-1}(0)$ with $\overline{X}_{11}>0$, it holds that
$\textup{rank}(\overline{S}) = 1$ and
$\textup{rank}(\overline{X}) + \textup{rank}(\overline{S}) = 2$. Therefore, we know from Corollary \ref{coro:sdpTphi} that
 $\mathcal{T}_{\phi}$ is metrically subregular at any $\overline{X}\in\mathcal{T}_{\phi}^{-1}(0)$ for the origin.
However, $\mathcal{T}_l$ fails to be metrically subregular at $(\bar{y},\overline{S},\overline{X}')\in\mathcal{T}_l^{-1}(0)$ with $\overline{X}'_{11} =0$. This can be seen as follows: for any $\epsilon>0$,
consider the perturbed points $u(\epsilon) := \left(0, \; \begin{pmatrix}
\epsilon & \epsilon \\  \epsilon & 2\epsilon
 \end{pmatrix}\right)\in\R\times \S^2
 $ and $V(\epsilon) := \begin{pmatrix}
-\epsilon & 0 \\   0 & 0
 \end{pmatrix}\in\S^2$.
  Then
one can {show} from   (\ref{pertkkt}) that
$$
(y(\epsilon), S(\epsilon),X(\epsilon)) := \left(
\sqrt{\epsilon},\;
\begin{pmatrix}
\epsilon & -\sqrt{\epsilon} \\
-\sqrt{\epsilon} & 1+\sqrt{\epsilon}
\end{pmatrix},\;\begin{pmatrix}
\epsilon & \epsilon \\
\epsilon & 2\epsilon
\end{pmatrix}
\right)
\in\mathcal{T}_l^{-1}(u(\epsilon), V(\epsilon)).$$
Also one can readily verify that $$\begin{array}{cc}
\|(u(\epsilon), V(\epsilon))\| = 2\sqrt{2}\epsilon, \; \textup{dist}(X(\epsilon), \mathcal{T}_\phi^{-1}(0)) = \left\|
\begin{pmatrix}
0 & \epsilon \\
\epsilon &  2\epsilon
\end{pmatrix}
\right\| = \sqrt{6}\epsilon,\\[5pt]
\textup{dist}((y(\epsilon), S(\epsilon)), \mathcal{T}_\psi^{-1}(0)) = \left( (\sqrt{\epsilon})^2 + \left\|
\begin{pmatrix}
\epsilon & -\sqrt{\epsilon} \\
-\sqrt{\epsilon} &  \sqrt{\epsilon}
\end{pmatrix}
\right\|^2 \right)^{1/2}\geq 2\sqrt{\epsilon}.
\end{array}
$$
Thus, there cannot exist a constant $\kappa\geq 0$ along with a neighborhood $\mathcal{U}$ of $(\bar{y}, \overline{S},\overline{X}')$ such that
$$
\textup{dist}((y, S,X), \mathcal{T}_l^{-1}(0)) \leq \kappa \|(u, V)\|,\quad \forall \; (y,S,X)\in\mathcal{T}_l^{-1}(u, V)\cap\mathcal{U}.$$
\end{example}
Example \ref{ex:1} shows that very different from the case for $\mathcal{T}_{\phi}$, the operator $\mathcal{T}_l$ may fail to be  metrically subregular at a KKT point for the origin
under either condition (i) or condition (ii) in Corollary \ref{coro:sdpTphi}. Therefore, additional conditions must be imposed in order to guarantee the metric subregularity of $\mathcal{T}_l$.
For this purpose, we shall first introduce a second order sufficient condition for problem  (\ref{dualSDP}).
The following 
assumption on $h^*$ is made throughout this {subsection.}
\begin{assumption}\label{assump:hstar}
The function
$h^*$ is  continuously differentiable  on $\textup{dom}(h^*)$ and its gradient is
directionally differentiable at every point in  $\textup{dom}(h^*)$.
\end{assumption}
Let $(\bar{y}, \bar{w}, \overline{S}, \overline{X})\in\mathcal{T}_l^{-1}(0)$.
The critical cone of problem (\ref{dualSDP}) at $(\bar{y}, \bar{w}, \overline{S})$  is defined by
$$\begin{array}{ll}
\mathcal{C}_{\psi}(\bar{y}, \bar{w}, \overline{S}) :=  &\bigg\{(d_y,d_w,d_S)\in\mathbb{Y}\times \mathbb{W}\times \mathbb{S}^n\,\mid\, \mathcal{A}^*d_y + \mathcal{F}^*d_w + d_S = 0,\;d_y\in\mathcal{T}_{\mathcal{Q}}(\bar{y}),\; d_S\in\mathcal{T}_{\mathbb{S}_+^n}(\overline{S}), \\[5pt]
& \qquad \qquad \qquad\qquad\qquad\qquad\quad \langle -b, d_y\rangle +\langle \nabla h^*(-\bar{w}), d_w\rangle = 0\bigg\}.
\end{array}
$$
For any given $S \in \mathbb{S}^n$, define the linear-quadratic function $\Upsilon_S: \mathbb{S}^n \times  \mathbb{S}^n \to \mathbb{R}$, which is
linear in the first argument and quadratic in the second argument, by
$$\Upsilon_S(\Gamma,D):=2 \langle \Gamma,DS^\dagger D\rangle,\; \forall\,(\Gamma,D)\in\mathbb{S}^n\times \mathbb{S}^n,
$$
where $S^\dagger$ is the Moore-Penrose pseudo-inverse of $S$.
For  problem (\ref{dualSDP}), we say {that} the  second order sufficient condition
 holds at $(\bar{y}, \bar{w}, \overline{S})$ with respect to the multiplier $\overline{X}\in\mathcal{M}_\psi(\bar{y}, \bar{w}, \overline{S})$  if
\begin{equation}\label{sosc:dual}
\langle d_w,  (\nabla h^*)'(-\bar{w};d_w)\rangle + \Upsilon_{\overline{S}}(\overline{X}, d_S) > 0, \;\forall\, 0\neq (d_y,d_w, d_S)\in\mathcal{C}_{\psi}(\bar{y}, \bar{w},\overline{S}).
\end{equation}
Here we only {deal with} the inequality (\ref{sosc:dual}) at
a particular point $\overline{X}\in\mathcal{M}_{\psi}(\bar{y}, \bar{w}, \overline{S})$
instead of taking {``sup"} over the  set $\mathcal{M}_{\psi}(\bar{y}, \bar{w}, \overline{S})$ of multipliers,  where the latter one is adopted in~\cite[Theorem 3.86]{bonnans2013perturbation}. Thus, it is a more 
{restrictive}
condition even if $h$ is assumed to be twice continuously differentiable.

The primary motivation to employ the second order sufficient condition (\ref{sosc:dual}) for studying the metric subregularity  comes from the  recent advances on  the complete characterization of the metric subregularity of $\mathcal{T}_l$ for the convex QSDP problem,  when the KKT solution point is
assumed to be unique~\cite{han2015linear}.
Define the KKT mapping $\mathcal{R}: \E\to\E$ associated with problem (\ref{dualSDP}) as
$$
\mathcal{R}(y,w,S,X):=\left(\begin{array}{cc}
y-\Pi_{\mathcal{Q}}(y + b-\mathcal{A}X)\\[2pt]
\mathcal{F}X + \nabla h^*(-{w}) \\[2pt]
S - \Pi_{\S_+^n}(S-X)\\[2pt]
C - (\mathcal{A}^*y + \mathcal{F}^*w + S)
\end{array}
\right),\;\forall \, (y,w,S,X)\in\mathbb{E}.
$$
It has been shown in~\cite[Theorem 5.1]{han2015linear} that for linear  and least squares SDP problems,
when $(\bar{y}, \bar{w}, \overline{S}, \overline{X})\in\mathbb{E}$ is the unique solution to $\mathcal{R}({y},{w}, {S}, {X}) = 0$, then
$\mathcal{R}$ is metrically subregular at $(\bar{y}, \bar{w}, \overline{S}, \overline{X})$ for the origin if and only if both the primal and dual second order sufficient conditions hold\footnote{In fact, this characterization of metric subregularity for $\mathcal{T}_l$ is also true for the convex QSDP problem even if  its least squares representation is not explicitly 
 available computationally. For details, see~\cite{han2015linear}.}.
On the other hand, Proposition \ref{prop:equivmetric} says that
the metric subregularity of $\mathcal{R}$  at $(\bar{y}, \bar{w}, \overline{S}, \overline{X})$ for the origin is equivalent to the metric subregularity of $\mathcal{T}_l$  at $(\bar{y}, \bar{w}, \overline{S}, \overline{X})$ for the origin. Thus, we consider the second order sufficient condition (\ref{sosc:dual}) for problem (\ref{dualSDP}) when the solution set to problem (\ref{primalSDP}) is not  a singleton.
We need two perturbation properties, one  on the SDP cone and the other on the  polyhedral cone,  before stating our main result on the metric subregularity of $\mathcal{T}_l$.

\begin{prop}\label{relationSX}
Let $\overline{S}\in\S_+^n$ and $0\in \overline{X} +  \partial \delta_{\S_+^n}(\overline{S})$. Suppose that $\overline{X}$ and $\overline{S}$ have the eigenvalue decompositions as in (\ref{eig-decom}).
Then for all $(X,S)\in\S^n\times \S^n$ satisfying $0\in X + \partial\delta_{\S_+^n}(S)$ {and is}
 sufficiently close to $(\overline{X},\overline{S})\in\S^n\times \S^n$, we have
\begin{equation}\label{prop:tildex}
\left\{\begin{array}{ll}
\widetilde X_{\alpha\alpha} = \overline{\Lambda}_{\alpha}  + O(\|\Delta X\|), \quad \widetilde X_{\alpha\beta} =  O(\|\Delta X\|),\quad
\widetilde X_{\alpha\gamma} = O(\min\{\|\Delta X\|, \|\Delta S\|\}),\\[5pt]
\widetilde X_{\beta\beta} = O(\|\Delta X\|), \quad  \widetilde X_{\beta\gamma} = O(\|\Delta X\|\|\Delta S\|),\quad \widetilde X_{\gamma\gamma} = O(\|\Delta X\|\|\Delta S\|),\\[5pt]
\widetilde S_{\alpha\alpha} = O(\|\Delta X\|\|\Delta S\|), \quad \widetilde S_{\alpha\beta} =  O(\|\Delta X\|\|\Delta S\|),\quad
\widetilde S_{\alpha\gamma} = O(\min\{\|\Delta X\|, \|\Delta S\|\}),\\[5pt]
\widetilde S_{\beta\beta} = O(\|\Delta S\|), \quad \widetilde S_{\beta\gamma} = O(\|\Delta S\|),\quad \widetilde S_{\gamma\gamma} ={\overline{\Lambda}_{\gamma}} + O(\|\Delta S\|),
\end{array}\right.
\end{equation}
\begin{equation}\label{prop:sigmaterm}
\widetilde{S}_{\alpha\gamma}  + \overline{\Lambda}^{-1}_{\alpha}\widetilde{X}_{\alpha\gamma}\overline{\Lambda}_{\gamma} = O(\|\Delta X\|\|\Delta S\|),\\[3pt]
\end{equation}
\begin{equation}\label{prop:betabeta}
\langle \widetilde{X}_{\beta\beta}, \widetilde{S}_{\beta\beta}\rangle  =
\left\{\begin{array}{ll}
O(\|\Delta X\|\|\Delta S\|)(\|\Delta X\| + \|\Delta S\|) & \textup{if}\quad |\alpha|>0, \\[5pt]
O(\|\Delta X\|\|\Delta S\|^2) &  \textup{if}\quad |\alpha|=0,
\end{array}\right.
\end{equation}
where $\Delta X := X - \overline{X}$, $\Delta S := S - \overline{S}$,
$
\widetilde{X}:= \overline{P}^TX\overline{P}$ and  $\widetilde{S}:= \overline{P}^TS\overline{P}.
$
\end{prop}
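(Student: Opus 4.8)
\textbf{Proof strategy for Proposition \ref{relationSX}.}
The plan is to work entirely in the coordinate system given by the eigenvectors $\overline{P}$, so that $\widetilde{X} = \overline{P}^T X\overline{P}$ and $\widetilde{S} = \overline{P}^T S\overline{P}$ replace $X$ and $S$, and the complementarity condition $0\in X + \partial\delta_{\mathbb{S}_+^n}(S)$ becomes the statement that $\widetilde{X}\succeq 0$, $\widetilde{S}\succeq 0$ and $\widetilde{X}\widetilde{S} = 0$. Set $\widetilde{X} = \overline{P}^T\overline{X}\,\overline{P} + \overline{P}^T\Delta X\,\overline{P}$ and likewise for $\widetilde{S}$, so each block $\widetilde{X}_{ab}$ differs from the corresponding block of the (block-diagonal) limiting matrix by an $O(\|\Delta X\|)$ perturbation; this already gives the ``crude'' entries $\widetilde{X}_{\alpha\alpha} = \overline{\Lambda}_\alpha + O(\|\Delta X\|)$, $\widetilde{X}_{\alpha\beta},\widetilde{X}_{\beta\beta} = O(\|\Delta X\|)$, $\widetilde{S}_{\beta\beta},\widetilde{S}_{\beta\gamma} = O(\|\Delta S\|)$, $\widetilde{S}_{\gamma\gamma} = \overline{\Lambda}_\gamma + O(\|\Delta S\|)$, and the symmetric statements, just from continuity. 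The nontrivial content is the \emph{quadratic} smallness of certain blocks, and this is where the positive semidefiniteness together with the exact complementarity $\widetilde{X}\widetilde{S}=0$ must be exploited.

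The key mechanism is the following: first, for $(X,S)$ close to $(\overline X,\overline S)$, the blocks $\widetilde{X}_{\alpha\alpha}$ and $\widetilde{S}_{\gamma\gamma}$ are positive definite and uniformly bounded away from $0$ (by Weyl's inequality / Bauer--Fike, exactly as in the proof of Proposition \ref{sdp:metricsub}), so they are invertible with uniformly bounded inverses. From $\widetilde{S}\succeq 0$ and the Schur complement with respect to the invertible block $\widetilde{S}_{\gamma\gamma}$ we get $\widetilde{S}_{\alpha\alpha}\succeq \widetilde{S}_{\alpha\gamma}\widetilde{S}_{\gamma\gamma}^{-1}\widetilde{S}_{\alpha\gamma}^T$, hence $\|\widetilde{S}_{\alpha\gamma}\|^2 \leq \lambda_{\max}(\widetilde{S}_{\gamma\gamma})\,\mathrm{tr}(\widetilde{S}_{\alpha\alpha})$; symmetrically $\|\widetilde{X}_{\alpha\gamma}\|^2 \leq \lambda_{\max}(\widetilde{X}_{\alpha\alpha})\,\mathrm{tr}(\widetilde{X}_{\gamma\gamma})$. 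The complementarity $\widetilde{X}\widetilde{S}=0$ then feeds back: reading off the $(\alpha,\alpha)$, $(\gamma,\gamma)$, $(\alpha,\gamma)$ and $(\beta,\cdot)$ blocks of the matrix product $\widetilde{X}\widetilde{S} = 0$, and using invertibility of $\widetilde{X}_{\alpha\alpha}$ and $\widetilde{S}_{\gamma\gamma}$ to solve, one expresses $\widetilde{S}_{\alpha\alpha}, \widetilde{S}_{\alpha\beta}$ in terms of products of an $X$-type small block with an $S$-type small block, and dually $\widetilde{X}_{\beta\gamma}, \widetilde{X}_{\gamma\gamma}$ in terms of such products; bootstrapping these estimates against the Schur-complement bounds above upgrades the $O(\|\Delta S\|)$ (resp.\ $O(\|\Delta X\|)$) bounds on $\widetilde{S}_{\alpha\alpha},\widetilde{S}_{\alpha\beta}$ (resp.\ $\widetilde{X}_{\gamma\gamma},\widetilde{X}_{\beta\gamma}$) to the claimed $O(\|\Delta X\|\|\Delta S\|)$. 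The mixed bound $\widetilde{X}_{\alpha\gamma}, \widetilde{S}_{\alpha\gamma} = O(\min\{\|\Delta X\|,\|\Delta S\|\})$ follows because each of these blocks is controlled separately by $\|\Delta X\|$ (it sits inside $\widetilde{X}$) and by $\|\Delta S\|$ (via the Schur bound $\|\widetilde{X}_{\alpha\gamma}\|^2\lesssim \mathrm{tr}(\widetilde{X}_{\gamma\gamma})$, and $\widetilde{X}_{\gamma\gamma}$ is already $O(\|\Delta X\|\|\Delta S\|)$, and symmetrically).

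For (\ref{prop:sigmaterm}), the idea is that the $(\alpha,\gamma)$ block of the complementarity equation $\widetilde{X}\widetilde{S}=0$ reads $\widetilde{X}_{\alpha\alpha}\widetilde{S}_{\alpha\gamma} + \widetilde{X}_{\alpha\beta}\widetilde{S}_{\beta\gamma} + \widetilde{X}_{\alpha\gamma}\widetilde{S}_{\gamma\gamma} = 0$; since $\widetilde{X}_{\alpha\alpha} = \overline{\Lambda}_\alpha + O(\|\Delta X\|)$ and $\widetilde{S}_{\gamma\gamma} = \overline{\Lambda}_\gamma + O(\|\Delta S\|)$, and the cross term $\widetilde{X}_{\alpha\beta}\widetilde{S}_{\beta\gamma}$ is $O(\|\Delta X\|\|\Delta S\|)$, we obtain $\overline{\Lambda}_\alpha\widetilde{S}_{\alpha\gamma} + \widetilde{X}_{\alpha\gamma}\overline{\Lambda}_\gamma = O(\|\Delta X\|\|\Delta S\|) + O(\|\Delta X\|)\|\widetilde{S}_{\alpha\gamma}\| + O(\|\Delta S\|)\|\widetilde{X}_{\alpha\gamma}\|$, and since $\|\widetilde{S}_{\alpha\gamma}\|,\|\widetilde{X}_{\alpha\gamma}\| = O(\min\{\|\Delta X\|,\|\Delta S\|\}) = O((\|\Delta X\|\|\Delta S\|)^{1/2})$ the last two terms are also $O(\|\Delta X\|\|\Delta S\|)$; left-multiplying by $\overline{\Lambda}_\alpha^{-1}$ gives (\ref{prop:sigmaterm}). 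For (\ref{prop:betabeta}), use $\langle\widetilde{X}_{\beta\beta},\widetilde{S}_{\beta\beta}\rangle = \langle\widetilde{X},\widetilde{S}\rangle - (\text{other block pairings})$; since $\langle\widetilde{X},\widetilde{S}\rangle = \langle X,S\rangle = 0$ by complementarity, this is minus the sum of the other $\langle\widetilde{X}_{ab},\widetilde{S}_{ab}\rangle$ terms, each of which we bound using the block estimates just obtained — e.g.\ $\langle\widetilde{X}_{\alpha\alpha},\widetilde{S}_{\alpha\alpha}\rangle = O(1)\cdot O(\|\Delta X\|\|\Delta S\|)$, $\langle\widetilde{X}_{\alpha\gamma},\widetilde{S}_{\alpha\gamma}\rangle = O(\min\{\|\Delta X\|,\|\Delta S\|\})^2 = O(\|\Delta X\|\|\Delta S\|)$, $\langle\widetilde{X}_{\beta\gamma},\widetilde{S}_{\beta\gamma}\rangle = O(\|\Delta X\|\|\Delta S\|)\cdot O(\|\Delta S\|)$, etc.; collecting the dominant contributions gives the stated $O(\|\Delta X\|\|\Delta S\|)(\|\Delta X\|+\|\Delta S\|)$ in general, and when $|\alpha|=0$ the $\alpha$-blocks vanish and one is left with the sharper $O(\|\Delta X\|\|\Delta S\|^2)$. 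The main obstacle I anticipate is the careful bookkeeping in the bootstrap: one must set up the hierarchy of block estimates in the right order (first the crude linear bounds and the definiteness of $\widetilde{X}_{\alpha\alpha},\widetilde{S}_{\gamma\gamma}$, then the Schur-complement bounds, then solve the complementarity equation block by block, then iterate once) so that no estimate is used before it is proved, and one must check that all implied constants are uniform in a fixed neighborhood — this is routine but delicate, and is the kind of argument where the actual matrix-product expansions of $\widetilde{X}\widetilde{S}=0$ have to be written out explicitly rather than sketched.
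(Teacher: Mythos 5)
Your overall strategy --- translating the complementarity condition into $\widetilde{X}\succeq 0$, $\widetilde{S}\succeq 0$, $\widetilde{X}\widetilde{S}=0$ and bootstrapping block estimates of the matrix equation against positive semidefiniteness --- is genuinely different from the paper's proof, which instead simultaneously diagonalizes $X$ and $S$ by a common orthogonal matrix $\widetilde{P}$ and invokes the eigenvector perturbation lemma of Sun--Sun to estimate the blocks of $\widetilde{P}$. Your route is more elementary and, for (\ref{prop:tildex}) and (\ref{prop:sigmaterm}), it does work: from the crude $O(\|\Delta X\|)$ and $O(\|\Delta S\|)$ bounds and the invertibility of $\widetilde{X}_{\alpha\alpha}$ and $\widetilde{S}_{\gamma\gamma}$, the $(\alpha,\alpha)$, $(\alpha,\beta)$, $(\beta,\gamma)$, $(\gamma,\gamma)$ blocks of $\widetilde{X}\widetilde{S}=0$ deliver all the $O(\|\Delta X\|\|\Delta S\|)$ estimates in a single pass, and the $(\alpha,\gamma)$ block gives both (\ref{prop:sigmaterm}) and the $O(\min)$ bounds. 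One small correction there: your stated route to $\widetilde{X}_{\alpha\gamma}=O(\min\{\|\Delta X\|,\|\Delta S\|\})$ via the Schur bound $\|\widetilde{X}_{\alpha\gamma}\|^2\lesssim\operatorname{tr}(\widetilde{X}_{\gamma\gamma})=O(\|\Delta X\|\|\Delta S\|)$ only yields $O\big((\|\Delta X\|\|\Delta S\|)^{1/2}\big)$, which is weaker than $O(\min)$; you should instead solve the $(\alpha,\gamma)$ block $\widetilde{X}_{\alpha\gamma}=-(\widetilde{X}_{\alpha\alpha}\widetilde{S}_{\alpha\gamma}+\widetilde{X}_{\alpha\beta}\widetilde{S}_{\beta\gamma})\widetilde{S}_{\gamma\gamma}^{-1}=O(\|\Delta S\|)$ and intersect with the crude $O(\|\Delta X\|)$ bound.

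The genuine gap is in your argument for (\ref{prop:betabeta}). You propose to write $\langle\widetilde{X}_{\beta\beta},\widetilde{S}_{\beta\beta}\rangle$ as $\langle\widetilde{X},\widetilde{S}\rangle$ minus the other block pairings, but by your own estimates the terms $\langle\widetilde{X}_{\alpha\alpha},\widetilde{S}_{\alpha\alpha}\rangle$ and $\langle\widetilde{X}_{\alpha\gamma},\widetilde{S}_{\alpha\gamma}\rangle$ are only $O(\|\Delta X\|\|\Delta S\|)$, so ``collecting the dominant contributions'' yields $O(\|\Delta X\|\|\Delta S\|)$, not the claimed cubic bound $O(\|\Delta X\|\|\Delta S\|)(\|\Delta X\|+\|\Delta S\|)$; these quadratic terms do not cancel for free, and indeed $\langle\widetilde{X}_{\alpha\gamma},\widetilde{S}_{\alpha\gamma}\rangle\approx-\langle\overline{\Lambda}_\alpha^{-1}\widetilde{X}_{\alpha\gamma}\overline{\Lambda}_\gamma,\widetilde{X}_{\alpha\gamma}\rangle$ is genuinely of order $\|\Delta X\|\|\Delta S\|$ when $\|\Delta X\|\asymp\|\Delta S\|$. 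The fix stays within your framework but requires a different identity: take the trace of the $(\beta,\beta)$ block of $\widetilde{X}\widetilde{S}=0$, namely $\langle\widetilde{X}_{\alpha\beta},\widetilde{S}_{\alpha\beta}\rangle+\langle\widetilde{X}_{\beta\beta},\widetilde{S}_{\beta\beta}\rangle+\langle\widetilde{X}_{\beta\gamma},\widetilde{S}_{\beta\gamma}\rangle=0$, so that $\langle\widetilde{X}_{\beta\beta},\widetilde{S}_{\beta\beta}\rangle=-\langle\widetilde{X}_{\alpha\beta},\widetilde{S}_{\alpha\beta}\rangle-\langle\widetilde{X}_{\beta\gamma},\widetilde{S}_{\beta\gamma}\rangle=O(\|\Delta X\|)\cdot O(\|\Delta X\|\|\Delta S\|)+O(\|\Delta X\|\|\Delta S\|)\cdot O(\|\Delta S\|)$, which is exactly the claimed bound, and which degenerates to $O(\|\Delta X\|\|\Delta S\|^2)$ when $|\alpha|=0$. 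With these two repairs your proof is complete and, in my view, cleaner than the paper's, since it avoids the eigenvector perturbation machinery entirely.
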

\begin{proof}
 Let $\mu_1 > \ldots>{\mu_{r}} >0$ and $0< \nu_{_1} < \ldots <{\nu_{s}}$ be all the distinct eigenvalues of $\overline{X}$ and $\overline{S}$, respectively.
Denote
$$\alpha_{i}:=\{\,k\in\alpha \mid\bar{\lambda}_k = \mu_{i}\},\;i=1,\ldots, {r},\quad\quad
\gamma_j:=\{\,k\in\gamma\mid\bar{\lambda}_k= -\nu_{{j}}\}, \; j=1,\ldots,{s}.
$$
It is easy to see that for all $\Delta X$ and $\Delta S$ sufficiently small, there exists $\widetilde{P}\in\mathcal{O}^n$ such that
$$
\widetilde{X} = \widetilde{P}
\begin{pmatrix}
{\Lambda}_{\alpha} & & \\
& \Lambda_{\beta} & \\
&& 0_{|\gamma|}
\end{pmatrix}
\widetilde{P}^T,\quad\quad  \widetilde{S} = \widetilde{P}
\begin{pmatrix}
{0}_{|\alpha|} & & \\
& \Lambda'_{\beta} & \\
&& {\Lambda_{\gamma}}
\end{pmatrix}
\widetilde{P}^T,
$$
where $\Lambda_{\alpha}\succ 0$, $\Lambda_{\beta}\succeq 0$, $\Lambda'_{\beta}\succeq 0$, $\Lambda_{\gamma}\succ 0$ and
$\langle \Lambda_{\beta}, \Lambda_{\beta}'\rangle =0$.
From~\cite[Lemma 4.12]{sun2002semismooth},  for all  $\Delta X$ and $\Delta S$ sufficiently small, there exist $\Theta_{\alpha_i}\in\mathcal{O}^{|\alpha_i|}, \; i=1, \ldots,r$, $\Theta'\in\mathcal{O}^{|\beta| + |\gamma|}$,  $\Theta_{\gamma_i}\in\mathcal{O}^{|\gamma_i|}, \; i=1, \ldots,s$ and  $\Theta''\in\mathcal{O}^{|\alpha| + |\beta|}$ such that
\begin{equation}\label{tildeP}
\widetilde{P} =
\begin{pmatrix}
\Theta_{\alpha} & \\
& \Theta'
\end{pmatrix}
+ O(\|\Delta X\|)
 =
\begin{pmatrix}
\Theta{''} &\\
&\Theta_{\gamma}
\end{pmatrix} + O(\|\Delta S\|),
\end{equation}
where $\Theta_{\alpha}\in\mathcal{O}^{|\alpha|}$ and $\Theta_{\gamma}\in\mathcal{O}^{|\gamma|}$ are block-diagonal orthogonal matrices given by
$$\Theta_{\alpha} := \begin{pmatrix}
\Theta_{\alpha_1} &&\\
& \ddots &\\
&&\Theta_{\alpha_{r}}\\
\end{pmatrix}, \quad \quad
\Theta_{\gamma} := \begin{pmatrix}
\Theta_{\gamma_1} &&\\
& \ddots &\\
&&\Theta_{\gamma_{s}}
\end{pmatrix}.
$${
Note that
\begin{equation} \label{Q1}
\Theta_\alpha^T \overline{\Lambda}_\alpha \Theta_\alpha =  \overline{\Lambda}_\alpha, \quad
\Theta_\gamma^T \overline{\Lambda}_\gamma \Theta_\gamma =  \overline{\Lambda}_\gamma.
\end{equation}
By using (\ref{tildeP}) and the fact that for any $N
\in\R^{|\beta|\times |\beta|}$,
$$ NN^T = I_{|\beta|} + O(\|\Delta X\|+\|\Delta S\|) \Longrightarrow
\exists\; \widehat{N}\in\mathcal{O}^{|\beta|}\;\text{such that}\; \widehat{N} = N + O(\|\Delta X\|+\|\Delta S\|),
$$
we further obtain from (\ref{defn:tildeP}) that there exists $\Theta_{\beta}\in\mathcal{O}^{|\beta|}$ such that
\begin{equation}\label{defn:tildeP}
\widetilde{P}
=
\begin{pmatrix}
\Theta_{\alpha} + O(\|\Delta X\|) & O(\|\Delta X\|) & \widetilde{P}_{\alpha\gamma}\\[5pt]
O(\|\Delta X\|) & \Theta_{\beta} + O(\|\Delta X\| + \|\Delta S\|)  & O(\|\Delta S\|)\\[5pt]
\widetilde{P}_{\gamma\alpha} & O(\|\Delta S\|) &     \Theta_{\gamma} + O(\|\Delta S\|)
\end{pmatrix}
\end{equation}
with
\begin{equation}\label{defn:widePalphagamma}
\widetilde{P}_{\alpha\gamma} = O(\min\{\|\Delta X\|, \|\Delta S\|\}) ,\quad \quad \widetilde{P}_{\gamma\alpha} = O(\min\{\|\Delta X\|, \|\Delta S\|\}).
\end{equation}
It then follows from (\ref{defn:tildeP}), (\ref{defn:widePalphagamma}) and the orthogonality of
 $\widetilde{P}$  that for all $\Delta X$ and $\Delta S$ sufficiently small,
\begin{equation}\label{sigmaterm2}
\begin{array}{ll}
 0 & = \widetilde{P}_{\alpha\alpha}\widetilde{P}_{\gamma\alpha}^T + \widetilde{P}_{\alpha\beta}\widetilde{P}_{\gamma\beta}^T+ \widetilde{P}_{\alpha\gamma}\widetilde{P}_{\gamma\gamma}^T \\[5pt]
 & = (\Theta_{\alpha} + O(\|\Delta X\|))\widetilde{P}_{\gamma\alpha}^T + O(\|\Delta X\|\|\Delta S\|) + \widetilde{P}_{\alpha\gamma}(\Theta_{\gamma}^T + O(\|\Delta S\|))\\[5pt]
 & = \Theta_{\alpha}\widetilde{P}_{\gamma\alpha}^T + \widetilde{P}_{\alpha\gamma} \Theta^T_{\gamma}+ O(\|\Delta X\|\|\Delta S\|),\\[5pt]
 \end{array}
\end{equation}
\begin{equation}\label{betabeta2}
\left\{\begin{array}{ll}
\widetilde{P}^T_{\beta\beta}\widetilde{P}_{\beta\alpha} = -\widetilde{P}_{\alpha\beta}^T\widetilde{P}_{\alpha\alpha} - \widetilde{P}_{\gamma\beta}^T\widetilde{P}_{\gamma\alpha}  = O(\|\Delta S\|\|\Delta X\|) + O(\|\Delta X\|),\\[5pt]
\widetilde{P}^T_{\beta\beta}\widetilde{P}_{\beta\beta} =I_{|\beta|}-\widetilde{P}_{\alpha\beta}^T\widetilde{P}_{\alpha\beta} - \widetilde{P}_{\gamma\beta}^T\widetilde{P}_{\gamma\beta }  = \left\{\begin{array}{ll} I_{|\beta|} + O(\|\Delta X\|^2+\|\Delta S\|^2) & \textup{if}\quad |\alpha|>0,\\[5pt]
I_{|\beta|} + O(\|\Delta S\|^2) & \textup{if}\quad |\alpha|=0,\end{array}\right.
\\[5pt]
\widetilde{P}^T_{\beta\gamma}\widetilde{P}_{\beta\alpha} = O(\|\Delta X\|\|\Delta S\|),\\[5pt]
\widetilde{P}^T_{\beta\gamma}\widetilde{P}_{\beta\beta} = -\widetilde{P}_{\alpha\gamma}^T\widetilde{P}_{\alpha\beta} - \widetilde{P}_{\gamma\gamma}^T\widetilde{P}_{\gamma\beta}  = \left\{\begin{array}{ll}
O(\|\Delta X\|\|\Delta S\|) + O(\|\Delta S\|)& \textup{if}\quad |\alpha|>0,\\[5pt]
 O(\|\Delta S\|) & \textup{if}\quad |\alpha|=0.
\end{array}\right.
\end{array}\right.
\end{equation}
}
By using (\ref{tildeP})-(\ref{defn:widePalphagamma}), the definitions of $\widetilde{X}$ and $\widetilde{S}$ and the Bauer-Fike Theorem \cite{BauerFike60},
we obtain that for all $\Delta X$ and $\Delta S$ sufficiently small,
\begin{equation}\label{tildex}
\left\{\begin{array}{ll}
\widetilde X_{\alpha\alpha} 
 =\overline{\Lambda}_{\alpha} + O(\|\Delta X\|), \quad \widetilde X_{\alpha\beta} =  O(\|\Delta X\|),\\[5pt]
\widetilde X_{\alpha\gamma}
 =  \overline{\Lambda}_{\alpha}\Theta_{\alpha}\widetilde{P}_{\gamma\alpha}^T + O(\|\Delta X\|\|\Delta S\|) = O(\min\{\|\Delta X\|, \|\Delta S\|\}),\\[5pt]
\widetilde X_{\beta\beta} = O(\|\Delta X\|), \quad  \widetilde X_{\beta\gamma} = O(\|\Delta X\|\|\Delta S\|),\quad \widetilde X_{\gamma\gamma} = O(\|\Delta X\|\|\Delta S\|),
\end{array}\right.
\end{equation}
\begin{equation}\label{tildeS}
\left\{\begin{array}{ll}
\widetilde S_{\alpha\alpha} = O(\|\Delta X\|\|\Delta S\|), \quad \widetilde S_{\alpha\beta} =  O(\|\Delta X\|\|\Delta S\|), \\[5pt]
\widetilde S_{\alpha\gamma} 
 ={\widetilde{P}_{\alpha\gamma}\Theta_\gamma^T}\overline{\Lambda}_{\gamma}
 + O(\|\Delta X\|\|\Delta S\|) = O(\min\{\|\Delta X\|, \|\Delta S\|\}),\\[5pt]
 \widetilde S_{\beta\beta} = O(\|\Delta S\|),\quad \widetilde S_{\beta\gamma} = O(\|\Delta S\|),\quad \widetilde S_{\gamma\gamma}
  =\overline{\Lambda}_{\gamma} + O(\|\Delta S\|),
\end{array}\right.
\end{equation}
which
show that (\ref{prop:tildex}) holds.

Next, we shall  prove (\ref{prop:sigmaterm}) and (\ref{prop:betabeta}).
In view of (\ref{sigmaterm2}), (\ref{tildex}) and  (\ref{tildeS}),  we know that for all $\Delta X$ and $\Delta S$ sufficiently small,
$$
\widetilde S_{\alpha\gamma}  = {\widetilde{P}_{\alpha\gamma} \Theta_{\gamma}^T\overline{\Lambda}_{\gamma}}
+ O(\|\Delta X\|\|\Delta S\|)
 = -\Theta_{\alpha}\widetilde{P}_{\gamma\alpha}^T \overline{\Lambda}_{\gamma}+ O(\|\Delta X\|\|\Delta S\|)= 
-\overline{\Lambda}_{\alpha}^{-1}\widetilde{X}_{\alpha\gamma} \overline{\Lambda}_{\gamma}+ O(\|\Delta X\|\|\Delta S\|).
$$
Finally,
we conclude from
$\Lambda_{\beta} = O(\|\Delta X\|)$, $\Lambda_{\beta}' = O(\|\Delta S\|)$, $\langle \Lambda_{\beta},\Lambda_{\beta}'\rangle  = 0$ and  (\ref{betabeta2}) that for all $\Delta X$ and $\Delta S$ sufficiently small and $|\alpha|>0$,
$$
\begin{array}{ll}
&
\langle \widetilde{X}_{\beta\beta}, \widetilde{S}_{\beta\beta}\rangle
 \;=\;  \langle \widetilde{P}_{\beta\alpha}\Lambda_{\alpha}\widetilde{P}_{\beta\alpha}^T + \widetilde{P}_{\beta\beta}\Lambda_{\beta}\widetilde{P}_{\beta\beta}^T,\, \widetilde{P}_{\beta\beta}\Lambda_{\beta}'\widetilde{P}_{\beta\beta}^T + \widetilde{P}_{\beta\gamma}\Lambda_{\gamma}\widetilde{P}_{\beta\gamma}^T\rangle
\\[8pt]
= &\langle \widetilde{P}^T_{\beta\beta}\widetilde{P}_{\beta\alpha}\Lambda_{\alpha},\, \Lambda_{\beta}'\widetilde{P}^T_{\beta\beta}\widetilde{P}_{\beta\alpha}\rangle + \langle \widetilde{P}^T_{\beta\beta}\widetilde{P}_{\beta\beta}\Lambda_{\beta},\, \Lambda_{\beta}'\widetilde{P}^T_{\beta\beta}\widetilde{P}_{\beta\beta}\rangle   + \langle \widetilde{P}_{\beta\gamma}^T\widetilde{P}_{\beta\alpha}\Lambda_{\alpha},\, \Lambda_{\gamma}\widetilde{P}^T_{\beta\gamma}\widetilde{P}_{\beta\alpha}\rangle \\[5pt]
& +\langle \widetilde{P}^T_{\beta\gamma}\widetilde{P}_{\beta\beta}\Lambda_{\beta},\, \Lambda_{\gamma}\widetilde{P}^T_{\beta\gamma}\widetilde{P}_{\beta\beta}\rangle
\\[8pt]
= & O(\|\Delta X\|^2\|\Delta S\|) + O(\|\Delta X\|\|\Delta S\|)(\|\Delta X\|^2 + \|\Delta S\|^2) + O(\|\Delta X\|^2\|\Delta S\|^2) + O(\|\Delta X\|\|\Delta S\|^2)\\[5pt]
= & O(\|\Delta X\|\|\Delta S\|)(\|\Delta X\| + \|\Delta S\|)
\end{array}
$$
or for all $\Delta X$ and $\Delta S$ sufficiently small and $|\alpha|=0$,
$$
\langle \widetilde{X}_{\beta\beta}, \widetilde{S}_{\beta\beta}\rangle
=  \langle \widetilde{P}^T_{\beta\beta}\widetilde{P}_{\beta\beta}\Lambda_{\beta}, \Lambda_{\beta}'\widetilde{P}^T_{\beta\beta}\widetilde{P}_{\beta\beta}\rangle
+\langle \widetilde{P}^T_{\beta\gamma}\widetilde{P}_{\beta\beta}\Lambda_{\beta}, \Lambda_{\gamma}\widetilde{P}^T_{\beta\gamma}\widetilde{P}_{\beta\beta}\rangle= O(\|\Delta X\|\|\Delta S\|^2).
$$
This completes the proof of the proposition.
\end{proof}


Now we shall focus on the convex polyhedral cone $\mathcal{Q}$. Without loss of generality, let $\mathbb{Y} := \mathbb{R}^m$.
Let $(\bar{y}, \bar{q})\in\mathbb{R}^m\times\mathbb{R}^m$ satisfy $\bar{q}\in\mathcal{N}_{\mathcal{Q}}(\bar{y})$. We denote the critical cone of $\mathcal{Q}$ at $\bar{y}$ associated with $\bar{q}$ and the critical cone of $\mathcal{Q}^\circ$ at $\bar{q}$ associated with $\bar{y}$ as
$$\mathcal{C}_{\mathcal{Q}}(\bar{y},\bar{q}):= \mathcal{T}_{\mathcal{Q}}(\bar{y})\cap \bar{q}^\perp, \quad\quad\mathcal{C}_{\mathcal{Q}^\circ}(\bar{q},\bar{y}):= \mathcal{T}_{\mathcal{Q}^\circ}(\bar{q})\cap \bar{y}^\perp.
$$
It is easy to check  the following relation:
\begin{equation}\label{eq:polycritical}
(\mathcal{C}_{\mathcal{Q}}(\bar{y}, \bar{q}) )^\circ= \mathcal{C}_{\mathcal{Q}^\circ}(\bar{q}, \bar{y}).
\end{equation}
For subsequent discussions, we write the convex polyhedral cone $\mathcal{C}_{\mathcal{Q}}(\bar{y},\bar{q})$ as
\begin{equation}\label{eq:defnB}
\mathcal{C}_{\mathcal{Q}}(\bar{y},\bar{q}) =  \{y\in\mathbb{R}^m\,\mid\, Qy\leq 0\},
\end{equation}
where $Q$ is  some matrix in $\mathbb{R}^{r\times m}$. 
Define a collection of index sets
\begin{equation}\label{defn:IB}
\mathcal{I}_{Q}:=\{a\subseteq\{1,2,\ldots, r\}\,\mid\, \exists\;
y\in\mathbb{R}^m\; \textup{satisfying} \;{Q}_iy = 0,\; \forall\, i\in a\;\text{and}\;{Q}_iy < 0,\; \forall\, i\notin a
\}
\end{equation}
{where $Q_i$ denotes the $i$th row of $Q$.}
Moreover, for each $a\in\mathcal{I}_{Q}$, define a subspace $L_a\in\mathbb{R}^m$ by
\begin{equation}\label{defn:LI}
L_a:=\{y\in\mathbb{R}^m\,\mid\, Q_iy = 0, \;\forall\, i\in a\}.
\end{equation}

\begin{prop}\label{prop:polycritical}
Let $(\bar{y}, \bar{q})\in\mathbb{R}^m\times \mathbb{R}^m$ satisfy $\bar{q}\in\mathcal{N}_{\mathcal{Q}}(\bar{y})$ and suppose that $\mathcal{C}_{\mathcal{Q}}(\bar{y},\bar{q})$ has the form (\ref{eq:defnB}).
Then
 for any
$(y,q)\in\mathbb{R}^m\times \mathbb{R}^m$ satisfying ${q}\in\mathcal{N}_{\mathcal{Q}}({y})$
{and is}
 sufficiently close to $(\bar{y}, \bar{q})$, there exists  $a\in\mathcal{I}_{{Q}}$ such that
$$\mathcal{C}_{\mathcal{Q}}(\bar{y},\bar{q})\cap L_a\ni (y - \bar{y})\perp (q-\bar{q}) \in \mathcal{C}_{\mathcal{Q}^\circ}(\bar{q},\bar{y})\cap L_a^\perp.
$$
\end{prop}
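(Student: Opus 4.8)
\emph{Proof proposal.} The plan is to decompose the argument into a ``localization'' step, which reduces the relation $q\in\mathcal N_{\mathcal Q}(y)$ near $(\bar y,\bar q)$ to a relation on the critical cone, followed by an elementary index-set bookkeeping inside the polyhedral description $\mathcal C_{\mathcal Q}(\bar y,\bar q)=\{y\mid Qy\le 0\}$ from (\ref{eq:defnB}).

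For the localization, I would use that $\mathcal Q$ is a convex polyhedral cone, so that $\mathcal N_{\mathcal Q}$ is a polyhedral multifunction; by the well-known reduction property of polyhedral normal-cone mappings (see, e.g., \cite{robinson1981some,dontchev2009implicit}), the graph of $\mathcal N_{\mathcal Q}$ coincides, on a neighbourhood of $(\bar y,\bar q)$, with $(\bar y,\bar q)+\textup{gph}\,\mathcal N_{K}$, where $K:=\mathcal C_{\mathcal Q}(\bar y,\bar q)$. Hence, for $(y,q)$ sufficiently close to $(\bar y,\bar q)$ with $q\in\mathcal N_{\mathcal Q}(y)$, setting $d:=y-\bar y$ and $e:=q-\bar q$ gives $e\in\mathcal N_K(d)$, which (as $K$ is a convex cone) means $d\in K$, $e\in K^\circ$ and $\langle d,e\rangle=0$. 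By (\ref{eq:polycritical}) we have $K^\circ=\mathcal C_{\mathcal Q^\circ}(\bar q,\bar y)$, so in fact $d\in\mathcal C_{\mathcal Q}(\bar y,\bar q)$, $e\in\mathcal C_{\mathcal Q^\circ}(\bar q,\bar y)$ and $d\perp e$.

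Next I would exhibit the required index set. Define $a:=\{i\in\{1,\dots,r\}\mid Q_id=0\}$. Since $d\in K$ we have $Q_id\le 0$ for every $i$, hence $Q_id<0$ for every $i\notin a$; thus $d$ itself certifies that $a\in\mathcal I_Q$ in the sense of (\ref{defn:IB}), and $d\in L_a$ by (\ref{defn:LI}), so $d\in\mathcal C_{\mathcal Q}(\bar y,\bar q)\cap L_a$. It remains to verify $e\in L_a^\perp$. By Farkas' lemma the polar of the polyhedral cone $K=\{y\mid Q_iy\le 0,\ i=1,\dots,r\}$ is $K^\circ=\{\sum_{i=1}^r\lambda_iQ_i^T\mid \lambda_i\ge 0\}$, so we may write $e=\sum_{i=1}^r\lambda_iQ_i^T$ with all $\lambda_i\ge 0$. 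From $0=\langle d,e\rangle=\sum_{i=1}^r\lambda_i(Q_id)$, which is a sum of nonpositive terms, we get $\lambda_i(Q_id)=0$ for each $i$; for $i\notin a$ we have $Q_id<0$ and therefore $\lambda_i=0$. Consequently $e=\sum_{i\in a}\lambda_iQ_i^T\in\textup{span}\{Q_i^T\mid i\in a\}=L_a^\perp$, and combined with $e\in\mathcal C_{\mathcal Q^\circ}(\bar q,\bar y)$ and $d\perp e$ this establishes the proposition.

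I expect the delicate point to be the localization step, i.e. the passage from $q\in\mathcal N_{\mathcal Q}(y)$ to $(q-\bar q)\in\mathcal N_{K}(y-\bar y)$ on a neighbourhood of $(\bar y,\bar q)$. If one prefers a self-contained argument rather than quoting the reduction property, one writes $\mathcal Q=\{y\mid\langle b_i,y\rangle\le 0\}$, lets $J$ be the active set of $\mathcal Q$ at $\bar y$, fixes a representation $\bar q=\sum_{i\in J}\bar\mu_ib_i$ with $\bar\mu\ge 0$, and must show that for $q$ close to $\bar q$ the complementarity relation $q\in\mathcal N_{\mathcal Q}(y)$ forces the constraints indexed by $\{i:\bar\mu_i>0\}$ to remain active at $y$; this ``persistence of active constraints'' is precisely where polyhedrality (closedness of the finitely many subcones of $\mathcal Q^\circ$ generated by subsets of $\{b_i\}$) enters, and making it rigorous — in particular, choosing $\bar\mu$ so the argument does not break when $\bar q$ admits several multiplier representations — is the main technical obstacle.
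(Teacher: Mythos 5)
Your proposal is correct, and its first half coincides with the paper's: the paper also localizes by writing $q\in\mathcal{N}_{\mathcal{Q}}(y)$ as $y=\Pi_{\mathcal{Q}}(y+q)$ and expanding the projection via its directional derivative, $y=\bar y+\Pi_{\mathcal{C}_{\mathcal{Q}}(\bar y,\bar q)}(y-\bar y+q-\bar q)$, which is exactly your reduction to $e\in\mathcal{N}_K(d)$ with $K=\mathcal{C}_{\mathcal{Q}}(\bar y,\bar q)$. Where you diverge is in producing the index set $a$. The paper invokes the piecewise-linear structure of the projection onto a polyhedral cone (namely $\Pi_K\in\{\Pi_{L_a}\mid a\in\mathcal{I}_Q\}$, citing Facchinei--Pang), so that $d=\Pi_{L_a}(d+e)$ for some $a$, which immediately gives $d\in L_a$ and $e\in L_a^\perp$ since $L_a$ is a subspace. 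You instead take $a$ to be the active set of $d$ in $\{y\mid Qy\le 0\}$, certify $a\in\mathcal{I}_Q$ by $d$ itself, and place $e$ in $L_a^\perp$ via Farkas' lemma and complementary slackness ($\lambda_i Q_i d=0$ with $Q_i d<0$ for $i\notin a$ forces $\lambda_i=0$). Both arguments are valid; yours is more elementary and self-contained, and it delivers the membership $e\in K^\circ\cap L_a^\perp$ directly, whereas the paper's concluding display is phrased with $(K\cap L_a)^\circ$ and leaves the (easy) identification with $\mathcal{C}_{\mathcal{Q}^\circ}(\bar q,\bar y)\cap L_a^\perp$ implicit. The price you pay is that the ``delicate point'' you flag --- the local reduction of $\textup{gph}\,\mathcal{N}_{\mathcal{Q}}$ to $(\bar y,\bar q)+\textup{gph}\,\mathcal{N}_K$ --- must still be quoted or proved; the paper's projection-based formulation handles this in one line by citing the known directional differentiability of $\Pi_{\mathcal{Q}}$, so if you want to stay close to the literature you may simply cite the reduction lemma (Dontchev--Rockafellar) or the same projection identity rather than carrying out the ``persistence of active constraints'' argument you sketch at the end.
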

\begin{proof}
Note that for any $(q,y)\in\mathbb{R}^m\times \mathbb{R}^m$, the relation
${q}\in\mathcal{N}_{\mathcal{Q}}({y})$ can be equivalently written as
$
 y = \Pi_{\mathcal{Q}}(y+q).
$
Since $\mathcal{Q}$ is a convex polyhedral cone, by~\cite[Theorem 4.1.1]{facchinei2007finite} we know that for all $(y,q)$ satisfying ${q}\in\mathcal{N}_{\mathcal{Q}}({y})$ {and is}  sufficiently close to $(\bar{y},\bar{q})$,
\begin{equation}\label{eq:polycri1}
\begin{array}{ll}
y = \Pi_{\mathcal{Q}}(y + q) = \Pi_{\mathcal{Q}}(\bar{y} + \bar{q}) + \Pi_{\mathcal{Q}}'(\bar{y} +\bar{q}; y-\bar{y} + q - \bar{q})= \bar{y} + \Pi_{\mathcal{C}_{\mathcal{Q}}(\bar{y},\bar{q})}\;(y -\bar{y}+ q - \bar{q}).
\end{array}
\end{equation}
It is also known from~\cite[Proposition 4.1.9]{facchinei2007finite} that
$
\Pi_{\mathcal{C}_{\mathcal{Q}}(\bar{y}, \bar{q})} = {\{\Pi_{L_a} \mid a\in\mathcal{I}_{Q}}\}
$. Then for all $(y,q)$ satisfying $q\in\mathcal{N}_{\mathcal{Q}}(y)$ sufficiently close to 
$(\bar{y}, \bar{q})$,
there exists $a\in\mathcal{I}_{Q}$ such that
$$\Pi_{\mathcal{C}_{\mathcal{Q}}(\bar{y}, \bar{q})}(y-\bar{y} + q-\bar{q}) = \Pi_{L_a}(y-\bar{y} + q-\bar{q}).$$
Thus,  the equation (\ref{eq:polycri1}) is equivalent to
$$\mathcal{C}_{\mathcal{Q}}(\bar{y},\bar{q})\cap L_a\;\ni\; (y - \bar{y})\;\perp\; (q-\bar{q})\; \in \;(\mathcal{C}_{\mathcal{Q}}(\bar{y},\bar{q})\cap L_a)^\circ,
$$
which, together with (\ref{eq:polycritical}), 
{completes the proof}
of this proposition.
\end{proof}

Suppose that the KKT solution set to problem (\ref{dualSDP}) is non-empty.
Consider an optimal solution $(\bar{y}, \bar{w}, \overline{S})$ to problem (\ref{dualSDP}) and $\overline{X}\in\mathcal{M}_\psi(\bar{y}, \bar{w}, \overline{S})$.  Motivated by Propositions \ref{relationSX} and
\ref{prop:polycritical}, in order to state our main result on the metric subregularity of $
\mathcal{T}_l$,
we define the following joint `critical cone' associated with problem   (\ref{dualSDP}) and its constraints as
\begin{equation}\label{defn:criticalkkt}
\mathcal{C}(\bar{y}, \bar{w}, \overline{S}, \overline{X}) :=\left\{
\begin{array}{r}
(d_y, d_w, d_S, d_X)
\\ \in\mathbb{E}
\end{array}
\; \Big|
\begin{array}{l}
d_y\in\mathcal{C}_{\mathcal{Q}}(\bar{y}, b-\mathcal{A}\overline{X}), \;
d_S\in\mathcal{C}_{\mathcal{S}_+^n}(\overline{S}, \overline{X}),\; 
d_X\in\mathcal{C}_{\mathcal{S}_+^n}(\overline{X}, \overline{S}), \; 
\\[5pt]
-\mathcal{A}d_X\in\mathcal{C}_{\mathcal{Q}^\circ}(b-\mathcal{A}\overline{X}, \bar{y}),\; 
\langle d_y, \mathcal{A}d_X\rangle  = 0,
\\[5pt]
\overline{X}^{1/2}d_{{S}}(\overline{S}^{1/2})^\dagger  + (\overline{X}^{1/2})^\dagger d_{{X}} \overline{S}^{1/2} = 0
\end{array}
\right\}.
\end{equation}



\begin{theorem}\label{thm:mainTl}
Let  $(\bar{y}, \bar{w}, \overline{S})$ be an optimal solution to problem  (\ref{dualSDP}) and $\overline{X}\in\mathcal{M}_\psi(\bar{y}, \bar{w}, \overline{S})$.  Let $\bar{q} =  b-\mathcal{A}\overline{X}$ and
$\mathcal{C}_{\mathcal{Q}}(\bar{y}, \bar{q})$ have the form (\ref{eq:defnB}).
Define
$$\left\{\begin{array}{ll}
\mathcal{K}:= (\mathcal{C}_{\mathcal{S}_+^n}(\overline{S}, \overline{X}))^*,\\[5pt]
\Xi:= \left\{(d_y, d_w, d_S, d_X)\in\mathbb{E}\,\mid\, \mathcal{A}^*d_y + \mathcal{F}^*d_w + d_S = 0,\; (\nabla h^*)'(-\bar{w};d_w) + \mathcal{F}d_X = 0\right\}.
\end{array}\right.
$$
Assume that the following three conditions hold:\\[5pt]
(i)
the sets $\mathcal{F}\mathcal{K}$ and
$(\mathcal{A}\; -\mathcal{I})(\mathcal{K},\; \mathcal{C}_{\mathcal{Q}^\circ}(\bar{q},\bar{y})\cap L_a^\perp)$ are closed for all $a\in\mathcal{I}_Q$, where
$\mathcal{I}_Q$ and $L_a$ are defined in (\ref{defn:IB}) and (\ref{defn:LI}); \\[5pt]
(ii) $\langle \Pi_{\mathcal{K}}(d_X), \Pi_{\mathcal{K}}(d_S)\rangle  = 0$ for any  $(d_y,d_w,d_S, d_X)\in\mathcal{C}(\bar{y}, \bar{w}, \overline{S}, \overline{X})\cap \Xi$, where the set $\mathcal{C}(\bar{y}, \bar{w}, \overline{S}, \overline{X})$ is defined in
(\ref{defn:criticalkkt});\\[5pt]
(iii) for problem (\ref{dualSDP}),
 the second order sufficient condition (\ref{sosc:dual}) holds  at $(\bar{y}, \bar{w}, \overline{S})$ with respect to the multiplier $\overline{X}$. \\[5pt]
Then
there exist a constant $\kappa>0$ and a neighborhood $\mathcal{U}$ of $(\bar{y}, \bar{w}, \overline{S},\overline{X})$ such that for any $(u,V)\in\mathbb{E}$,
\begin{equation}\label{dualmetric}
\|(y,w,S) - (\bar{y},\bar{w}, \overline{S})\|\leq \kappa\|(u,V)\|, \quad \forall\; (y,w,S,X)\in\mathcal{T}_l^{-1}(u,V)\cap \mathcal{U}.
\end{equation}
In addition, if $\nabla h^*(\cdot)$ is locally Lipschitz continuous at $-\bar{w}$  and
there exists $\widehat{X}\in\mathcal{M}_\psi(\bar{y}, \bar{w}, \overline{S})$ such that $\textup{rank}\;(\widehat{X}) + \textup{rank}\;(\overline{S})=n$,  then
 $\mathcal{T}_l$ is metrically subregular at $(\bar{y}, \bar{w}, \overline{S},\overline{X})$ for the origin.

%
\end{theorem}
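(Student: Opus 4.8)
The plan is to establish the dual estimate (\ref{dualmetric}) by a normalization‑and‑contradiction argument built on the perturbed KKT system (\ref{pertkkt}), and then to upgrade it to the full metric subregularity of $\mathcal{T}_l$ by bringing in the metric subregularity of $\mathcal{T}_\phi$ furnished by Corollary \ref{coro:sdpTphi}. The second step uses the standard fact (cf.\ Example \ref{ex:1}) that the KKT solution set of a convex program splits as a product, $\mathcal{T}_l^{-1}(0)=\mathcal{T}_\psi^{-1}(0)\times\mathcal{T}_\phi^{-1}(0)$, so that once (\ref{dualmetric}) is available it remains only to bound $\textup{dist}(X,\mathcal{T}_\phi^{-1}(0))$ by $O(\|(u,V)\|)$.

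To prove (\ref{dualmetric}), suppose it fails: there are $(u^k,V^k)\to0$ and $(y^k,w^k,S^k,X^k)\in\mathcal{T}_l^{-1}(u^k,V^k)$ converging to $(\bar{y},\bar{w},\overline{S},\overline{X})$ with $t_k:=\|(y^k,w^k,S^k)-(\bar{y},\bar{w},\overline{S})\|>k\|(u^k,V^k)\|$, so $t_k\downarrow0$ and $\|(u^k,V^k)\|=o(t_k)$. Put $(d_y^k,d_w^k,d_S^k):=t_k^{-1}(y^k-\bar{y},w^k-\bar{w},S^k-\overline{S})$, a unit vector, and pass to a subsequence along which it converges to $(d_y,d_w,d_S)$ with $\|(d_y,d_w,d_S)\|=1$. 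The step I expect to be the main obstacle is that the normalized primal displacement $\delta_X^k:=t_k^{-1}(X^k-\overline{X})$ need not be bounded, because the primal solution set may carry nontrivial lineality while $t_k$ sees only the dual motion; this is exactly what condition (i) is designed to overcome. Dividing the $h^*$‑relation in (\ref{pertkkt}) by $t_k$ and using the directional differentiability of $\nabla h^*$ (Assumption \ref{assump:hstar}) shows that $\mathcal{F}\delta_X^k$ converges, and applying Proposition \ref{prop:polycritical} to $(y^k,\,b-\mathcal{A}X^k+u_1^k)\in\textup{gph}(\mathcal{N}_{\mathcal{Q}})$ — after passing to a further subsequence fixing some $a\in\mathcal{I}_Q$ — shows that $-\mathcal{A}\delta_X^k$ lies, up to $o(1)$, in $\mathcal{C}_{\mathcal{Q}^\circ}(\bar{q},\bar{y})\cap L_a^\perp$. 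The eigenvalue‑perturbation estimates of Proposition \ref{relationSX} (the $O(\|\Delta X\|\|\Delta S\|)$ bounds on the $\beta\gamma,\gamma\gamma$ blocks, which are $o(t_k)$, and the $O(\min\{\|\Delta X\|,\|\Delta S\|\})$ bound on the $\alpha\gamma$ block, which is $O(t_k)$) then confine the possibly‑unbounded part of $\delta_X^k$, up to $o(t_k)$, to $\mathcal{K}$; the closedness of $\mathcal{F}\mathcal{K}$ and of $(\mathcal{A}\;-\mathcal{I})(\mathcal{K},\,\mathcal{C}_{\mathcal{Q}^\circ}(\bar{q},\bar{y})\cap L_a^\perp)$ lets us replace $\delta_X^k$ by a bounded sequence with the same limiting $\mathcal{F}$‑image and the same limiting $(\mathcal{A}\;-\mathcal{I})$‑image of the primal and polyhedral displacements, hence converging to some $d_X$, without disturbing the remaining relations in (\ref{pertkkt}).

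With $(d_y,d_w,d_S,d_X)$ in hand, divide the four relations of (\ref{pertkkt}) by $t_k$ and pass to the limit (differentiating the $h^*$‑relation via Assumption \ref{assump:hstar}), and likewise pass to the limit in (\ref{prop:sigmaterm})$/t_k$; together with Propositions \ref{relationSX} and \ref{prop:polycritical} this places $(d_y,d_w,d_S,d_X)$ in $\mathcal{C}(\bar{y},\bar{w},\overline{S},\overline{X})\cap\Xi$ and, after a short computation using the complementarity identities ($\langle\overline{X},d_S\rangle=0$, $\langle d_y,\mathcal{A}d_X\rangle=0$, and the ones coming from the polyhedral and SDP complementarity), shows that the nonzero triple $(d_y,d_w,d_S)$ lies in the critical cone $\mathcal{C}_\psi(\bar{y},\bar{w},\overline{S})$. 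Condition (ii) then gives $\langle\Pi_{\mathcal{K}}(d_X),\Pi_{\mathcal{K}}(d_S)\rangle=0$, which, combined with the cross‑term estimate (\ref{prop:betabeta}) after normalization, makes the $\beta\beta$‑block contribution drop out of the second‑order expansion. Finally, forming the second‑order expansion of the dual objective of (\ref{dualSDP}) along $(y^k,w^k,S^k)$ — using the perturbed stationarity relations to cancel the first‑order terms, the second‑order Taylor expansion of $h^*$ in the direction $d_w$ to produce $\langle d_w,(\nabla h^*)'(-\bar{w};d_w)\rangle$, and the identities (\ref{prop:sigmaterm})--(\ref{prop:betabeta}) of Proposition \ref{relationSX} to produce the curvature term $\Upsilon_{\overline{S}}(\overline{X},d_S)$ — yields $\langle d_w,(\nabla h^*)'(-\bar{w};d_w)\rangle+\Upsilon_{\overline{S}}(\overline{X},d_S)\le0$, contradicting the second order sufficient condition (\ref{sosc:dual}) of (iii) since $(d_y,d_w,d_S)$ is a nonzero critical direction. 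This proves (\ref{dualmetric}).

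For the ``in addition'' statement, the local Lipschitz continuity of $\nabla h^*$ at $-\bar{w}$ forces the local strong convexity of $h$ around $\nabla h^*(-\bar{w})=\mathcal{F}\overline{X}$, i.e.\ Assumption \ref{ass:f}(a) holds at the relevant point, while the rank condition $\textup{rank}(\widehat{X})+\textup{rank}(\overline{S})=n$ puts us in case (ii) of Corollary \ref{coro:sdpTphi}; hence $\mathcal{T}_\phi$ is metrically subregular at $\overline{X}$ for the origin. On the other hand, from (\ref{pertkkt}), the estimate (\ref{dualmetric}), and the local Lipschitz hypothesis, a direct computation shows that $X$ solves the primal KKT system (\ref{kktp}) up to a residual of size $O(\|(u,V)\|)$, i.e.\ $\textup{dist}(0,\mathcal{T}_\phi(X))=O(\|(u,V)\|)$; metric subregularity of $\mathcal{T}_\phi$ then gives $\textup{dist}(X,\mathcal{T}_\phi^{-1}(0))=O(\|(u,V)\|)$. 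Combining this with (\ref{dualmetric}) — which bounds $\textup{dist}((y,w,S),\mathcal{T}_\psi^{-1}(0))$ by $\kappa\|(u,V)\|$ — and the product structure of $\mathcal{T}_l^{-1}(0)$ yields $\textup{dist}((y,w,S,X),\mathcal{T}_l^{-1}(0))=O(\|(u,V)\|)$ for $(y,w,S,X)\in\mathcal{T}_l^{-1}(u,V)$ near $(\bar{y},\bar{w},\overline{S},\overline{X})$, which is the asserted metric subregularity of $\mathcal{T}_l$.
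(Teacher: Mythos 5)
Your proof of the dual estimate (\ref{dualmetric}) follows essentially the same route as the paper: contradiction, normalization by $t_k=\|z^k-\bar z\|$, control of the (possibly unbounded) normalized primal displacement by splitting off a component in $\mathcal{K}$ and invoking the closedness hypotheses in (i) to realize the limiting images, placement of the limit direction in $\mathcal{C}(\bar y,\bar w,\overline S,\overline X)\cap\Xi$ via Propositions \ref{relationSX} and \ref{prop:polycritical}, and a contradiction with (\ref{sosc:dual}) using (ii). The paper obtains the contradiction by a direct algebraic identity showing $\langle d_{\bar w},(\nabla h^*)'(-\bar w;d_{\bar w})\rangle+2\Upsilon_{\overline S}(\overline X,d_{\overline S})=\langle\Pi_{\mathcal{K}}(d_{\overline S}),\Pi_{\mathcal{K}}(d_{\overline X})\rangle=0$ rather than a second-order expansion of the dual objective, but at the level of detail you give this part is sound.

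The gap is in the ``in addition'' part. You claim that a direct computation gives $\textup{dist}(0,\mathcal{T}_\phi(X))=O(\|(u,V)\|)$ and then apply the metric subregularity of $\mathcal{T}_\phi$. This step does not work. First, a point $(y,w,S,X)\in\mathcal{T}_l^{-1}(u,V)$ only satisfies $U\in X+\mathcal{N}_{\mathbb{S}_+^n}(S)$, so $X-U\in\mathbb{S}_+^n$ but $X$ itself need not be positive semidefinite; then $\partial\delta_{\mathbb{S}_+^n}(X)=\emptyset$ and $\mathcal{T}_\phi(X)$ is empty, so its distance to the origin is $+\infty$, not $O(\|(u,V)\|)$. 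More fundamentally, (\ref{pertkkt}) gives normal-cone memberships at the \emph{perturbed} points $y$, $S$, $X-U$, and there is no general linear-rate transfer of such memberships to $\bar y$, $\overline S$, $X$ (normal cone maps are not inner semicontinuous), so ``$X$ solves (\ref{kktp}) up to a residual of size $O(\|(u,V)\|)$'' in the sense of $\textup{dist}(0,\mathcal{T}_\phi(X))$ cannot be read off from (\ref{pertkkt}) and (\ref{dualmetric}). The paper avoids this entirely: it bounds $\textup{dist}(X,\Delta_i)$ for the three sets $\Delta_1=\{X\mid b-\mathcal{A}X\in\mathcal{N}_{\mathcal{Q}}(\bar y)\}$, $\Delta_2=\{X\mid \mathcal{F}X=\nabla h^*(-\bar w)\}$, $\Delta_3=\{X\mid X\in\mathcal{N}_{\mathbb{S}_-^n}(-\overline S)\}$ separately --- using Proposition \ref{prop:polyhedral} and Lemma \ref{lemma:polyhdist} for $\Delta_1$, Hoffman's bound plus the local Lipschitz continuity of $\nabla h^*$ for $\Delta_2$, and the metric subregularity of $\mathcal{N}_{\mathbb{S}_+^n}$ from Proposition \ref{sdp:metricsub} for $\Delta_3$, each estimate being in terms of $\|(u,V)\|+\|z-\bar z\|$ and hence $O(\|(u,V)\|)$ by (\ref{dualmetric}) --- and then combines them through the bounded linear regularity of $\{\Delta_1,\Delta_2,\Delta_3\}$, which is exactly where the rank condition $\textup{rank}(\widehat X)+\textup{rank}(\overline S)=n$ enters via $\widehat X\in\Delta_1\cap\Delta_2\cap\textup{ri}(\Delta_3)$ and Proposition \ref{prop:boundedlinear}. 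Your detour through Corollary \ref{coro:sdpTphi} is also shaky for a second reason: that corollary requires Assumption \ref{ass:f}(a), which is a growth condition at \emph{every} point of $\textup{dom}(h)$, and local Lipschitz continuity of $\nabla h^*$ at the single point $-\bar w$ does not deliver it (nor is it assumed in the theorem). You should replace this part of your argument by the set-decomposition argument just described.
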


\begin{proof}
 We shall first show that under the given conditions,
  there exist a constant $\kappa>0$ and a  neighborhood $\mathcal{U}$ of $(\bar{z},\overline{X})$ with $\bar{z} := (\bar{y}, \bar{w}, \overline{S})$ such that
 (\ref{dualmetric}) holds. 
 Assume for the sake of contradiction that there exist sequences $\{(y^k,w^k,S^k,X^k)\}\in\mathbb{E}$ and $\{(u_1^k,u_2^k,U^k, V^k)\}\in\mathbb{E}$ with $k\geq 0$ such that $u^k:=(u_1^k,u_2^k,U^k)\to 0$, $V^k\to0$, $z^k:=(y^k,w^k,S^k)\to \bar{z}$, $X^k\to\overline{X}$ with $(z^k, X^k)\in\mathcal{T}_l^{-1}(u^k, V^k)$ and
$$
t_k:=\|z^k - \bar{z}\|\geq \rho_k\|(u^k, V^k)\|, \quad \textup{for some}\; 0<\rho_k \to\infty.
$$
By restricting to an appropriate subsequence if necessary, we may
assume that $(z^k - \bar{z})/{t_k}\to d_{\bar{z}}$ for some $0\neq d_{\bar{z}}:=(d_{\bar{y}}, d_{\bar{w}}, d_{\overline{S}})\in\mathbb{Z}$.
It is easy to see from the KKT optimality condition (\ref{pertkkt}) that for all $k\geq 0$,
\begin{equation}\label{thm:eq1}
0 = \mathcal{A}^*(y^k - \bar{y}) + \mathcal{F}^*(w^k - \bar{w}) + (S^k - \overline{S})  +V^k
\end{equation}
and for all $k$ sufficiently large,
\begin{equation}\label{thm:lim1}
\begin{array}{ll}
0 & = \nabla h^*(-w^k)  - \mathcal{F}X^k+ u_2^k \\[5pt]
& = \nabla h^*(-\bar{w})   - \mathcal{F}\overline{X} +  (\nabla h^*)'(-\bar{w};-w^k + \bar{w}) +r^k - \mathcal{F}(X^k - \overline{X}) + u_2^k  \\[5pt]
& = (\nabla h^*)'(-\bar{w};-w^k + \bar{w})  - \mathcal{F}(X^k - \overline{X})+r^k + u_2^k
\end{array}
\end{equation}
with some $r^k\in\mathbb{W}$ and $r^k = o(t_k)$ as $k\to\infty$.
Dividing both sides of the  equation (\ref{thm:eq1})  by  $t_k$ and then taking limits, we obtain
\begin{equation}\label{critical1}
\mathcal{A}^*d_{\bar{y}} + \mathcal{F}^*d_{\bar{w}} + d_{\overline{S}} = 0.
\end{equation}
For the simplicity, we denote
$$
\Omega := \left\{X\in\mathbb{S}^n\;\mid\;[\overline{P}_{\alpha}\; \overline{P}_{\beta}]^TX\,[\overline{P}_{\alpha}\; \overline{P}_{\beta}] = 0\right\}
$$
and for all $k\geq 0$,
\begin{equation}\label{thm:notation}
\left\{\begin{array}{ll}
{X}_U^k := X^k - U^k, \quad         
\widetilde{X}_U^k := \overline{P}^TX_U^k\overline{P},\quad \widetilde{S}^k := \overline{P}^TS^k\overline{P},
\\[5pt]
 H^k:=\Pi_{{\Omega}}\left(({X_U^k - \overline{X}})/{t_k}\right), \quad G^k := (X_U^k - \overline{X})/{t_k} - H^k\in\mathcal{K}.
\end{array}\right.
\end{equation}
 Using Proposition \ref{relationSX} and $0\in \overline{X} + \partial\delta_{\mathbb{S}_+^n}(\overline{S})$, $0\in X_U^k + \partial\delta_{\mathbb{S}_+^n}(S^k)$ for all $k\geq 0$,  we deduce that for all  $(X^k,S^k)$ sufficiently close to $(\overline{X}, \overline{S})$,
$$\left\{\begin{array}{ll}
\widetilde{S}^k_{\alpha\alpha}  = O(\|S^k - \overline{S}\|\|X_U^k - \overline{X}\|), \quad\widetilde{S}^k_{\alpha\beta} = O(\|S^k - \overline{S}\|\|X_U^k - \overline{X}\|),\\[5pt]
(\widetilde{X}_U^k)_{\beta\gamma}  = O(\|S^k - \overline{S}\|\|X_U^k - \overline{X}\|), \quad(\widetilde{X}_U^k)_{\gamma\gamma} = O(\|S^k - \overline{S}\|\|X_U^k - \overline{X}\|),\\[5pt]
\widetilde{S}^k_{\alpha\gamma} = -\overline{\Lambda}_{\alpha}^{-1}(\widetilde{X}_U^k)_{\alpha\gamma}\overline{\Lambda}_{\gamma} + O(\|S^k - \overline{S}\|\|X_U^k - \overline{X}\|),
\end{array}\right.
$$
which, together with the fact that $\widetilde{S}^k_{\beta\beta}\in\mathbb{S}_+^{|\beta|}$, yields
\begin{equation}\label{thm1:lim11}
d_{\overline{S}}\in\mathcal{C}_{\mathcal{S}_+^n}(\overline{S}, \overline{X}), \quad H_1:=\lim_{k\to\infty} H^k= \overline{P}^T\begin{pmatrix}
 0 & 0 & -\overline{\Lambda}_{\alpha}(\tilde{d}_{\overline{S}})_{\alpha\gamma}\overline{\Lambda}_{\gamma}^{-1}\\
 0 & 0 & 0\\
\left(-\overline{\Lambda}_{\alpha}(\tilde{d}_{\overline{S}})_{\alpha\gamma}\overline{\Lambda}_{\gamma}^{-1}\right)^T & 0 & 0
 \end{pmatrix}\overline{P},
\end{equation}
where $\tilde{d}_{\overline{S}}:= \overline{P}^T d_{\overline{S}}\overline{P}$.
From Proposition \ref{prop:polycritical} and  $b-\mathcal{A}\overline{X}\in\mathcal{N}_{\mathcal{Q}}(\bar{y})$, $u_1^k + b-\mathcal{A}X^k \in\mathcal{N}_{\mathcal{Q}}(y^k)$ for each $k$, we may  assume,  by passing to a subsequence if necessary, that
there exists $a\in\mathcal{I}_Q$ such that for all $k\geq 0$,
$$\mathcal{C}_{\mathcal{Q}}(\bar{y},b-\mathcal{A}\overline{X})\cap L_a \ni (y^k - \bar{y})\perp (u_1^k -\mathcal{A}X^k +\mathcal{A}\overline{X})\in\mathcal{C}_{\mathcal{Q}^\circ}(b-\mathcal{A}\overline{X}, \bar{y})\cap L_a^\perp.
$$
This further implies that
\begin{equation}\label{thm:lim33}
\mathcal{C}_{\mathcal{Q}}(\bar{y},b-\mathcal{A}\overline{X})\cap L_a \ni d_{\bar{y}} \perp  ( (u_1^k-\mathcal{A}U^k)/t_k-\mathcal{A}(H^k + G^k) )\in\mathcal{C}_{\mathcal{Q}^\circ}(b-\mathcal{A}\overline{X}, \bar{y})\cap L_a^\perp.
\end{equation}
In view of  (\ref{thm:lim1}), (\ref{thm:lim33}) and the definitions of $H^k$ and $G^k$ in (\ref{thm:notation}), it follows that for  $k$ sufficiently large,
 $$
\left\{\begin{array}{ll}
 (u_1^k-\mathcal{A}U^k)/t_k-\mathcal{A}H^k  =
\mathcal{A}G^k +\left((u_1^k-\mathcal{A}U^k)/t_k-\mathcal{A}(H^k + G^k) \right)\\[2pt]
\qquad\qquad\qquad\qquad\qquad\;\in (\mathcal{A}\quad \mathcal{I})(\mathcal{K},\; \mathcal{C}_{\mathcal{Q}^\circ}(b-\mathcal{A}\overline{X}, \bar{y}) \cap L_a^\perp),\\[5pt]
(\nabla h^*)'\left(-\bar{w};{-(w^k-\bar{w})}/{t_k}\right) -\mathcal{F}H^k - (\mathcal{F}U^k - r^k - u_2^k)/t_k = \mathcal{F}G^k\in\mathcal{F}\mathcal{K}.
\end{array}\right.
 $$
Since $(\mathcal{A}\quad \mathcal{I})(\mathcal{K},\; \mathcal{C}_{\mathcal{Q}^\circ}(b-\mathcal{A}\overline{X},\bar{y})\cap L_a^\perp)$ and $\mathcal{F}\mathcal{K}$ are assumed to be closed and that (\ref{thm1:lim11}) holds,  there exist
$q\in \mathcal{C}_{\mathcal{Q}^\circ}(b-\mathcal{A}\overline{X}, \bar{y})\cap L_a^\perp$ and ${H}_2\in\mathcal{K}$  such that
 \begin{equation}\label{thm:lim44}
 -\mathcal{A}H_1= \mathcal{A}{H}_2 + q, \quad \quad -(\nabla h^*)'(-\bar{w};d_{\bar{w}})   -\mathcal{F}H_1 = \mathcal{F}{H}_2.
 \end{equation}
Let $d_{\overline{X}} := {H}_1 + {H}_2$. Then  we can obtain from (\ref{critical1}) and (\ref{thm1:lim11})-(\ref{thm:lim44})
that
$(d_{\bar{y}}, d_{\bar{w}}, d_{\overline{S}}, d_{\overline{X}})\in\mathcal{C}(\bar{y}, \bar{w}, \overline{S}, \overline{X})\cap \Xi$.
Furthermore, by using condition (ii) in this theorem,  we have that  $0\neq (d_{\bar{y}},d_{\bar{w}}, d_{\overline{S}})\in\mathcal{C}_{\psi}(\bar{y}, \bar{w}, \overline{S})$ and
$$\begin{array}{rl}
&\langle d_{\bar{w}}, (\nabla h^*)'(-\bar{w};d_{\bar{w}})\rangle +
2\Upsilon_{\overline{S}}(\overline{X}, d_{\overline{S}})\\[2pt]
= & \langle d_{\bar{w}}, -\mathcal{F}d_{\overline{X}}\rangle +
2\langle \overline{X}, d_{\overline{S}}\overline{S}^\dagger d_{\overline{S}}\rangle 
=  \langle \mathcal{A}^*d_{\bar{y}} + d_{\overline{S}}, d_{\overline{X}}\rangle + \langle \overline{\Lambda}_{\alpha}\;,\; (\tilde{d}_{\overline{S}})_{\alpha\gamma}\overline{\Lambda}_{\gamma}^{-1} (\tilde{d}_{\overline{S}})_{\alpha\gamma}^T\rangle
   \\[2pt]
= & \langle  d_{\overline{S}}, d_{\overline{X}}\rangle- 2\langle (\tilde{d}_{\overline{S}})_{\alpha\gamma}\;,\;(\tilde{d}_{\overline{X}})_{\alpha\gamma}\rangle  
=  \langle (\tilde{d}_{\overline{S}})_{\beta\beta}, (\tilde{d}_{\overline{X}})_{\beta\beta}\rangle  = \langle \Pi_{\mathcal{K}}(d_{\overline{S}}), \Pi_{\mathcal{K}}(d_{\overline{X}})\rangle =
0,
\end{array}
$$
which contradicts  the assumed second order sufficient condition (\ref{sosc:dual})
at $(d_{\bar{y}},d_{\bar{w}}, d_{\overline{S}})$ for $\overline{X}$. This  contradiction shows that    there exist a constant $\kappa>0$ and a neighborhood $\mathcal{U}$ of $(\bar{y}, \bar{w}, \overline{S},\overline{X})$ such that
(\ref{dualmetric}) holds.

Next we shall show that if there exists $\widehat{X}\in\mathcal{T}_{\phi}^{-1}(0)$ such that $\textup{rank}\;(\widehat{X}) + \textup{rank}\;(\overline{S})=n$,
 then $\mathcal{T}_l$ is metrically subregular at $(\bar{z}, \overline{X})$ for the origin,   or in view of Definition \ref{defn:metric11}, equivalently to show that
 there exist a constant $\kappa'>0$ and a neighborhood $\mathcal{U}'$ of $(\bar{y}, \bar{w}, \overline{S},\overline{X})$ such that for any $(u,V):=(u_1,u_2,U,V)\in\mathbb{E}$,
\begin{equation}\label{metricTl}
\textup{dist}((y,w,S,X), \mathcal{T}_l^{-1}(0))\leq \kappa' \|(u, V)\|, \; \forall\, (y,w,S,X)\in\mathcal{T}_l^{-1}(u,V)\cap \mathcal{U}'.
\end{equation}
Denote
$$\begin{array}{cc}
\Delta_1 := \{X\in\mathbb{S}^n\mid b-\mathcal{A}X\in\mathcal{N}_{\mathcal{Q}}(\bar{y})\}, \quad\quad \Delta_2: = \{X\in\mathbb{S}^n \mid \mathcal{F}X - \nabla h^*(-\bar{w})=0\},\\ [5pt]
\Delta_3 : = \{X\in\mathbb{S}^n\mid X\in \mathcal{N}_{\mathbb{S}_-^n}(-\overline{S})\}.
\end{array}
$$
Then one has  $\mathcal{T}_\phi^{-1}(0) = \Delta_1\cap\Delta_2\cap\Delta_3$ and $\widehat{X}\in\Delta_1\cap\Delta_2\cap\text{ri}\;(\Delta_3)$.
Thus,
 we obtain from  Proposition \ref{prop:boundedlinear} that there exists a constant $\kappa_1>0$ such that for any $(y,w,S,X)\in\mathcal{U}$,
\begin{equation}\label{Tlineq000}
\textup{dist}(X, \mathcal{T}_\phi^{-1}(0)) \leq \kappa_1\big(\textup{dist}(X, \Delta_1) + \textup{dist}(X, \Delta_2)  + \textup{dist}(X, \Delta_3) \big).
\end{equation}
Consider an arbitrary but fixed point $(y,w,S,X)\in\mathcal{T}_l^{-1}(u,V)\cap \mathcal{U}$.
From Proposition \ref{prop:polyhedral},
 Lemma \ref{lemma:polyhdist} and the fact that $u_1+b-\mathcal{A}X \in\mathcal{N}_{\mathcal{Q}}(y)$, we see that there exist constants $\kappa_2>0$ and $\kappa_2' >0$ such that
\begin{equation}\label{Tlineq111}
\begin{array}{ll}
 \textup{dist}(X,\Delta_1) & \leq \kappa_2\textup{dist}(b-\mathcal{A}X, \mathcal{N}_{\mathcal{Q}}(\bar{y}))\\[2pt]
  &\leq \kappa_2(\|b-\mathcal{A}X - (u_1+b-\mathcal{A}X)\| + \textup{dist}(u_1+b-\mathcal{A}X, \mathcal{N}_{\mathcal{Q}}(\bar{y})))\\[2pt]
  &\leq \kappa_2'(\|u_1\| + \textup{dist}(\bar{y}, \mathcal{N}_{\mathcal{Q}}^{-1}(u_1+b-\mathcal{A}X))) \leq  \kappa_2'(\|u_1\| + \|y-\bar{y}\|),
  \end{array}
  \end{equation}
  where the second inequality comes from the global Lipschitz continuity of $\textup{dist}(\cdot, \mathcal{N}_{\mathcal{Q}}(\bar{y}))$ with modulus $1$.
Using Hoffman's error bound~\cite{hoffman1952approximate} and the assumption on the
local Lipschitz continuity of $\nabla h^*$ at $-\bar{w}$, shrinking  $\mathcal{U}$ if necessary,
 there exist constants $\kappa_3>0$ and $\kappa_3'>0$ such that
\begin{equation}\label{Tline222}
\begin{array}{ll}
 \textup{dist}(X,\Delta_2)& \leq \kappa_3\|\mathcal{F}X - \nabla h^*(-\bar{w})\|\\[2pt]
  &\leq  \kappa_3(\|\mathcal{F}X - \nabla h^*(-{w})\| + \|\nabla h^*(-{w})  - h^*(-\bar{w})\| )\\[2pt]
 &\leq \kappa_3'(\|u_2\| + \|w-\bar{w}\|).
 \end{array}
\end{equation}
Since $\partial\delta_{\mathbb{S}_+^n}(\cdot)= \mathcal{N}_{\mathbb{S}_+^n}(\cdot)$ has been proven to be metrically subregular
 at $\overline{X}$ for $-\overline{S}$ in Proposition \ref{sdp:metricsub}  and $-S\in\mathcal{N}_{\mathbb{S}_+^n}(X-U)$,
  we obtain, by shrinking  $\mathcal{U}$ if necessary,
 that there exists a constant $\kappa_4>0$ such that
\begin{equation}\label{Tlineq333}
\begin{array}{rl}
\textup{dist}(X,\Delta_3) \leq &\|X - (X-U)\| + \textup{dist}(X - U,\mathcal{N}_{\mathbb{S}_-^n}(-\overline{S}))\\[5pt]
 \leq &
\|U\| + \kappa_4\;\textup{dist}(-\overline{S}, \mathcal{N}_{\mathbb{S}_+^n}(X - U))\leq \max\{1,\kappa_4\}(\|U\| + \|S - \overline{S}\|).
\end{array}
\end{equation}
Therefore, combining  the inequality (\ref{dualmetric}) and the
 inequalities (\ref{Tlineq000})-(\ref{Tlineq333}) and, we show that
there exists a constant $\kappa'$ along with a neighborhood $\mathcal{U}'$ such that
(\ref{metricTl}) holds. This completes the proof of this proposition.
\end{proof}
Below, we make some  comments on Theorem \ref{thm:mainTl}.
\begin{remark}\label{remark:Tl}
As one can see, the proof of Theorem \ref{thm:mainTl}
is complicated due to the non-polyhedral nature of the positive semidefinite cone. Here,
we {have} adopted some ideas from the nonlinear programming literature~\cite{dontchev1995characterizations,klatte2000upper,izmailov2013note} on the proof of the metric subregularity of $\mathcal{T}_l$ to our context.
 It is easy to verify via Theorem \ref{thm:mainTl}
that for Example \ref{ex:1}, the operator $\mathcal{T}_l$ is metrically subregular at
any $(\bar{y}, \overline{S}, \overline{X})\in\mathcal{T}_l^{-1}(0)$ with $\overline{X}_{11}>0$ for the origin. The failure of the metric subregularity of $\mathcal{T}_l$ at $(\bar{y}, \overline{S},\overline{X}')$ with $\overline{X}'_{11} =0 $ for the origin is due to the violation of the  second order sufficient condition at $(\bar{y}, \overline{S})$ for $\overline{X}'$.

The assumed condition (i) in Theorem \ref{thm:mainTl}  holds  automatically if $|\beta|=0$ or $|\beta|= 1$, in which 
{case} the set
 $\mathcal{K}$ is a polyhedral cone.  The polyhedral cones and positive semidefinite cones are ``nice cones'' in the terminology of Pataki~\cite[Definition 1]{pataki2007closedness}, where the author also  characterized  the  closedness of these cones under linear transformations~\cite[Theorem 1.1]{pataki2007closedness}.
 It is also clear that if $(\bar{y}, \bar{w}, \overline{S}, \overline{X})$ satisfies the partial strict complementarity, i.e., $|\beta| = 0$, then condition  (ii) in Theorem \ref{thm:mainTl} holds automatically.
 One weaker sufficient condition than this partial strict complementarity
to ensure the validity of
condition (ii)  is that  either
$\Pi_{\mathcal{K}}(d_S)=0$ or $\Pi_{\mathcal{K}}(d_X)=0$ for any $(d_y,d_w,d_S, d_X)\in\mathcal{C}(\bar{y}, \bar{w}, \overline{S}, \overline{X})\cap \Xi$.
\end{remark}

To illustrate the metric subregularity results proved in Theorem \ref{thm:mainTl}, we provide the following example, which is first considered in~\cite{ding2016characterization} for different purposes.

\begin{example}\label{ex:Ex3}
	Consider the following convex quadratic SDP problem:
	\begin{equation}\label{eq:Ex3-d}
		\min \Big\{ \frac{1}{2}(\langle I_2,X\rangle-1)^2 + \delta_{\S_+^2}(X) \mid
		 \langle A,X\rangle \leq  1 
		\Big\},
		\end{equation}
		whose dual (in its equivalent minimization form) can be written as
	\begin{equation}\label{eq:Ex3}
	 \min \Big\{ \delta_{\mathbb{R}_+}(y) - y + \frac{1}{2}w^2 + w  +\delta_{{\S}^2_+}(S) 
 \mid
	 yA +wI_2+ S = 0 \Big\},
	\end{equation}	
			where  $A=\left(\begin{array}{cc}
	1 & -2 \\
	-2 & 1
	\end{array} \right)$.
	Problem (\ref{eq:Ex3}) has a unique solution $(\bar{y},\bar{w},\overline{S})=\left(0,0,0_{2}	\right)$. The critical cone of problem \eqref{eq:Ex3} at $(\bar{y},\bar{w},\overline{S})$ is given by ${\cal C}_{\psi}(\bar{y},\bar{w}, \overline{S})=\{ 0\}$. Thus,  both
conditions (ii) and (iii) imposed in Theorem \ref{thm:mainTl} hold.  	
	Note that the solution set to  problem \eqref{eq:Ex3-d} is given by
\[
	\mathcal{T}_{\phi}^{-1}(0) = \left\{X\in{\S}_+^2 \,\mid\,\langle A,X\rangle\leq  1, \; \langle I_2,X\rangle =1  \right\}.
	\]
	Since $|\beta|\leq 1$ for all $\overline{X}\in\mathcal{T}_{\phi}^{-1}(0)$, we see that condition (i) also holds. Therefore, by Theorem \ref{thm:mainTl} we know that $\mathcal{T}_l$ is metrically subregular at any $(\bar{y}, \bar{w}, \overline{S}, \overline{X})\in\mathcal{T}_l^{-1}(0)$ for the origin, even though the partial strict complementarity condition fails at $\overline{X} = \begin{pmatrix}
1 & \\
 & 0
 \end{pmatrix}
$ or $ \overline{X} =\begin{pmatrix}
0 & \\
 & 1
 \end{pmatrix}$.
\end{example}




\section{Asymptotic superlinear convergence of the ALM with multiple solutions}

In this section, we study the asymptotic superlinear convergence of the ALM for solving problem (\ref{dualSDP}). First, we need to state the PPA considered by Rockafellar~\cite{rockafellar1976monotone}.
Let $\mathcal{T}:\mathbb{X}\rightrightarrows\mathbb{X}$ be a maximal monotone operator. Consider the   following inclusion problem:
$$0\in\mathcal{T}(\xi), \;\forall\, \xi\in\mathbb{X}.
$$
Given a sequence of scalars $c_k\uparrow c_\infty\leq \infty$ and a starting point $\xi^0\in\mathbb{X}$,
the  $(k+1)$-th iteration of the PPA takes the form of
\begin{equation}\label{ppa:iter}
\xi^{k+1} \approx (\mathcal{I} + c_k \mathcal{T})^{-1}(\xi^k), \quad \forall\,k\geq 0.
\end{equation}
For each $k\geq 0$, denote
$$
e^{k+1} := (\mathcal{I} + c_k \mathcal{T})^{-1}(\xi^k)-\xi^{k+1}.
$$
In one of his seminal works~\cite{rockafellar1976monotone}, Rockafellar suggested the following criteria for computing $\xi^{k+1}$ approximately:
\begin{flalign*}
(A)\quad & \|e^{k+1}\|\leq \varepsilon_k, \quad \varepsilon_k \geq 0, \quad   
\mbox{{$\sum_{k=1}^\infty$}}\, \varepsilon_k <\infty, &\\[5pt]
(B) \quad & \|e^{k+1}\| \leq \eta_k\|\xi^{k+1} - \xi^k\|, \quad \eta_k \geq 0, \quad 
\mbox{{$\sum_{k=1}^\infty$}}  \,\eta_k < \infty.
\end{flalign*}

The next  theorem concerning the convergence of the PPA
essentially
comes from Rockafellar~\cite{rockafellar1976monotone} with an important
extension made by Luque~\cite{luque1984asymptotic} on the rate of convergence without assuming the uniqueness of the solutions. For our later developments, here we make a further extension
by relaxing Luque's condition~\cite[(2.1)]{luque1984asymptotic}, which can be too restrictive in our context, in particular when the optimal solution set to problem (\ref{primalSDP})
is unbounded. Note that for the case $e^k\equiv 0$ for all $k\geq 0$, this relaxation has also been discussed by Leventhal~\cite{leventhal2009metric}.

\begin{theorem}\label{thm:ppa}
Assume that $\mathcal{T}^{-1}(0)\neq \emptyset$. Let $\{\xi^k\}$ be an infinite sequence generated by the PPA (\ref{ppa:iter}) with stopping criterion $(A)$. {Then the following statements hold.}
\\[5pt]
(a) For any $\bar{\xi}\in \mathcal{T}^{-1}(0)$, it holds that
\begin{equation}\label{ppa:decrease}
\|\xi^{k+1} + e^{k+1} - \bar{\xi}\|^2  \leq \|\xi^k - \bar{\xi}\|^2 -  \|\xi^{k+1} + e^{k+1} - \xi^k \|^2, \;\forall\, k\geq 0.
\end{equation}
(b) The whole sequence $\{\xi^k\}$ converges to some $\xi^\infty \in \mathcal{T}^{-1}(0)$. Assume that $\mathcal{T}$ is metrically subregular at $\xi^\infty$ for the origin with modulus $\kappa\geq 0$.
If in the PPA, the criterion (B) is also employed, then there exists $\bar{k}\geq 0$ such that for all $k\geq \bar{k}$,  $\eta_k<1$ and
\begin{equation}\label{ppa:rate}
\textup{dist}\,(\xi^{k+1}, \mathcal{T}^{-1}(0))\leq \theta_k \textup{dist}\,(\xi^{k}, \mathcal{T}^{-1}(0)),\end{equation}
where $$
\begin{array}{ll}
1> \theta_k = (\kappa/\sqrt{\kappa^2+ c_k^2} + 2\eta_k)(1-\eta_k)^{-1}\to\theta_\infty = \kappa/\sqrt{\kappa^2+ c_\infty^2} \quad ( \theta_\infty = 0 \; \textup{if}\;  c_\infty = \infty).
\end{array}
$$

\end{theorem}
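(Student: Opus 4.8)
The plan is to follow the classical Rockafellar--Luque argument, with the local error bound now supplied by metric subregularity (Definition~\ref{defn:metric11}) instead of by a Lipschitz property of $\mathcal{T}^{-1}$ at the origin. Throughout, write $\hat{\xi}^{k+1}:=(\mathcal{I}+c_k\mathcal{T})^{-1}(\xi^k)=\xi^{k+1}+e^{k+1}$, so that $w^{k+1}:=c_k^{-1}(\xi^k-\hat{\xi}^{k+1})\in\mathcal{T}(\hat{\xi}^{k+1})$. For part~(a), fix $\bar{\xi}\in\mathcal{T}^{-1}(0)$; since $0\in\mathcal{T}(\bar{\xi})$, monotonicity of $\mathcal{T}$ gives $\langle\xi^k-\hat{\xi}^{k+1},\,\hat{\xi}^{k+1}-\bar{\xi}\rangle\geq 0$. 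Expanding $\|\xi^k-\bar{\xi}\|^2=\|\xi^k-\hat{\xi}^{k+1}\|^2+2\langle\xi^k-\hat{\xi}^{k+1},\hat{\xi}^{k+1}-\bar{\xi}\rangle+\|\hat{\xi}^{k+1}-\bar{\xi}\|^2$ and discarding the nonnegative middle term yields $\|\hat{\xi}^{k+1}-\bar{\xi}\|^2\leq\|\xi^k-\bar{\xi}\|^2-\|\xi^k-\hat{\xi}^{k+1}\|^2$, which is exactly (\ref{ppa:decrease}) after substituting $\hat{\xi}^{k+1}=\xi^{k+1}+e^{k+1}$. Note this step uses only criterion $(A)$ (indeed, nothing about the errors at all).

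For the convergence statement in (b), I would first observe that $\{\xi^k\}$ is quasi-Fej\'er monotone with respect to $\mathcal{T}^{-1}(0)$: by (a), $\|\xi^{k+1}-\bar{\xi}\|\leq\|\hat{\xi}^{k+1}-\bar{\xi}\|+\|e^{k+1}\|\leq\|\xi^k-\bar{\xi}\|+\varepsilon_k$ for every $\bar{\xi}\in\mathcal{T}^{-1}(0)$, so $\{\|\xi^k-\bar{\xi}\|\}$ is bounded and, since $\sum_k\varepsilon_k<\infty$, convergent for each such $\bar\xi$. Hence $\{\xi^k\}$ is bounded; summing (\ref{ppa:decrease}) over $k$ (again absorbing the $\varepsilon_k$-terms via $\sum_k\varepsilon_k<\infty$) shows $\sum_k\|\hat{\xi}^{k+1}-\xi^k\|^2<\infty$, so $\hat{\xi}^{k+1}-\xi^k\to 0$ and therefore $\xi^{k+1}-\xi^k\to 0$ and, since $c_k\geq c_0>0$, $w^{k+1}\to 0$. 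If $\xi^\infty$ is any cluster point of $\{\xi^k\}$, then along a subsequence $\hat{\xi}^{k+1}\to\xi^\infty$ while $w^{k+1}\to 0$ with $w^{k+1}\in\mathcal{T}(\hat{\xi}^{k+1})$; closedness of $\textup{gph}(\mathcal{T})$ (maximal monotonicity) gives $0\in\mathcal{T}(\xi^\infty)$. Since $\{\|\xi^k-\xi^\infty\|\}$ converges and has $0$ as a subsequential limit, the whole sequence converges to $\xi^\infty\in\mathcal{T}^{-1}(0)$.

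For the rate, assume $\mathcal{T}$ is metrically subregular at $\xi^\infty$ for the origin with modulus $\kappa$ on a neighborhood $\mathcal{U}$ of $\xi^\infty$. Since $\xi^{k+1}\to\xi^\infty$ and $e^{k+1}\to 0$ we have $\hat{\xi}^{k+1}\in\mathcal{U}$ for all large $k$, and since $\sum_k\eta_k<\infty$ we have $\eta_k<1$ for all large $k$; let $\bar{k}$ be an index past which both hold. For $k\geq\bar{k}$, metric subregularity applied at $\hat{\xi}^{k+1}$, together with $w^{k+1}\in\mathcal{T}(\hat{\xi}^{k+1})$, gives $\textup{dist}(\hat{\xi}^{k+1},\mathcal{T}^{-1}(0))\leq\kappa\|w^{k+1}\|=(\kappa/c_k)\|\xi^k-\hat{\xi}^{k+1}\|$. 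On the other hand, applying (a) with $\bar{\xi}=\Pi_{\mathcal{T}^{-1}(0)}(\xi^k)$ and using $\textup{dist}(\hat{\xi}^{k+1},\mathcal{T}^{-1}(0))\leq\|\hat{\xi}^{k+1}-\bar{\xi}\|$ gives $\textup{dist}^2(\hat{\xi}^{k+1},\mathcal{T}^{-1}(0))\leq\textup{dist}^2(\xi^k,\mathcal{T}^{-1}(0))-\|\xi^k-\hat{\xi}^{k+1}\|^2$ and also $\|\xi^k-\hat{\xi}^{k+1}\|\leq\textup{dist}(\xi^k,\mathcal{T}^{-1}(0))$. Eliminating $\|\xi^k-\hat{\xi}^{k+1}\|^2$ between the first two displays yields $(1+c_k^2/\kappa^2)\,\textup{dist}^2(\hat{\xi}^{k+1},\mathcal{T}^{-1}(0))\leq\textup{dist}^2(\xi^k,\mathcal{T}^{-1}(0))$, i.e. $\textup{dist}(\hat{\xi}^{k+1},\mathcal{T}^{-1}(0))\leq(\kappa/\sqrt{\kappa^2+c_k^2})\,\textup{dist}(\xi^k,\mathcal{T}^{-1}(0))$. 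Finally I pass from $\hat{\xi}^{k+1}$ to $\xi^{k+1}$ using criterion $(B)$: from $\|\xi^{k+1}-\xi^k\|\leq\|e^{k+1}\|+\|\hat{\xi}^{k+1}-\xi^k\|\leq\|e^{k+1}\|+2\,\textup{dist}(\xi^k,\mathcal{T}^{-1}(0))$ and $\|e^{k+1}\|\leq\eta_k\|\xi^{k+1}-\xi^k\|$ with $\eta_k<1$ one gets $\|e^{k+1}\|\leq\frac{2\eta_k}{1-\eta_k}\,\textup{dist}(\xi^k,\mathcal{T}^{-1}(0))$, whence $\textup{dist}(\xi^{k+1},\mathcal{T}^{-1}(0))\leq\textup{dist}(\hat{\xi}^{k+1},\mathcal{T}^{-1}(0))+\|e^{k+1}\|\leq\theta_k\,\textup{dist}(\xi^k,\mathcal{T}^{-1}(0))$ with $\theta_k=(\kappa/\sqrt{\kappa^2+c_k^2}+2\eta_k)(1-\eta_k)^{-1}$. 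Since $\eta_k\to 0$ and $c_k\uparrow c_\infty$, $\theta_k\to\kappa/\sqrt{\kappa^2+c_\infty^2}$ (equal to $0$ if $c_\infty=\infty$), so $\theta_k<1$ for $k$ large, after possibly enlarging $\bar{k}$.

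The step I expect to be the main obstacle is not any single inequality but the bookkeeping of the inexactness term $e^{k+1}$: criterion $(B)$ controls $\|e^{k+1}\|$ only through $\|\xi^{k+1}-\xi^k\|$, which itself involves $e^{k+1}$, so one must break the apparent circularity using $\eta_k<1$ together with the Fej\'er-type bounds $\|\hat{\xi}^{k+1}-\bar{\xi}\|\leq\|\xi^k-\bar{\xi}\|$ and $\|\hat{\xi}^{k+1}-\xi^k\|\leq\textup{dist}(\xi^k,\mathcal{T}^{-1}(0))$ coming from part (a). One must also be careful that it is $\hat{\xi}^{k+1}$ (the exact resolvent value), not merely $\xi^k$, that is required to lie in the metric-subregularity neighborhood $\mathcal{U}$, and that all the $k$-dependent constants collapse to the claimed limit $\theta_\infty$; this is precisely the point where relaxing Luque's global condition~\cite[(2.1)]{luque1984asymptotic} to the local estimate of Definition~\ref{defn:metric11} matters, since metric subregularity only delivers the error bound on $\mathcal{U}$.
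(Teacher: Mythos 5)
Your proof is correct and follows exactly the route the paper intends: part (a) is Rockafellar's inequality (2.11) from \cite{rockafellar1976monotone}, and the rate in part (b) is the argument of \cite[Theorem 2.1]{luque1984asymptotic} with Luque's global growth condition (2.1) replaced by the local error bound that metric subregularity provides on a neighborhood of $\xi^\infty$ — which is precisely the substitution the paper describes before omitting the details. Your write-up supplies those details correctly, including the two delicate points: applying subregularity at the exact resolvent point $\hat{\xi}^{k+1}=\xi^{k+1}+e^{k+1}$ (which does converge to $\xi^\infty$) rather than at $\xi^k$, and breaking the circular dependence of $\|e^{k+1}\|$ on $\|\xi^{k+1}-\xi^k\|$ in criterion $(B)$ by using $\eta_k<1$ together with the Fej\'er bounds from part (a), which yields exactly the stated $\theta_k$.
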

\begin{proof}
 The inequality (\ref{ppa:decrease}) in part (a) follows directly from~\cite[(2.11)]{rockafellar1976monotone}.  Note that by Definition \ref{defn:metric11}, the metric subregularity of $\mathcal{T}$ at $\xi^{\infty}$ for the origin with modulus $\kappa\geq 0$ is equivalent to the existence of a   neighborhood $\mathcal{U}$ of $\xi^\infty$ such that for all $w\in\mathbb{X}$, 
\begin{equation}\label{ass:eb}
\textup{dist}(\xi, \mathcal{T}^{-1}(0)) \leq \kappa\|w\|, \; \forall\, \xi\in \mathcal{T}^{-1}(w)\cap \mathcal{U}.
\end{equation}
Thus,  to prove (\ref{ppa:rate}) in part (b),  one can
use {a similar
proof as in}~\cite[Theorem 2.1]{luque1984asymptotic} except for replacing  condition (2.1) in~\cite{luque1984asymptotic}
 by   condition (\ref{ass:eb}) with some neighborhood $\mathcal{U}$ of $\xi^\infty$.
 For brevity, we omit the details here.
\end{proof}

%
%
%
%
%
%

Denote $\mathbb{D} := \mathcal{Q}\times \mathbb{W}\times \mathbb{S}_+^n$.  For convenience, we rewrite problems (\ref{primalSDP}) and (\ref{dualSDP}) in the following equivalent forms, respectively:
\begin{equation}\label{defn:sdp:opt}
\begin{array}{cl}
\max &   -h(\mathcal{F}X)  - \langle C,X\rangle\\[5pt]
\text{s.t.} & b-\mathcal{A}X\in\mathcal{Q}^\circ, \; X\in\mathbb{S}_+^n
\end{array}
\end{equation}
and
\begin{equation}\label{dualSDP:alm}
\begin{array}{ll}
\min &  \vartheta(z):=-\langle b,y\rangle +h^*(-w)\\[5pt]
\text{s.t.} & \mathcal{A}^*y + \mathcal{F}^*w + S = C,\; z\in\mathbb{D}.
\end{array}
\end{equation}
Let $\inf\vartheta$ be the optimal value of $\vartheta$ for problem (\ref{dualSDP:alm}). The  Lagrangian function $l$ for problem (\ref{dualSDP:alm})
now takes the  form of
$$l(z,X):=
-\langle b,y\rangle  + h^*(-w)  + \langle X, \mathcal{A}^*y  + \mathcal{F}^*w + S -C\rangle,\;\forall\,(z,X)\in\mathbb{D}\times  \mathbb{S}^n.
$$
The functions $\psi$ and $\phi$ defined in (\ref{defn:phipsi}) can be rewritten as
$$
\psi(z) := \sup_{X\in\mathbb{S}^n} l(z,X),\; \forall\, z\in\mathbb{D},
\quad \quad
\phi(X) := \inf_{z\in\mathbb{D}} l(z,X),\; \forall\, X\in\mathbb{S}^n
$$
while the mappings $\mathcal{T}_l$, $\mathcal{T}_\phi$ and  $\mathcal{T}_\psi$ in (\ref{defn:Tl}) and (\ref{defn:Tphi}) can be reformulated as
$$\begin{array}{cc}
\mathcal{T}_l(z,X):=\big\{(u,v)\in\Z\times \mathbb{S}^n\; {\mid} \; (u,-v)\in\partial l(z,X)\big\}, \;\forall \,(z,X)\in\mathbb{D}\times \S^n,\\[5pt]
\mathcal{T}_\psi(z):=\partial \psi(z), \;\forall\, z\in\mathbb{D},
\quad \quad
\mathcal{T}_\phi(X):=- \partial \phi(X), \;\forall\, X\in\S^n.
\end{array}
$$
Let $c>0$ be a positive parameter.
For any $X\in\mathbb{S}^n$,
the augmented Lagrangian function associated with  problem (\ref{dualSDP:alm}) is given by
$$
L_{c}(z,X):= l(z,X)  + \frac{c}{2}\|\mathcal{A}^*y +\mathcal{F}^*w + S -C\|^2, \;\forall\, z\in\mathbb{D}.
$$
Given a sequence of scalars $c_k\uparrow c_\infty\leq \infty$ and a starting point $X^0\in\mathbb{S}^n_+$,
for $k\ge 0$, the $(k+1)$-th iteration  of the ALM is given by
\begin{equation}\label{iter:alm:main}
\left\{\begin{array}{ll}
z^{k+1} \approx \arg\min\{ \zeta_k(z):=L_{c_k}(z,X^k)\;\mid\; z\in\mathbb{D}\},\\[5pt]
X^{k+1} = X^k  + c_k(\mathcal{A}^*y^{k+1} + \mathcal{F}^*w^{k+1}+S^{k+1}  - C) .
\end{array}\right.\;  
\end{equation}
It is easy to check that if
$
\tilde{z}\in\arg\min \{\zeta_k(z)\;|\;z\in\mathbb{D}\}
$, then we must have $\widetilde{S} = \Pi_{\mathbb{S}_+^n}(C - \mathcal{A}^*\tilde{y} - \mathcal{F}^*\tilde{w} - c_k^{-1}X^k)$.
This motivates us to define for any $k\geq 0$,
\begin{equation}\label{defn:tildeS}
\mathcal{S}_k(y,w): = \Pi_{\mathbb{S}_+^n}(C - \mathcal{A}^*{y} - \mathcal{F}^*{w} - c_k^{-1}X^k), \; \forall\, (y,w)\in\mathcal{Q}\times \mathbb{W}.
\end{equation}
Thus,  for $k\ge 0$,  the $(k+1)$-th iteration  of the ALM in (\ref{iter:alm:main}) can be computed in the following manner
\begin{equation}\label{iter:alm:subS}
\left\{\begin{array}{rl}
(y^{k+1},w^{k+1}) &\approx \arg\min\{\zeta_k(y,w, \mathcal{S}_k(y,w)) \mid y\in\mathcal{Q},\;w\in\mathbb{W}\},\\[2pt]
S^{k+1} & = \mathcal{S}_k(y^{k+1}, w^{k+1}),\\[2pt]
X^{k+1} &= X^k  + c_k(\mathcal{A}^*y^{k+1} + \mathcal{F}^*w^{k+1}+S^{k+1}  - C) \\[2pt]
& = \Pi_{\mathbb{S}_+^n}(X^k  + c_k(\mathcal{A}^*y^{k+1} + \mathcal{F}^*w^{k+1}- C)).
\end{array}\right. 
\end{equation}
In accordance with Rockafellar's work in~\cite{rockafellar1976augmented}, we shall terminate the subproblem for solving $z^{k+1}$ in (\ref{iter:alm:main}) by the following three criteria:
\begin{flalign*}
(A')\quad  &\zeta_k(z^{k+1}) - \inf\zeta_k \leq \varepsilon_k^2/2c_k, \quad \varepsilon_k\geq 0, \quad \mbox{$\sum_{k=0}^\infty$}\, \varepsilon_k <  \infty, &\\[5pt]
(B') \quad & \zeta_k(z^{k+1}) - \inf\zeta_k \leq (\eta_k^2/2c_k)\|X^{k+1}  - X^k\|^2, \quad  \eta_k\geq 0, \quad \mbox{$\sum_{k=0}^\infty$}\,  \eta_k <  \infty, &\\[5pt]
(B'')\quad & \textup{dist}\,(0,\partial \zeta_k(z^{k+1}))\leq (\eta_k'/c_k)\|X^{k+1} - X^k\|, \quad 0\leq \eta_k' \to 0.&
\end{flalign*}

 A notable result of Rockafellar~\cite{rockafellar1976augmented} shows that
the ALM in (\ref{iter:alm:main}) for solving the dual problem (\ref{dualSDP}) with criteria $(A')$ and $(B')$ can be viewed as the PPA
applied to $\mathcal{T}_\phi = \partial \phi$ as {in} (\ref{ppa:iter}) with {stopping}
criteria $(A)$ and $(B)$. This will help us  to
obtain the  global convergence and the asymptotic superlinear convergence rates of the ALM for solving problem (\ref{dualSDP}). But first, we need the following simple property.

\begin{prop}\label{prop:alm:ineq}
Let $\{(z^k,X^k)\}$ be a sequence generated by the ALM (\ref{iter:alm:main}) under criterion $(B')$. Then for all $k\geq 0$ such that $\eta_k<1$, it holds that
$$\|X^{k+1} - X^k\|\leq (1-\eta_k)^{-1}\textup{dist}(X^k,\mathcal{T}_{\phi}^{-1}(0)).
$$
\end{prop}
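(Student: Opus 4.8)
The plan is to transfer the estimate to the proximal point framework via Rockafellar's equivalence. Let $J_k := (\mathcal{I} + c_k\mathcal{T}_\phi)^{-1}$ denote the resolvent (which is single-valued and everywhere defined since $\mathcal{T}_\phi=\partial\phi$ is maximal monotone), put $\widehat{X}^{k+1} := J_k(X^k)$ for the \emph{exact} proximal point, and let $e^{k+1} := \widehat{X}^{k+1} - X^{k+1}$ be the associated error, so that $X^{k+1} = \widehat{X}^{k+1} - e^{k+1}$. By Rockafellar's result \cite{rockafellar1976augmented} recalled just above the statement, running the ALM (\ref{iter:alm:main}) on the dual problem (\ref{dualSDP:alm}) with stopping criterion $(B')$ amounts to running the PPA (\ref{ppa:iter}) on $\mathcal{T}_\phi$ with criterion $(B)$; in particular $\|e^{k+1}\| \leq \eta_k\|X^{k+1} - X^k\|$. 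This reduces the claim to a property of a single exact proximal step.

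Next I would combine this error bound with the triangle inequality:
$\|X^{k+1} - X^k\| \leq \|X^{k+1} - \widehat{X}^{k+1}\| + \|\widehat{X}^{k+1} - X^k\| \leq \eta_k\|X^{k+1} - X^k\| + \|\widehat{X}^{k+1} - X^k\|$,
and since $\eta_k < 1$ this rearranges into $(1-\eta_k)\|X^{k+1} - X^k\| \leq \|\widehat{X}^{k+1} - X^k\|$. Thus it only remains to prove that the exact step length is controlled by the distance to the solution set, i.e. $\|\widehat{X}^{k+1} - X^k\| \leq \textup{dist}(X^k, \mathcal{T}_\phi^{-1}(0))$.

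For this last step, assume $\mathcal{T}_\phi^{-1}(0)\neq\emptyset$ (otherwise the right-hand side is $+\infty$ and there is nothing to prove) and fix any $\bar{X} \in \mathcal{T}_\phi^{-1}(0)$. Since $\bar{X}$ is a zero of the maximal monotone operator $\mathcal{T}_\phi$, it is a fixed point of $J_k$, so applying the firm nonexpansiveness of $\mathcal{I} - J_k$ — equivalently, the descent inequality (\ref{ppa:decrease}) of Theorem \ref{thm:ppa}(a) specialized to the exact step and to $\bar\xi = \bar{X}$ — gives $\|\widehat{X}^{k+1} - X^k\|^2 \leq \|X^k - \bar{X}\|^2 - \|\widehat{X}^{k+1} - \bar{X}\|^2 \leq \|X^k - \bar{X}\|^2$, hence $\|\widehat{X}^{k+1} - X^k\| \leq \|X^k - \bar{X}\|$. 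Taking the infimum over $\bar{X} \in \mathcal{T}_\phi^{-1}(0)$ yields $\|\widehat{X}^{k+1} - X^k\| \leq \textup{dist}(X^k, \mathcal{T}_\phi^{-1}(0))$, and combining this with the previous paragraph gives $\|X^{k+1} - X^k\| \leq (1-\eta_k)^{-1}\textup{dist}(X^k, \mathcal{T}_\phi^{-1}(0))$. No genuine difficulty arises here; the only points requiring care are citing the precise form of Rockafellar's criterion correspondence $(B')\leftrightarrow(B)$ and the standard fact that resolvents of maximal monotone operators and their complements are firmly nonexpansive.
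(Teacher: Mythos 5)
Your proposal is correct and follows essentially the same route as the paper: a triangle inequality splitting $X^{k+1}-X^k$ through the exact proximal point $\widehat{X}^{k+1}=X^{k+1}+e^{k+1}$, the bound $\|\widehat{X}^{k+1}-X^k\|\le \textup{dist}(X^k,\mathcal{T}_\phi^{-1}(0))$ from the descent inequality (\ref{ppa:decrease}), and the error bound $\|e^{k+1}\|\le\eta_k\|X^{k+1}-X^k\|$ coming from criterion $(B')$ (the paper makes the intermediate step $\|e^{k+1}\|\le(2c_k(\zeta_k(z^{k+1})-\inf\zeta_k))^{1/2}$ via Rockafellar's Proposition 6 explicit, whereas you cite the $(B')\leftrightarrow(B)$ correspondence, but this is the same fact). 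No gap.
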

\begin{proof}
By using Theorem \ref{thm:ppa} (a) and criterion $(B')$,
 we get for all $k\geq 0$ that
 $$\begin{array}{ll}
 \|X^{k+1} - X^k\|& \leq \|X^{k+1} +e^{k+1} - X^k\| + \|e^{k+1}\|\\[2pt]
 & \leq \textup{dist}(X^k, \mathcal{T}_{\phi}^{-1}(0)) + (2c_k(\zeta_k(z^{k+1}) - \inf \zeta_k))^{1/2}\\[2pt]
 & \leq \textup{dist}(X^k, \mathcal{T}_{\phi}^{-1}(0)) + \eta_k\|X^{k+1} - X^k\|,
 \end{array}
 $$
 where the second term in the second inequality comes from~\cite[Proposition 6]{rockafellar1976augmented}. 
Thus, the conclusion of this proposition holds.
\end{proof}
\begin{theorem}\label{thm:alm}
Assume that the optimal solution set $\mathcal{T}_{\phi}^{-1}(0)$ to problem (\ref{primalSDP}) is non-empty.
 Let $\{(z^k,X^k)\}$ be an infinite  sequence generated by the ALM in (\ref{iter:alm:subS})  with stopping criterion $(A')$.
Then, the whole sequence $\{X^k\}$ is bounded and converges to some $X^\infty \in \mathcal{T}_\phi^{-1}(0)$, and  the sequence $\{z^k\}$ satisfies for all $k\geq 0$,
 $z^k\in\mathbb{D}$ and
\begin{equation}\label{alm:feas:k}
\|\mathcal{A}^*y^{k+1} + \mathcal{F}^*w^{k+1} + S^{k+1}-C\|= c_k^{-1}\|X^{k+1} - X^k\|\to 0,
\end{equation}
\begin{equation}\label{alm:obj:k}
\vartheta(z^{k+1})-\inf \vartheta \leq \zeta_k(z^{k+1}) - \inf \zeta_k + (1/2c_k)(\|X^k\|^2 - \|X^{k+1}\|^2).
\end{equation}
Moreover, if  problem (\ref{dualSDP}) admits a non-empty and bounded solution set, then the sequence $\{z^k\}$ is also bounded, and all of its accumulation points are optimal solutions to problem (\ref{dualSDP}).

Regarding  the convergence rates of the ALM, we have the following {results.}\\[5pt]
(a)
If  $\mathcal{T}_\phi$ is metrically subregular at $X^\infty$ for the origin with modulus $\kappa_\phi>0$, then   under criterion $(B')$: there exists $\bar{k} \geq 0$ such that for all $k\geq \bar{k}$, $\eta_k<1$ and
\begin{equation}\label{alm:rate}
\textup{dist}\,(X^{k+1}, \mathcal{T}_{\phi}^{-1}(0))\leq \theta_k \,\textup{dist}\,(X^{k}, \mathcal{T}_{\phi}^{-1}(0)),
\end{equation}
\begin{equation}\label{alm:feas}
\|\mathcal{A}^*y^{k+1} + \mathcal{F}^*w^{k+1} + S^{k+1}-C\|\leq \tau_k\,\textup{dist}(X^k, \mathcal{T}_{\phi}^{-1}(0)),\end{equation}
\begin{equation}\label{alm:obj}
\vartheta(z^{k+1})-\inf \vartheta \leq \tau_k'\,\textup{dist}(X^k, \mathcal{T}_{\phi}^{-1}(0)),\end{equation}
where
$$
\begin{array}{ll}
1> \theta_k = \left(\kappa_\phi/\sqrt{\kappa_\phi^2+ c_k^2}  + 2\eta_k\right)(1-\eta_k)^{-1}\to\theta_\infty = \kappa_\phi/\sqrt{\kappa_\phi^2+ c_\infty^2} 
\quad ( \theta_\infty = 0 \; \textup{if}\;  c_\infty = \infty),\\[5pt]
\tau_k=c_k^{-1}(1-\eta_k)^{-1}\to\tau_{\infty} = 1/c_{\infty} \; ( \tau_\infty = 0 \; \textup{if}\;  c_\infty = \infty),\\[5pt]
\tau_k' = \tau_k\left(\eta_k^2\|X^{k+1} - X^{k}\| + \|X^{k+1}\| + \|X^{k}\|\right)/2\to\tau'_{\infty} = \|X^\infty\|/c_{\infty} \; ( \tau'_\infty = 0 \; \textup{if}\;  c_\infty = \infty).\\[5pt]
\end{array}
$$
%
(b) If in addition to $(B')$ and the metric subregularity of $\mathcal{T}_\phi$ at $X^\infty$ for the origin, one has
$(B'')$, $\mathcal{T}_\psi^{-1}(0)$ is non-empty and bounded and the following condition on $\mathcal{T}_l$:   there exist two constants $\kappa_l\geq 0$ and $\epsilon >0$ such that for any $(z,X)\in \mathbb{Z}\times \mathbb{S}^n$ satisfying $\textup{dist}((z,X), \mathcal{T}_{\psi}^{-1}(0)\times \{X^\infty\})\leq \epsilon$,
\begin{equation}\label{alm:assump}
\begin{array}{ll}
\textup{dist}((z,X),\mathcal{T}_l^{-1}(0))\leq \kappa_l\,\textup{dist}(0,\mathcal{T}_l(z,X)).
\end{array}
\end{equation}
Then there exists $\tilde{k} \geq 0$ such that  for all $k\geq \tilde{k}$, $\eta_k <1$ and
\begin{equation}\label{thm:rate:dual}
\textup{dist}(z^{k+1}, \mathcal{T}_\psi^{-1}(0))\leq \theta_k'\textup{dist}(X^k, \mathcal{T}_{\phi}^{-1}(0)),
\end{equation}
where $$\theta_k'= \kappa_lc_k^{-1}(1+\eta_k')(1-\eta_k')^{-1}\to\theta_\infty' = \kappa_l/c_\infty  \; ( \theta'_\infty = 0 \; \textup{if}\;  c_\infty = \infty).$$
\end{theorem}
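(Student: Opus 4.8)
The backbone of the proof is Rockafellar's identification of the ALM (\ref{iter:alm:main}) for problem (\ref{dualSDP:alm}) with the PPA (\ref{ppa:iter}) applied to $\mathcal{T}_\phi=\partial\phi$, under which the stopping rules $(A')$, $(B')$ become exactly $(A)$, $(B)$ (see \cite[Proposition 6]{rockafellar1976augmented}). The plan is therefore to first invoke Theorem \ref{thm:ppa}(b) with $\mathcal{T}=\mathcal{T}_\phi$: this yields that $\{X^k\}$ is bounded, satisfies (\ref{ppa:decrease}), and converges to some $X^\infty\in\mathcal{T}_\phi^{-1}(0)$. The equality in (\ref{alm:feas:k}) is read off the update formula in (\ref{iter:alm:subS}), and the convergence to $0$ follows since $\{X^k\}$ is Cauchy and $c_k\ge c_0>0$. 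For (\ref{alm:obj:k}) I would substitute $l(z^{k+1},X^k)=\vartheta(z^{k+1})+\langle X^k,R^{k+1}\rangle$, with $R^{k+1}:=\mathcal{A}^*y^{k+1}+\mathcal{F}^*w^{k+1}+S^{k+1}-C=c_k^{-1}(X^{k+1}-X^k)$, into $\zeta_k(z^{k+1})=l(z^{k+1},X^k)+\tfrac{c_k}{2}\|R^{k+1}\|^2$ and complete the square to get $\vartheta(z^{k+1})=\zeta_k(z^{k+1})+\tfrac{1}{2c_k}(\|X^k\|^2-\|X^{k+1}\|^2)$; the inequality then follows from $\inf\zeta_k\le\inf\vartheta$, which holds because $\zeta_k$ and $\vartheta$ coincide on the feasible set of (\ref{dualSDP:alm}).

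When the dual solution set is nonempty and bounded, criterion $(A')$ combined with (\ref{alm:feas:k}) and (\ref{alm:obj:k}) shows that $z^{k+1}$ is asymptotically feasible for (\ref{dualSDP:alm}) and that $\limsup_k\vartheta(z^{k+1})\le\inf\vartheta$. Since $\arg\min$ of $\vartheta+\delta_{\mathbb{D}}$ over the affine feasible set is nonempty and bounded, the relevant recession cone is trivial, so the near-feasible near-optimal sublevel sets of $\vartheta+\delta_{\mathbb{D}}$ are bounded; hence $\{z^k\}$ is bounded. Lower semicontinuity of $\vartheta+\delta_{\mathbb{D}}$ together with closedness of $\mathbb{D}$ and of the feasible set then forces every accumulation point of $\{z^k\}$ to be optimal, and combining this with boundedness also gives $\vartheta(z^{k+1})\to\inf\vartheta$ and $\textup{dist}(z^{k+1},\mathcal{T}_\psi^{-1}(0))\to0$.

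Part (a) is bookkeeping on top of Theorem \ref{thm:ppa}(b): (\ref{alm:rate}) is its conclusion with $\kappa=\kappa_\phi$; (\ref{alm:feas}) follows by combining (\ref{alm:feas:k}) with Proposition \ref{prop:alm:ineq}; and (\ref{alm:obj}) follows from (\ref{alm:obj:k}), criterion $(B')$ (which bounds $\zeta_k(z^{k+1})-\inf\zeta_k$ by $\tfrac{\eta_k^2}{2c_k}\|X^{k+1}-X^k\|^2$), Proposition \ref{prop:alm:ineq}, and the reverse triangle inequality $\|X^k\|^2-\|X^{k+1}\|^2\le\|X^{k+1}-X^k\|(\|X^k\|+\|X^{k+1}\|)$, keeping exactly one factor $\|X^{k+1}-X^k\|$ intact when applying the proposition. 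The stated asymptotic values of $\theta_k,\tau_k,\tau_k'$ then drop out from $\eta_k\to0$, $\|X^{k+1}-X^k\|\to0$, $\|X^k\|\to\|X^\infty\|$ and $c_k\uparrow c_\infty$ (with the obvious specialization to $0$ when $c_\infty=\infty$).

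Part (b) is the crux. I would show $\textup{dist}(0,\mathcal{T}_l(z^{k+1},X^{k+1}))\le c_k^{-1}(1+\eta_k')\|X^{k+1}-X^k\|$ and feed this into the assumed error bound (\ref{alm:assump}). The algebraic key is that the $z$-subdifferential of the augmented Lagrangian at the old multiplier equals the $z$-subdifferential of the ordinary Lagrangian at the updated multiplier, i.e.\ $\partial\zeta_k(z^{k+1})=\partial_z l(z^{k+1},X^{k+1})$ (an affine-plus-quadratic perturbation identity, using $z^{k+1}\in\mathbb{D}$); combined with $\mathcal{T}_l(z^{k+1},X^{k+1})=\partial_z l(z^{k+1},X^{k+1})\times\{-c_k^{-1}(X^{k+1}-X^k)\}$ and criterion $(B'')$, this gives the displayed bound. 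To invoke (\ref{alm:assump}) one needs $(z^{k+1},X^{k+1})$ eventually inside the prescribed $\epsilon$-tube around $\mathcal{T}_\psi^{-1}(0)\times\{X^\infty\}$, which follows from $X^{k+1}\to X^\infty$ and $\textup{dist}(z^{k+1},\mathcal{T}_\psi^{-1}(0))\to0$ established above. Since $\mathcal{T}_l^{-1}(0)=\mathcal{T}_\psi^{-1}(0)\times\mathcal{T}_\phi^{-1}(0)$ (from the KKT characterization via \cite[Theorem 30.4]{rockafellar2015convex}), projecting onto the $z$-coordinate gives $\textup{dist}(z^{k+1},\mathcal{T}_\psi^{-1}(0))\le\textup{dist}((z^{k+1},X^{k+1}),\mathcal{T}_l^{-1}(0))\le\kappa_l\,\textup{dist}(0,\mathcal{T}_l(z^{k+1},X^{k+1}))\le\kappa_l c_k^{-1}(1+\eta_k')\|X^{k+1}-X^k\|$, and one final application of Proposition \ref{prop:alm:ineq} produces (\ref{thm:rate:dual}) with $\theta_k'\to\kappa_l/c_\infty$. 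The main obstacle I anticipate is precisely this last step: reconciling the fact that here $\textup{dist}(0,\partial\zeta_k(z^{k+1}))$ is controlled through $(B'')$ rather than $(B')$ with the $(1-\eta_k)^{-1}$ estimate of Proposition \ref{prop:alm:ineq}, and verifying that all the error constants collapse to $\kappa_l/c_\infty$ (and to $0$ when $c_\infty=\infty$); a secondary obstacle is making the recession-cone argument for boundedness of $\{z^k\}$ fully rigorous.
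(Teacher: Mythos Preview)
Your proposal is correct and follows essentially the same route as the paper: invoke Rockafellar's ALM--PPA identification (the paper cites \cite[Theorem~4]{rockafellar1976augmented} directly for convergence and boundedness of $\{z^k\}$, where you spell out the recession-cone argument), derive (\ref{alm:obj:k}) from the identity $\vartheta(z^{k+1})-\zeta_k(z^{k+1})=(1/2c_k)(\|X^k\|^2-\|X^{k+1}\|^2)$ and $\inf\zeta_k\le\inf\vartheta$, get part~(a) from Theorem~\ref{thm:ppa}(b) plus Proposition~\ref{prop:alm:ineq}, and for part~(b) use $\partial\zeta_k(z^{k+1})=\partial_z l(z^{k+1},X^{k+1})$ (this is \cite[(4.21)]{rockafellar1976augmented}) together with $(B'')$ to bound $\textup{dist}(0,\mathcal{T}_l(z^{k+1},X^{k+1}))$ and then apply (\ref{alm:assump}) and Proposition~\ref{prop:alm:ineq}. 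Your anticipated ``obstacle'' is real but harmless: the paper's proof actually produces $(1-\eta_k)^{-1}$ (from Proposition~\ref{prop:alm:ineq}, which relies on $(B')$), so the $(1-\eta_k')^{-1}$ in the displayed $\theta_k'$ is a typo in the statement; either way the limit is $\kappa_l/c_\infty$.
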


\begin{proof}
The convergence on the sequences $\{X^k\}$ and $\{z^k\}$
follows from~\cite[Theorem 4]{rockafellar1976augmented}. The inequality (\ref{alm:feas:k}) can be obtained by the definition of $X^{k+1}$ in (\ref{iter:alm:subS}) and the convergence of $\{X^k\}$. By noticing of
$ \vartheta(z^{k+1}) -  \zeta_k(z^{k+1}) = (1/2c_k)(\|X^{k}\|^2 - \|X^{k+1}\|^2)$ and $\inf\zeta_k\leq \inf\vartheta$~\cite[(4.16)--(4.17)]{rockafellar1976augmented}, we get (\ref{alm:obj:k}).
Next, we prove the results on the convergence rates.\\[5pt]
(a) The inequality (\ref{alm:rate}) follows from Theorem \ref{thm:ppa} (b) directly. By combining (\ref{alm:obj:k}), (\ref{alm:feas:k}),
 (\ref{alm:rate}), Proposition \ref{prop:alm:ineq} and criterion $(B')$, we can obtain the inequalities (\ref{alm:feas}) and (\ref{alm:obj}).\\[5pt]
(b) Since $\mathcal{T}_{\psi}^{-1}(0)$ is  assumed to be non-empty and bounded,  we know that the sequence $\{z^k\}$ is bounded and $\textup{dist}(z^k, \mathcal{T}_{\psi}^{-1}(0))\to 0$.
{Thus}
there exists $\tilde{k}\geq 0$ such that for all $k\geq \tilde{k}$, $\eta_k < 1$ and
  $\textup{dist}((z^{k+1},X^{k+1}), \mathcal{T}_{\psi}^{-1}(0)\times \{X^\infty\})\leq \epsilon$. By using condition (\ref{alm:assump}), we have for all $k\geq \tilde{k}$,
 $$
\begin{array}{rl}
\textup{dist}(z^{k+1}, \mathcal{T}_{\psi}^{-1}(0))
\leq 
& \textup{dist}((z^{k+1},X^{k+1}), \mathcal{T}_{l}^{-1}(0))
\\[3pt]
\leq   & \kappa_l\,\textup{dist}(0,\mathcal{T}_l(z^{k+1}, X^{k+1}))
\\[3pt]
\leq  & \kappa_l\,(\textup{dist}^2(0,\partial \zeta_k(z^{k+1})) + c_k^{-2}\|X^{k+1} - X^k\|^2)^{1/2}\\[3pt]
\leq  & \kappa_l(1+\eta_k')c_k^{-1}\|X^{k+1} - X^k\|\\[3pt]
\leq & \kappa_l(1+\eta_k')c^{-1}_k(1-\eta_k)^{-1}\textup{dist}(X^k,\mathcal{T}_{\phi}^{-1}(0)),
\end{array}
$$
where the third inequality comes from~\cite[(4.21)]{rockafellar1976augmented}, the forth inequality 
is due to criterion $(B'')$ and the last inequality follows from Proposition \ref{prop:alm:ineq}.
Thus for $k\geq \tilde{k}$,
the inequality (\ref{thm:rate:dual}) holds.
\end{proof}
\begin{remark}
In Theorem \ref{thm:alm},  under the metric subregularity of $\mathcal{T}_{\phi}$  at $X^\infty$ for the origin,
 the  sequence $\{X^k\}$ is proved to converge Q-(super)linearly to the optimal solution set $\mathcal{T}_{\phi}^{-1}(0)$ to problem (\ref{primalSDP}), while the feasibility and the objective function value of  problem (\ref{dualSDP:alm})  converge at least R-(super)linearly. For the  asymptotic R-superlinear convergence of the iteration sequence $\{z^k\}$ itself, one has to impose a stronger condition on $\mathcal{T}_l$  as in  part (b). In numerical computations one does not need $c_k$ to converge to $+\infty$, instead one can just progressively choose $c_k$ to be large enough, such as $c_k\approx \kappa_{\phi}$, to achieve fast linear convergence.
{Of course one does not know $\kappa_\phi$ in practice, and  hence the adjustment of $c_k$ to achieve
fast linear convergence is always an important issue in the practical implementation of the ALM.}
The metric subregularity of $\mathcal{T}_{\phi}$ at $X^\infty$ for the origin is satisfied in one of the two situations in Corollary \ref{coro:sdpTphi}.
Another situation for ensuring $\mathcal{T}_{\phi}$ to be metrically subregular at $X^\infty$ for the origin
is when  the function $h$  is twice continuously differentiable
and the ``no-gap'' second order sufficient condition holds at ${X}^\infty$~\cite[Theorem 3.137]{bonnans2013perturbation}. Thus, we can see that the metric subregularity of $\mathcal{T}_{\phi}$ at $X^\infty$ for the origin is quite  mild. However,
the metric subregularity of $\mathcal{T}_{l}$ can be more restrictive (refer to Remark \ref{remark:Tl}).

\end{remark}

\subsection{On the implementable stopping criteria for solving the ALM subproblems}

In this subsection, we shall study the implementation issues for applying the ALM to solve problem (\ref{dualSDP:alm}).
While it is relatively easy to  implement
criterion $(B'')$~\cite[(4.6)]{rockafellar1976augmented},  it can be a challenging task to execute
 criteria $(A')$ and $(B')$ since the value $\inf\zeta_k$ is {not} available.
In the following, we shall take the least squares SDP problem with equality constraints, i.e., $h(w) = \frac{1}{2}\|w-d\|^2$ for any $w\in\mathbb{W}$ with given $d\in\mathbb{W}$ and $\mathcal{Q} = \mathbb{Y}$ in problem (\ref{defn:sdp:opt}),
 as an example to illustrate how to implement criteria $(A')$ and $(B')$.
Denote $\mathcal{X}:=\{X\in\mathbb{S}_+^n\,\mid\, \mathcal{A}X - b=0\}$ as the feasible set to problem (\ref{defn:sdp:opt}) in this case.
Here,  we always assume that there exists a strictly feasible point  $\widehat{X}\in\mathcal{X}$ such that
\begin{equation}\label{defn:strictfeas}
\mathcal{A}\widehat{X} - b=0, \quad \quad \widehat{X}\succ 0.
\end{equation}
Denote  $\sigma_{\textup{min}}(\mathcal{A})$ as the smallest  positive singular value of $\mathcal{A}$ and define
\begin{equation}\label{defn:barmu}
\bar{\mu}: = \sigma_{\textup{min}}^{-1}(\mathcal{A})\max\left\{ \lambda_{\textup{min}}^{-1}(\widehat{X}), \; 1+  \lambda^{-1}_{\textup{min}}(\widehat{X})\|\widehat{X}\|\right\}.
\end{equation}
 The following proposition provides an upper bound for the
 distance of  an $X\in\mathbb{S}_+^n$ to the set $\mathcal{X}$.
 \begin{prop}\label{prop:feasalm}
Let $X\in\mathbb{S}_+^n$ be given. Then
$$
\|X - \Pi_{\mathcal{X}}(X)\|\leq  \sigma_{\textup{min}}^{-1}(\mathcal{A})\big(1+\lambda_{\textup{min}}^{-1}(\widehat{X})\|X - \widehat{X}\| \big)\|b-\mathcal{A}X\| \leq
 \bar{\mu}(1+\|X\|)\|b-\mathcal{A}X\|.
$$
\end{prop}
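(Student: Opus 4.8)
The plan is to exhibit a single, explicitly constructed feasible point $\bar X\in\mathcal X$ and estimate $\|X-\bar X\|$; since $\mathcal X$ is a non-empty closed convex set (non-empty by the strict feasibility assumption (\ref{defn:strictfeas})) we have $\|X-\Pi_{\mathcal X}(X)\|\le\|X-\bar X\|$, which gives the first inequality, and the second then follows from elementary norm estimates. I would first dispose of the trivial case $\mathcal AX=b$, where $X\in\mathcal X$ and both sides vanish. Otherwise set $\delta:=\sigma_{\textup{min}}^{-1}(\mathcal A)\|b-\mathcal AX\|>0$ and $\lambda:=\delta/\lambda_{\textup{min}}(\widehat X)$.

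Assume first $\lambda\le1$ and define the convex combination $X_\lambda:=(1-\lambda)X+\lambda\widehat X$. Since $X\succeq0$ and $\widehat X\succ0$, we have $X_\lambda\succeq\lambda\widehat X\succeq\lambda\lambda_{\textup{min}}(\widehat X)I_n$, hence $\lambda_{\textup{min}}(X_\lambda)\ge\lambda\lambda_{\textup{min}}(\widehat X)=\delta$. Because $b=\mathcal A\widehat X$, the vector $b-\mathcal AX=\mathcal A(\widehat X-X)$ lies in $\textup{Range}(\mathcal A)$, so the equation $\mathcal AD=(1-\lambda)(b-\mathcal AX)$ admits a solution; taking the minimum-norm one gives $\|D\|\le\sigma_{\textup{min}}^{-1}(\mathcal A)\|(1-\lambda)(b-\mathcal AX)\|\le\delta\le\lambda_{\textup{min}}(X_\lambda)$. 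Therefore $\bar X:=X_\lambda+D\succeq0$, while $\mathcal A\bar X-b=(1-\lambda)(\mathcal AX-b)+\mathcal AD=0$, so $\bar X\in\mathcal X$. Finally $X-\bar X=\lambda(X-\widehat X)-D$, whence $\|X-\bar X\|\le\lambda\|X-\widehat X\|+\|D\|\le\delta\lambda_{\textup{min}}^{-1}(\widehat X)\|X-\widehat X\|+\delta$, which is exactly $\sigma_{\textup{min}}^{-1}(\mathcal A)\big(1+\lambda_{\textup{min}}^{-1}(\widehat X)\|X-\widehat X\|\big)\|b-\mathcal AX\|$.

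In the remaining case $\lambda>1$ I would simply take $\bar X=\widehat X\in\mathcal X$; then $\delta\lambda_{\textup{min}}^{-1}(\widehat X)=\lambda>1$, so $\|X-\Pi_{\mathcal X}(X)\|\le\|X-\widehat X\|\le\delta\lambda_{\textup{min}}^{-1}(\widehat X)\|X-\widehat X\|\le\sigma_{\textup{min}}^{-1}(\mathcal A)\big(1+\lambda_{\textup{min}}^{-1}(\widehat X)\|X-\widehat X\|\big)\|b-\mathcal AX\|$, establishing the first inequality in all cases. For the second inequality I would use $\|X-\widehat X\|\le\|X\|+\|\widehat X\|$ to obtain $1+\lambda_{\textup{min}}^{-1}(\widehat X)\|X-\widehat X\|\le\big(1+\lambda_{\textup{min}}^{-1}(\widehat X)\|\widehat X\|\big)+\lambda_{\textup{min}}^{-1}(\widehat X)\|X\|\le\max\big\{\lambda_{\textup{min}}^{-1}(\widehat X),\,1+\lambda_{\textup{min}}^{-1}(\widehat X)\|\widehat X\|\big\}(1+\|X\|)$, using $a+bs\le\max\{a,b\}(1+s)$ for $a,b,s\ge0$; multiplying through by $\sigma_{\textup{min}}^{-1}(\mathcal A)\|b-\mathcal AX\|$ and recalling the definition (\ref{defn:barmu}) of $\bar\mu$ gives the claimed bound.

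The one mildly delicate point, which I would be most careful about, is the construction of $\bar X$ in the case $\lambda\le1$: the parameter $\lambda$ must be large enough that $X_\lambda$ is positive definite with margin $\delta$ exceeding $\|D\|$, yet small enough that the term $\lambda\|X-\widehat X\|$ does not overshoot the target bound — the choice $\lambda=\sigma_{\textup{min}}^{-1}(\mathcal A)\|b-\mathcal AX\|/\lambda_{\textup{min}}(\widehat X)$ is precisely the value that balances these competing requirements, and one also needs $b-\mathcal AX\in\textup{Range}(\mathcal A)$ so that $\mathcal AD=(1-\lambda)(b-\mathcal AX)$ is solvable with $\|D\|$ controlled by $\sigma_{\textup{min}}^{-1}(\mathcal A)$.
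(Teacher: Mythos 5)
Your proof is correct and takes essentially the same route as the paper's: the feasible comparison point is a convex combination of $X$ (corrected by the minimum-norm solution of the linearized feasibility equation, bounded via $\sigma_{\textup{min}}^{-1}(\mathcal{A})$) with the Slater point $\widehat{X}$, whose positive-definiteness margin absorbs the correction. The only cosmetic difference is your mixing parameter $\lambda=\sigma_{\textup{min}}^{-1}(\mathcal{A})\|b-\mathcal{A}X\|/\lambda_{\textup{min}}(\widehat{X})$, which can exceed $1$ and forces the extra trivial case, whereas the paper uses $\tau=\lambda/(1+\lambda)\in[0,1)$ and avoids the split.
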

\begin{proof}
Denote $$
 u: = \mathcal{A}X - b, \quad
\Delta X:=-\mathcal{A}^*(\mathcal{A}\mathcal{A}^*)^{\dagger} u, \quad\widecheck{X} := X + \Delta X.$$
Since $b,u\in\textup{Range}\,(\mathcal{A})$, it holds that
{$$
\mathcal{A}\widecheck{X} = \mathcal{A}X + \mathcal{A}\Delta X = b + u-\mathcal{A}\mathcal{A}^*(\mathcal{A}\mathcal{A}^*)^{\dagger} u = b,
\quad \quad
\|\Delta X\|\leq \sigma^{-1}_{\textup{min}}(\mathcal{A})\|u\|.
$$}
Define
$$
{\tau}:=
  \frac{\|u\|}{\|u\| +\sigma_{\textup{min}}(\mathcal{A}) \lambda_{\textup{min}}(\widehat{X})}, \quad  \quad
X': = (1-{\tau})\widecheck{X} + {\tau} \widehat{X}.
$$
Obviously $\tau\in [0,1]$ and $X'\in\mathcal{X}$.
Therefore, we obtain that
$$
\begin{array}{ll}
\|X - \Pi_{\mathcal{X}}(X)\|  &\leq  \|X - X'\|
\leq \|\Delta X\| + \tau\|X-\widehat{X}\| 
 \; \leq \; \sigma_{\textup{min}}^{-1}(\mathcal{A})(1+（\lambda_{\textup{min}}^{-1}(\widehat{X})\|X-\widehat{X}\|)\|u\| \\[5pt]
& \leq \sigma_{\textup{min}}^{-1}(\mathcal{A})(1+\lambda_{\textup{min}}^{-1}(\widehat{X})\|\widehat{X}\| + \lambda_{\textup{min}}^{-1}(\widehat{X})\|{X}\|)\|u\| \\[5pt]
& \leq \sigma_{\textup{min}}^{-1}(\mathcal{A})\max\left\{ \lambda_{\textup{min}}^{-1}(\widehat{X}), \; 1+  \lambda^{-1}_{\textup{min}}(\widehat{X})\|\widehat{X}\|\right\}(1+\|X\|)\|u\|,
\end{array}
$$
which completes the proof of this proposition.
\end{proof}
For any $k\geq 0$,
denote $f_k(X):= -h(\mathcal{F}X) - \langle C,X\rangle - \|X - X^k\|^2/2c_k$ for $X\in\mathbb{S}^n$.
\begin{prop}\label{prop:criteria}
Assume that $\mathcal{A}:\mathbb{S}^n\to\mathbb{Y}$ is surjective and that condition (\ref{defn:strictfeas}) is satisfied. Let  $\bar{\mu}$ be given by (\ref{defn:barmu}) and $\bar{\nu}$ be any positive constant.
Suppose that for some $k\geq 0$, $\varepsilon_k>0$, $\eta_k>0$ and $X^k\in\mathbb{S}_+^n$ is not an optimal solution to  problem  (\ref{defn:sdp:opt}).
Let $\{z^{k,j}\}_{j\geq 0}$ be any sequence such that
$\zeta_k(z^{k,j})\to\inf \zeta_k$
with $(y^{k,j},w^{k,j})\in \mathbb{Y}\times \mathbb{W}$ and $S^{k,j} = \mathcal{S}_k(y^{k,j}, w^{k,j})$, where $\mathcal{S}_k(\cdot)$ is defined as in (\ref{defn:tildeS}).
For any $j\geq 0$, let $$
\begin{array}{cc}
X^{k,j} := \Pi_{\mathbb{S}_+^n}(X^k + c_k(\mathcal{A}^*y^{k,j} + \mathcal{F}^*w^{k,j}-C)),\\[3pt]
u^{j} := \mathcal{A}X^{k,j} - b,\quad
t_{j}: =\bar{\nu}^{-1}\min\left\{\varepsilon_k^2/2c_k,\; (\eta_k^2/2c_k)\|X^{k,j}  - X^k\|^2\right\}.
\end{array}$$
Then there exists $\bar{j}\geq 0$ such that for any $j\geq \bar{j}$,
\begin{equation}\label{prop:alm:criteria:ineq}
\zeta_k(z^{k,j}) - f_k(X^{k,j})\leq t_j,\quad
(1+\|X^{k,j}\|)\|u^{k,j}\|\leq \min\left\{1,\sqrt{c_k}, \sqrt{t_j}/\|\nabla f_k(X^{k,j})\|\right\}\sqrt{t_j}\, .
\end{equation}
Consequently, for all $j\geq \bar{j}$,
$$
\zeta_k(z^{k,j}) - \inf \zeta_k \leq \left(1+ \bar{\mu} + \frac{1}{2}\lambda_{\max}(\mathcal{F}^*\mathcal{F})\bar{\mu}^2 +\frac{1}{2}\bar{\mu}^2\right)t_j.
$$

\end{prop}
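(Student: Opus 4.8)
The plan is to derive the ``Consequently'' bound from the two inequalities in (\ref{prop:alm:criteria:ineq}), the feasibility estimate of Proposition \ref{prop:feasalm}, and a dual description of the subproblem value $\inf\zeta_k$. In the present case ($h(w)=\tfrac12\|w-d\|^2$ and $\mathcal{Q}=\mathbb{Y}$), a direct computation of $\phi(X)=\inf_{z\in\mathbb{D}}l(z,X)$ — minimizing over $y$ forces $\mathcal{A}X=b$, minimizing over $w$ produces $-h^{**}(\mathcal{F}X)=-h(\mathcal{F}X)$, minimizing over $S\in\mathbb{S}_+^n$ forces $X\succeq 0$ — gives $\phi(X)=-h(\mathcal{F}X)-\langle C,X\rangle-\delta_{\mathcal{X}}(X)$, so by Rockafellar's identification of the ALM subproblem value with the value of the primal proximal problem~\cite[(4.16)--(4.17)]{rockafellar1976augmented},
$$\inf\zeta_k\;=\;\sup_{X}\Big\{\phi(X)-\tfrac{1}{2c_k}\|X-X^k\|^2\Big\}\;=\;\sup_{X\in\mathcal{X}}f_k(X).$$
Set $\bar X^{k,j}:=\Pi_{\mathcal{X}}(X^{k,j})$, which is legitimate since $X^{k,j}=\Pi_{\mathbb{S}_+^n}(\cdot)\in\mathbb{S}_+^n$ and $\mathcal{X}\neq\emptyset$ by (\ref{defn:strictfeas}). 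Then $\inf\zeta_k\ge f_k(\bar X^{k,j})$, hence for every $j\ge\bar j$,
$$\zeta_k(z^{k,j})-\inf\zeta_k\;\le\;\big(\zeta_k(z^{k,j})-f_k(X^{k,j})\big)+\big(f_k(X^{k,j})-f_k(\bar X^{k,j})\big)\;\le\;t_j+\big(f_k(X^{k,j})-f_k(\bar X^{k,j})\big),$$
where the last step uses the first bound of (\ref{prop:alm:criteria:ineq}); it remains to estimate the final difference.

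Here I would use that $f_k$ is quadratic with constant Hessian $\nabla^2 f_k=-(\mathcal{F}^*\mathcal{F}+c_k^{-1}\mathcal{I})$, so the exact second-order Taylor expansion at $X^{k,j}$ together with Cauchy--Schwarz and $0\preceq\mathcal{F}^*\mathcal{F}\preceq\lambda_{\max}(\mathcal{F}^*\mathcal{F})\mathcal{I}$ gives, with $\delta_j:=\|X^{k,j}-\bar X^{k,j}\|=\textup{dist}(X^{k,j},\mathcal{X})$,
$$f_k(X^{k,j})-f_k(\bar X^{k,j})=\langle\nabla f_k(X^{k,j}),X^{k,j}-\bar X^{k,j}\rangle+\tfrac12\big\langle X^{k,j}-\bar X^{k,j},(\mathcal{F}^*\mathcal{F}+c_k^{-1}\mathcal{I})(X^{k,j}-\bar X^{k,j})\big\rangle\;\le\;\|\nabla f_k(X^{k,j})\|\,\delta_j+\tfrac12\lambda_{\max}(\mathcal{F}^*\mathcal{F})\,\delta_j^2+\tfrac{1}{2c_k}\,\delta_j^2.$$
Proposition \ref{prop:feasalm} gives $\delta_j\le\bar\mu(1+\|X^{k,j}\|)\|b-\mathcal{A}X^{k,j}\|$, and the second bound in (\ref{prop:alm:criteria:ineq}) then yields
$$\delta_j\;\le\;\bar\mu\,\min\!\Big\{1,\ \sqrt{c_k},\ \sqrt{t_j}\big/\|\nabla f_k(X^{k,j})\|\Big\}\,\sqrt{t_j}.$$

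To conclude I would substitute this into the three terms above, selecting in each a convenient branch of the minimum: bounding the minimum by $\sqrt{t_j}/\|\nabla f_k(X^{k,j})\|$ gives $\|\nabla f_k(X^{k,j})\|\,\delta_j\le\bar\mu t_j$ (and $0\le\bar\mu t_j$ trivially when $\nabla f_k(X^{k,j})=0$); bounding the minimum by $1$ gives $\tfrac12\lambda_{\max}(\mathcal{F}^*\mathcal{F})\,\delta_j^2\le\tfrac12\lambda_{\max}(\mathcal{F}^*\mathcal{F})\bar\mu^2 t_j$; bounding the minimum by $\sqrt{c_k}$ gives $\tfrac{1}{2c_k}\,\delta_j^2\le\tfrac12\bar\mu^2 t_j$. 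Adding these to the leading $t_j$ produces exactly $\zeta_k(z^{k,j})-\inf\zeta_k\le\big(1+\bar\mu+\tfrac12\lambda_{\max}(\mathcal{F}^*\mathcal{F})\bar\mu^2+\tfrac12\bar\mu^2\big)t_j$, which is the assertion.

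As for the preliminary claim that (\ref{prop:alm:criteria:ineq}) holds for all large $j$: I would obtain it from $\zeta_k(z^{k,j})\to\inf\zeta_k$, which by \cite[Proposition 6]{rockafellar1976augmented} forces $X^{k,j}\to(\mathcal{I}+c_k\mathcal{T}_\phi)^{-1}(X^k)\in\mathcal{X}$; consequently the left-hand sides of both displays in (\ref{prop:alm:criteria:ineq}) tend to $0$ (the first by continuity of $f_k$ and $f_k$ of the exact proximal point being $\inf\zeta_k$, the second by feasibility of that point), while — since $X^k$ is not optimal, so that $(\mathcal{I}+c_k\mathcal{T}_\phi)^{-1}(X^k)\neq X^k$ — the quantity $t_j$ converges to a strictly positive limit. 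The only step above that is not a routine estimate is the opening identity $\inf\zeta_k=\sup_{X\in\mathcal{X}}f_k(X)$, i.e. the inf--sup interchange in the augmented Lagrangian subproblem; but this is precisely Rockafellar's dual characterization of the proximal step and can be quoted rather than re-proved, so no genuine obstacle remains.
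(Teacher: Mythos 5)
Your proposal is correct and follows essentially the same route as the paper's proof: the same decomposition $\zeta_k(z^{k,j})-\inf\zeta_k\le(\zeta_k(z^{k,j})-f_k(X^{k,j}))+(f_k(X^{k,j})-f_k(\Pi_{\mathcal{X}}(X^{k,j})))$, the same exact quadratic expansion of $f_k$, the same use of Proposition \ref{prop:feasalm}, and the same branch-by-branch exploitation of the minimum in the second inequality of (\ref{prop:alm:criteria:ineq}). The only cosmetic differences are that you invoke \cite[Proposition 6]{rockafellar1976augmented} for the convergence of $X^{k,j}$ where the paper cites \cite[Theorems 17 \& 18]{rockafellar1974conjugate}, and that you spell out the identity $\inf\zeta_k=\sup_{X\in\mathcal{X}}f_k(X)$ which the paper uses implicitly.
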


\begin{proof}
Since $f_k(\cdot)$ is strongly concave, $\zeta_k(z^{k,j})\to\inf \zeta_k$ and condition (\ref{defn:strictfeas}) is satisfied with $\mathcal{A}$ being surjective, we know from~\cite[Theorems 17 $\&$ 18]{rockafellar1974conjugate} that the  two sequences $\{z^{k,j}\}$ and
$\{X^{k,j}\}$ are bounded, and
 $\{X^{k,j}\}$ converges to some point $X^{k,\infty}\in\mathcal{X}$ such that $f_k(X^{k,\infty}) = \inf \zeta_k$.
One can easily prove that
$X^{k,\infty
	}\neq X^k$, because otherwise $X^{k}$ and any accumulation point of $\{z^{k,j}\}$ forms a KKT solution point to problems  (\ref{defn:sdp:opt}) and (\ref{dualSDP:alm}), which would contradict our assumption that $X^k$ is not an optimal solution to problem  (\ref{defn:sdp:opt}). Thus, for all $j$ sufficiently large, $t_j$ is bounded away from $0$. Then,  there exists $\bar{j}\geq 0$ such that for all $j\geq \bar{j}$,  the two inequalities in (\ref{prop:alm:criteria:ineq}) hold.

By using Proposition \ref{prop:feasalm} and (\ref{prop:alm:criteria:ineq}),
 we  get for all $j\geq \bar{j}$ that
$$\begin{array}{rl}
 &\zeta_k(z^{k,j}) - \inf\zeta_k \\[5pt]
 \leq &  \zeta_k(z^{k,j}) - f_k(\Pi_{\mathcal{X}}(X^{k,j}))
\;=\;   (\zeta_k(z^{k,j}) - f_k(X^{k,j})) + (f_k(X^{k,j}) - f_k(\Pi_{\mathcal{X}}(X^{k,j})))\\[5pt]
\leq & t_j  - \langle \nabla f_k(X^{k,j}), \Pi_{\mathcal{X}}(X^{k,j}) - X^{k,j}\rangle + \frac{1}{2}\langle \Pi_{\mathcal{X}}(X^{k,j}) - X^{k,j}, (\mathcal{F}^*\mathcal{F} + c_k^{-1}\mathcal{I})(\Pi_{\mathcal{X}}(X^{k,j}) - X^{k,j})\rangle\\[5pt]
\leq & t_j + \bar{\mu}\|\nabla f_k(X^{k,j})\|(1+\|X^{k,j}\|)\|u^{k,j}\|  + \frac{1}{2}(\lambda_{\max}(\mathcal{F}^*\mathcal{F})  + c_k^{-1})\bar{\mu}^2(1+\|X^{k,j}\|)^2\|u^{k,j}\|^2\\[5pt]
 \leq & t_j + \bar{\mu} t_j + \frac{1}{2}\lambda_{\max}(\mathcal{F}^*\mathcal{F})\bar{\mu}^2t_j + \frac{1}{2}\bar{\mu}^2t_j.
\end{array}
$$
This completes the proof of the proposition.
\end{proof}

%
%
Proposition \ref{prop:criteria} says that if  $\mathcal{A}$ is surjective and condition (\ref{defn:strictfeas}) 
{is} satisfied,
 we can use the following criteria  to replace $(A')$ and $(B')$ with  some $\bar{\nu}>0$:
 \begin{flalign*}
(\widetilde{A}')\quad &
\left\{\begin{array}{ll}
 \zeta_k(z^{k+1}) - f_k(X^{k+1}) \leq  \bar{\nu}^{-1}\varepsilon_k^2/2c_k, \\[2pt]
(1+\|X^{k+1}\|)\|u^{k+1}\|\leq \min\left\{1,\sqrt{c_k}, \sqrt{t_{k,1}}/\|\nabla f_k(X^{k+1})\|\right\}\sqrt{t_{k,1}}\,,
\end{array}\right.
\varepsilon_k\geq 0, \quad \sum_{k=0}^\infty \varepsilon_k <  \infty, &\\[5pt]
(\widetilde{B}')\quad &
\left\{\begin{array}{ll}
 \zeta_k(z^{k+1}) - f_k(X^{k+1}) \leq \bar{\nu}^{-1}(\eta_k^2/2c_k)\|X^{k+1}  - X^k\|^2, \\[2pt]
(1+\|X^{k+1}\|)\|u^{k+1}\|\leq \min\left\{1,\sqrt{c_k}, \sqrt{t_{k,2}}/\|\nabla f_k(X^{k+1})\|\right\}\sqrt{t_{k,2}}\,,
\end{array}\right.
\eta_k\geq 0, \quad \sum_{k=0}^\infty \eta_k <  \infty, &
\end{flalign*}
where $t_{k,1}:=  \bar{\nu}^{-1}\varepsilon_k^2/2c_k$, $t_{k,2}:=  \bar{\nu}^{-1}(\eta_k^2/2c_k)\|X^{k+1}  - X^k\|^2$ and $u^{k+1}: = \mathcal{A}X^{k+1} - b$.
By taking $\bar{\nu}=1+ \bar{\mu} + \frac{1}{2}\lambda_{\max}(\mathcal{F}^*\mathcal{F})\bar{\mu}^2 +\frac{1}{2}\bar{\mu}^2$, we see that criteria $(A')$ and $(B')$ are satisfied as long as both
$(\widetilde{A}')$ and $(\widetilde{B}')$ are true.  Actually, by taking $\bar{\nu}$ to be any positive constant satisfying $(\widetilde{A}')$ and $(\widetilde{B}')$, we  obtain a sequence $\{(z^k, X^k)\}$ that achieves $(A')$ and $(B')$ with $\{\varepsilon_k\}$ and $\{\eta_k\}$ being replaced by $\{\sqrt{\bar{\nu}}\varepsilon_k\}$ and $\{\sqrt{\bar{\nu}}\eta_k\}$, respectively. All the results in Theorem \ref{thm:alm} are valid with these two new sequences $\{\sqrt{\bar{\nu}}\varepsilon_k\}$ and $\{\sqrt{\bar{\nu}}\eta_k\}$.   As far as we know, these implementable criteria of the ALM {are new.}


Note that while the  assumption on the existence of $\widehat{X}$ in (\ref{defn:strictfeas}) is crucial to our analysis in Proposition \ref{prop:criteria}, the assumption on the surjectivity of the linear operator $\mathcal{A}:\mathbb{S}^n\to\mathbb{Y}$
 is not essential  as we can always redefine $\mathbb{Y} = \textup{Range}\,(\mathcal{A})$, to make $\mathcal{A}$ to be surjective from $\mathbb{S}^n$ to $\mathbb{Y}$.  Additionally, it is not difficult to extend
 the results in Proposition \ref{prop:criteria} to the case that  the constraint set $\mathcal{X}$ in (\ref{defn:sdp:opt}) is replaced by $\{X\in\mathbb{S}_+^n\mid \mathcal{A}_1X = b_1,\;b_2-\mathcal{A}_2X\in\mathcal{Q}'\}$ for some convex polyhedral cone $\mathcal{Q}'$,  if one assumes the existence of $\widehat{X}\succ 0$ such that
$\mathcal{A}_1\widehat{X} = b_1$ and $b_2 - \mathcal{A}_2\widehat{X}\in\textup{int}\,(\mathcal{Q}')$.

\section{Concluding discussions}
In this paper, we have 
{established}
asymptotic superlinear convergence results of the ALM  for solving SDP problems with multiple solutions.
These results can be used to explain the success of the solvers SDPNAL, SDPNAL+  and  QSDPNAL for solving linear and convex quadratic SDP problems.
There are several important issues that are worth further investigations.
 For example,
 it would be interesting to study  whether
our results can be extended to other optimization problems with non-polyhedral constraints
 besides SDP problems. Another important  line of research is to further characterize
the  metric subregularity of $\mathcal{T}_{\phi}$ other than the ones stated in Corollary \ref{coro:sdpTphi} or the ``no-gap'' second order sufficient condition.  We also believe that
it is  worth the effort to investigate the metric subregularity of $\mathcal{T}_l$ under weaker conditions than the ones given  in Theorem \ref{thm:mainTl}. Last but not least, one may ask how much the
obtained results can help {in finding ways to} improve the efficiency of the existing solvers for solving SDP problems,
or even better, to obtain new and more efficient solvers for solving general large scale convex optimization problems.
\bigskip

\noindent{\bf Acknowledgements}
The authors would like to thank Drs. Chao Ding and  Xudong Li for  many helpful discussions on the metric subregularity of the Karush-Kuhn-Tucker solution mappings for both nonlinear programming and semidefinite programming problems.

{\small

\bibliographystyle{plain}
}

\end{document}